\newtheorem{theorem}{Theorem}[section]
\newtheorem{proposition}[theorem]{Proposition}
\newtheorem{thm}[theorem]{Theorem}
\newtheorem{lemma}[theorem]{Lemma}
\newtheorem{assumption}[theorem]{Assumption}
\theoremstyle{definition} 
\newtheorem{defn}[theorem]{Definition}
\newtheorem{definition}[theorem]{Definition}
\newtheorem{remark}[theorem]{Remark}
\newtheorem{rmk}[theorem]{Remark}
\newtheorem{conv}[theorem]{Convention}
\newcommand{\cU}{\mathcal{U}}
\newcommand{\qu}{/\kern-.7ex/}
\newcommand{\lqu}{\backslash \kern-.7ex \backslash}
\newcommand{\on}{\operatorname}
\newcommand{\Hom}{\on{Hom}}
\newcommand{\Pic}{\on{Pic}}
\newcommand{\bbox}{\on{Box}} 
\newcommand{\age}{\on{age}}
\newcommand{\HH}{\mathfrak{H}}
\title[relative GW and mirror map for toric CY]{Relative Gromov--Witten Invariants and the Enumerative meaning of Mirror Maps for Toric Calabi--Yau Orbifolds}
\author{Fenglong You}
\address{Department of Mathematical and Statistical Sciences\\ 632 CAB \\ University of Alberta\\Edmonton\\ AB\\ T6G 2G1\\ Canada}
\email{fenglong@ualberta.ca}
\thanks{}
\keywords{}
\begin{document}
\date{\today}

\begin{abstract} 
We provide an enumerative meaning of the mirror maps for toric Calabi--Yau orbifolds in terms of relative Gromov--Witten invariants of the toric compactifications. As a consequence, we obtain an equality between relative Gromov--Witten invariants and open Gromov--Witten invariants. Therefore, the instanton corrected mirrors for toric Calabi--Yau orbifolds can be constructed using relative Gromov--Witten invariants. 
\end{abstract}

\maketitle 

\tableofcontents

\section{Introduction}

\subsection{Overview}

Gromov--Witten theory can be considered as an enumerative theory that "virtually" counts number of curves in smooth projective varieties with prescribed conditions. Gromov--Witten invariants play crucial roles in mirror symmetry. Mirror theorems can usually be stated as a relation between generating functions of genus zero Gromov--Witten invariants and period integrals of the mirrors. Mirror theorems have been proved for many targets since the proof of mirror theorem for quintic threefolds by Givental \cite{Givental96a} and Lian--Liu--Yau \cite{LLY}. Lots of structural properties, such as quantum cohomology and Givental's formalism \cite{Givental04}, have been developed.

On the other hand, relative Gromov--Witten theory provides "virtual" counts of curves with prescribed tangency conditions along a divisor. The foundations of relative Gromov--Witten theory were developed by Li--Ruan \cite{LR}, J. Li \cite{Li} and Ionel--Parker \cite{IP} about two decades ago, but several structural properties and mirror theorems had not been explored until recently. In this section, we give a brief review of some recent developments in relative Gromov--Witten theory. 

While relative Gromov--Witten invariants are difficult to compute and the structural properties were missing, orbifold Gromov--Witten invariants have been studied intensively since Chen--Ruan \cite{CR}, Abramovich--Graber--Vistoli \cite{AGV02}, \cite{AGV} and Tseng \cite{Tseng}. Lots of the structural properties and mirror theorems for Gromov--Witten theory of varieties were generalized to orbifolds. Let $X$ be a smooth projective variety and $D$ be a smooth divisor of $X$. When $r$ is a sufficiently large integer, an equality between genus zero orbifold Gromov--Witten invariants of the $r$-th root stack $X_{D,r}$ and relative Gromov--Witten invariants of $(X,D)$ was proved by Abramovich--Cadman--Wise \cite{ACW}, which generalized an earlier result of Cadman--Chen \cite{CC}. It provides a possibility of using orbifold Gromov--Witten theory to study relative Gromov--Witten theory. We will refer to such relation between relative and orbifold invariants as \emph{relative/orbifold correspondence}. The idea of using stacks to impose tangency conditions goes further back to Cadman \cite{Cadman}. However, a counterexample was provided by D. Maulik in \cite{ACW} which shows that orbifold invariants of the $r$-th root stacks do not stabilize to relative invariants in genus one when $r$ is sufficiently large. 

While the result in \cite{ACW} is inspiring, there are several further questions that need to be answered in order to study relative Gromov--Witten theory from orbifold Gromov--Witten theory. From our perspective, there are three immediate questions.

The first question is to find the relation between relative and orbifold invariants in higher genus. In joint work with H.-H. Tseng \cite{TY18a}, \cite{TY18}, we proved that orbifold invariants of the root stack $X_{D,r}$ are polynomials in $r$ when $r$ is sufficiently large. Furthermore, the constant terms of the polynomials are the corresponding relative invariants. This result indicates that it is possible to study structures of higher genus relative invariants through the structures of higher genus orbifold invariants.

The second question is how to build the structural properties for relative invariants using orbifold invariants. Since relative invariants with positive contact orders only correspond to orbifold invariants with small ages (when $r$ is sufficiently large), a generalization of relative Gromov--Witten theory is required in order to bring structural properties of orbifold Gromov--Witten theory to relative Gromov--Witten theory. In joint work with H. Fan and L. Wu, we introduced genus zero relative invariants with negative contact orders and then proved that they are equal to orbifold invariants of root stacks with large ages markings. It results in several structural properties for relative Gromov--Witten theory such as relative quantum cohomology ring, topological recursion relation (TRR), WDVV equation, Virasoro constraint (genus zero), Givental's formalism etc. An application of WDVV equation for relative Gromov--Witten theory has been worked out in \cite{FW}. In joint work with H. Fan and L. Wu \cite{FWY19}, we studied structures of higher genus relative Gromov--Witten theory and proved that relative Gromov--Witten theory forms a partial cohomological field theory.

The third question is how to obtain a mirror theorem for relative pairs. In joint work with H. Fan and H.-H. Tseng \cite{FTY}, we proved a mirror theorem for root stacks $X_{D,r}$. By taking suitable limits to the $I$-functions for $X_{D,r}$, we obtained the $I$-functions for $(X,D)$, hence a mirror theorem for relative pairs $(X,D)$. Passing from orbifold mirror theorems to relative mirror theorems relies on the genus zero relative/orbifold correspondence in \cite{ACW}, \cite{TY18} and \cite{FWY}.

\subsection{Motivation}

In this paper, we turn our attention to possible connections among relative Gromov--Witten theory, mirror maps, open Gromov--Witten theory and mirror constructions. We will focus on toric Calabi--Yau orbifolds.

For a Calabi--Yau manifold $X$ and its mirror $\check{X}$, the Strominger-Yau-Zaslow (SYZ) conjecture \cite{SYZ} asserts the existence of special Lagrangian torus fibrations $\mu:X\rightarrow B$ and $\check{\mu}:\check{X}\rightarrow B$ which are fiberwise-dual to each other. SYZ mirror symmetry for toric Calabi--Yau manifolds was studied in \cite{CLL} and generalized to toric Calabi--Yau orbifolds in \cite{CCLT}. The SYZ mirror of $X$ needs to be modified by the instanton correction. 

Following Auroux \cite{Auroux07},\cite{Auroux09}, the instanton-corrected mirror of a toric Calabi--Yau orbifold $\mathcal X$ was constructed in \cite{CCLT} where the instanton corrections are given by genus zero open Gromov--Witten invariants which can be considered as virtual counts of holomorphic orbi-disks in $\mathcal X$ bounded by fibers of the Gross fibration. As an application, the SYZ map, defined in \cite{CCLT} in terms of genus zero open Gromov--Witten invariants, is equal to the inverse of the mirror map. Such an enumerative meaning of the inverse of the mirror map was proved earlier in \cite{CLT} for the total space of the canonical bundle of a compact toric Fano variety. Such a relation between disk countings and mirror maps was first observed by Gross--Siebert in \cite[Conjecture 0.2]{GS11} for tropical disks. Furthermore, the generating functions of open Gromov--Witten invariants for toric Calabi--Yau manifolds in \cite{CCLT} were proved to be the same as the slab functions in Gross--Siebert program. The slab functions were originally interpreted as counting tropical disks in \cite{GS14}. A possible alternative approach, originates in \cite{GPS}, via logarithmic Gromov--Witten theory was also sketched in \cite{GS14}. The periods for local Calabi--Yau varieties have also been computed in \cite{RS}.

In this paper, we study an alternative way of counting holomorphic disks using relative Gromov--Witten invariants. More specifically, given a toric Calabi--Yau orbifold $\mathcal X$, we consider the pair $(\bar{\mathcal X},D_\infty)$, where $\bar{\mathcal X}$ is a toric compactification of $\mathcal X$ and $D_\infty:=\bar{\mathcal X}\setminus \mathcal X$ is a toric prime divisor of the compactification $\bar{\mathcal X}$. We consider relative Gromov--Witten invariants of $(\bar{\mathcal X},D_\infty)$ with one relative marking. They can be considered as virtual counts of curves in $\bar{\mathcal X}$ meeting the divisor $D_\infty$ at one point. Hence they can be considered as holomorphic disk counts of $\mathcal X=\bar{\mathcal X}\setminus D_\infty$ which is the complement of the divisor $D_\infty$ in $\bar{\mathcal X}$. The idea of using relative/log Gromov--Witten invariants as an algebro-geometric version of disk countings can be seen in several literatures, see, for example, \cite{GHK}. 

\subsection{Main results}

Let $\beta^\prime\in \pi_2(\mathcal X,L)$ be a basic (orbi-)disk class with Chern--Weil Maslov index $2$, where $L$ is a Lagrangian torus fiber of the moment map of the toric Calabi--Yau orbifold $\mathcal X$. The genus zero open Gromov--Witten invariant
\[
n_{1,l,\beta}^{\mathcal X}([\on{pt}]_L;{\textbf 1}_{v_1},\ldots,{\textbf 1}_{v_l})
\]
is defined in \cite[Definition 3.5]{CCLT}, where $\beta=\beta^\prime+\alpha$ for some $\alpha\in H_2^{\on{eff}}(\mathcal X)$. 

On the other hand, there is a one-to-one correspondence between the basic (orbi-)disk classes and the rays of the stacky fan of the toric Calabi--Yau orbifold $\mathcal X$. The (orbi-)disk class $\beta^\prime$ can be used to construct a toric compactification $\bar{\mathcal X}$ of $\mathcal X$ by adding an extra ray (the negative of the ray corresponding to $\beta^\prime$) to the stacky fan, see Section \ref{sec:compactification} and \cite[Section 6.1]{CCLT} for details. The toric compactification $\bar{\mathcal X}$ depends on the choice of $\beta^\prime$. The compactification introduces an extra toric prime divisor $D_\infty:=\bar{\mathcal X}\setminus \mathcal X$. We consider the following generating functions for genus zero relative Gromov--Witten invariants:
\begin{align}
1+\delta_i^{\on{rel}}:=\sum_{\alpha\in H_2^{\on{eff}}(\mathcal X)}\sum_{l\geq 0}\sum_{v_1,\ldots v_l\in \bbox(\Sigma)^{\age=1}}\frac{\prod_{i=1}^l \tau_{v_i}}{l!}\left\langle [\on{pt}]_{D_\infty},\textbf{1}_{\bar{v}_1},\ldots,\textbf{1}_{\bar{v}_l}\right\rangle_{0,1+l, \bar{\beta}^\prime+\alpha}^{(\bar{\mathcal X},D_\infty)}q^{\alpha},
\end{align}
where $\bar{\beta}^\prime:=\beta^\prime+\beta_\infty\in H_2(\bar{\mathcal X},\mathbb Q)$, $\beta^\prime=\beta_{i}$ is a basic smooth disk class corresponding to the primitive generator $b_i$ of  a ray in the stacky fan $\Sigma$ of $\bar{\mathcal X}$, and $\beta_\infty$ is the basic smooth disk class corresponding to the primitive generator $b_\infty:=-b_i$ of the extra ray;
\begin{align}
\tau_v+\delta_v^{\on{rel}}:=\sum_{\alpha\in H_2^{\on{eff}}(\mathcal X)}\sum_{l\geq 0}\sum_{v_1,\ldots v_l\in \bbox(\Sigma)^{\age=1}}\frac{\prod_{i=1}^l \tau_{v_i}}{l!}\left\langle [\on{pt}]_{D_\infty},\textbf{1}_{\bar{v}_1},\ldots,\textbf{1}_{\bar{v}_l}\right\rangle_{0,1+l, \bar{\beta}^\prime+\alpha}^{(\bar{\mathcal X},D_\infty)}q^{\alpha},
\end{align}
where $\bar{\beta}^\prime:=\beta^\prime+\beta_\infty$ and $\beta^\prime=\beta_{v}$ is a basic orbi-disk class corresponding to a box element $v\in \bbox(\Sigma)^{\age=1}$.

The computation of these invariants relies on the mirror theorem for relative pairs $(\bar{\mathcal X},D_\infty)$. Note that the mirror theorem for a pair $(X,D)$ in \cite{FTY} requires $X$ to be a smooth projective variety and $D$ to be a smooth nef divisor. We need a generalization of the result in \cite{FTY} to orbifold pairs in order to compute relative invariants of the toric orbifolds $(\bar{\mathcal X},D_\infty)$. We briefly sketch the genus zero relative/orbifold correspondence with stacky targets in Appendix \ref{sec:stacky-rel=orb}. In \cite{FTY}, the proof of the mirror theorem for pairs relies on orbifold quantum Lefschetz principle in \cite{Tseng} and the mirror theorem for toric stack bundles in \cite{JTY}. Since orbifold quantum Lefschetz principle can fail for positive orbifold hypersurfaces \cite{CGIJJM} and the mirror theorem in \cite{JTY} requires the toric stack bundles to have a smooth base, the result in Appendix \ref{sec:stacky-rel=orb} does not imply a generalization of the mirror theorem for general orbifold pairs. However, as we are working on the toric Calabi--Yau orbifold $\bar{\mathcal X}$ along with the toric divisor $D_\infty$, the root stack $\bar{\mathcal X}_{D_\infty, r}$ is still a toric orbifold. We can avoid the hypersurface construction in \cite{FTY} and simply apply the mirror theorem for toric stacks in \cite{CCIT}. By taking a suitable limit as in \cite[Section 4]{FTY}, we obtain a mirror theorem for the pair $(\bar{\mathcal X},D_\infty)$, see Theorem \ref{thm:mirror-toric-orbi-pair}.

Using mirror theorem for the pair $(\bar{\mathcal X},D_\infty)$, we obtain explicit formulas for the generating functions of relative invariants of $(\bar{\mathcal X},D_\infty)$.

\begin{theorem}[=Theorem \ref{thm:rel-smooth}]
If $\beta^\prime=\beta_{i_0}$ is a basic smooth disk class corresponding to the primitive generator $b_{i_0}$ of a ray in the stacky fan $\Sigma$ of $\bar{\mathcal X}$, then we have
\begin{align*}
1+\delta_{i_0}^{\on{rel}}:=\exp(-g_{i_0}(y(q,\tau))),
\end{align*}
where $g_{i_0}(y)$ is defined in (\ref{g-j-smooth}); $y=y(q,\tau)$ is the inverse of the toric mirror map of $\mathcal X$ in Proposition \ref{prop:toric-mirror-map}.
\end{theorem}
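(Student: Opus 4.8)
The plan is to use the mirror theorem for the relative pair $(\bar{\mathcal{X}}, D_\infty)$ from Theorem~\ref{thm:mirror-toric-orbi-pair}, which reduces the computation of $1 + \delta_{i_0}^{\on{rel}}$ to an explicit extraction from the relative $I$-function.

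First, I would identify the generating function $1 + \delta_{i_0}^{\on{rel}}$ with a structural component of the relative $J$-function $J^{(\bar{\mathcal{X}}, D_\infty)}$. In the defining sum, the insertions split into one distinguished relative marking carrying the point class $[\on{pt}]_{D_\infty}$ and a variable number of interior markings twisted by box elements $v_j \in \bbox(\Sigma)^{\age=1}$ and weighted by the $\tau_{v_j}$; the latter assemble into the argument $\tau = \sum_v \tau_v \mathbf{1}_v$ of the $J$-function. Since $\bar{\beta}^\prime \cdot D_\infty = 1$, these invariants live in the contact-order-one sector, and I would show that they are precisely the coefficients of the $\bar{\beta}^\prime$-part of the component of $J^{(\bar{\mathcal{X}}, D_\infty)}$ paired against the class dual to $[\on{pt}]_{D_\infty}$.

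Second, I would apply Theorem~\ref{thm:mirror-toric-orbi-pair}: after the mirror map, $J^{(\bar{\mathcal{X}}, D_\infty)}$ agrees with the relative $I$-function, which arises as a limit of the toric $I$-function of the root stack $\bar{\mathcal{X}}_{D_\infty, r}$. As $\bar{\mathcal{X}}_{D_\infty, r}$ is again a toric orbifold, this $I$-function is the explicit hypergeometric series of \cite{CCIT}, so the desired component becomes a closed-form expression in the toric data of $\mathcal{X}$. Extracting the factor attached to the ray $b_{i_0}$---equivalently to the extra ray $b_\infty = -b_{i_0}$ that governs the contact order along $D_\infty$---and taking its logarithm produces $-g_{i_0}$, the series of (\ref{g-j-smooth}). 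Since this component is naturally expressed through the mirror variables, substituting the inverse toric mirror map $y = y(q,\tau)$ of Proposition~\ref{prop:toric-mirror-map} and exponentiating yields the stated identity.

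The main obstacle will be the structural identification in the first step: correctly matching the defined generating function to the relevant component of the relative $J$-function, while handling the dilaton shift, the fact that the distinguished insertion is a point class $[\on{pt}]_{D_\infty}$ rather than a divisor class, and the limiting procedure that produces relative invariants from the root-stack orbifold invariants through the relative/orbifold correspondence of Appendix~\ref{sec:stacky-rel=orb}. Once this dictionary is pinned down, the remaining steps are a direct, if lengthy, manipulation of the explicit toric $I$-function, entirely parallel to the open Gromov--Witten computation of \cite{CCLT}.
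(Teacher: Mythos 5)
Your proposal follows essentially the same route as the paper: identify $1+\delta_{i_0}^{\on{rel}}$ with the $H^0(D_\infty)$-valued part of the $1/z^2$-coefficient of $J_{(\bar{\mathcal X},D_\infty)}$ (using the dimension constraint to isolate degrees $\bar{\beta}'+\alpha$), invoke Theorem \ref{thm:mirror-toric-orbi-pair} to compute this from the explicit toric $I$-function (where it reduces to the single monomial $y^{d_\infty}$), and then convert $y$-variables to $q$-variables. The only cosmetic difference is bookkeeping: in the paper $g_{i_0}$ does not arise as the logarithm of an $I$-function factor but from the $\infty$-component of the relative mirror map, $\log q_\infty=\log y_\infty+g_{i_0}(y)$, together with the observation (Proposition \ref{prop:rel-toric-mirror-map} versus Proposition \ref{prop:toric-mirror-map}) that the remaining components of the pair's mirror map coincide with the toric mirror map of $\mathcal X$, so that $y^{d_\infty}q^{-\bar{\beta}'}=\exp(-g_{i_0}(y))$.
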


\begin{theorem}[=Theorem \ref{thm:rel-orbi}]
If $\beta^\prime=\beta_{v_{j_0}}$ is a basic orbi-disk class corresponding to a box element $v_{j_0}\in \bbox(\Sigma)^{\age=1}$, then we have
\begin{align*}
\tau_{v_{j_0}}+\delta_{v_{j_0}}^{\on{rel}}:= y^{D_{j_0}^\vee}\exp\left(-\sum_{i\not\in I_{j_0}}c_{j_0 i}g_{i}(y(q,\tau))\right),
\end{align*}
where $g_{i}(y)$ is defined in (\ref{g-j-orbi}); $y=y(q,\tau)$ is the inverse of the toric mirror map of $\mathcal X$ in Proposition \ref{prop:toric-mirror-map}; $D_{j_0}^\vee$ is the class defined in (\ref{D-dual}); $I_{j_0}\in \mathcal A$ is the anticone of the minimal cone containing $b_{j_0}=\sum_{i\not\in I_{j_0}}c_{j_0i}b_i$.

\end{theorem}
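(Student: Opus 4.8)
The plan is to adapt the argument for the smooth case (Theorem \ref{thm:rel-smooth}) to the orbi-disk setting, the new ingredient being the book-keeping of the age-one box sector attached to $v_{j_0}$. The backbone is the mirror theorem for the pair $(\bar{\mathcal X}, D_\infty)$ (Theorem \ref{thm:mirror-toric-orbi-pair}), which identifies the relative $J$-function with the relative $I$-function under the inverse toric mirror map $y = y(q,\tau)$ of Proposition \ref{prop:toric-mirror-map}.

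First I would realize the generating series $\tau_{v_{j_0}} + \delta_{v_{j_0}}^{\on{rel}}$ as a component of the relative $J$-function. Setting the bulk parameter to $\sum_{v} \tau_v \mathbf{1}_{\bar v}$ over $v \in \bbox(\Sigma)^{\age=1}$, the invariants in its definition are packaged, after summing over $l$ and over $\alpha \in H_2^{\on{eff}}(\mathcal X)$, as the coefficient of the fundamental class $\mathbf{1}_{D_\infty}$ (the relative-Poincar\'e dual of $[\on{pt}]_{D_\infty}$) in $J^{(\bar{\mathcal X},D_\infty)}$ along the curve classes $\bar{\beta}^\prime + \alpha$, read off at order $z^{-1}$. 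By Theorem \ref{thm:mirror-toric-orbi-pair} this coefficient equals the corresponding coefficient of the relative $I$-function after the substitution $y = y(q,\tau)$.

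It then remains to compute this $I$-function coefficient. The relative $I$-function is obtained as the $r \to \infty$ limit of the $I$-function of the root stack $\bar{\mathcal X}_{D_\infty, r}$, whose explicit hypergeometric form is furnished by the toric mirror theorem of \cite{CCIT}. The summand attached to the orbi-disk class $\bar{\beta}^\prime$ lands in the $v_{j_0}$-twisted sector and contributes the classical monomial $y^{D_{j_0}^\vee}$, encoding the age-one shift prescribed by the box element through the class defined in (\ref{D-dual}); the remaining sum over effective classes $\alpha$ exponentiates. To pin down the exponent I would invoke the decomposition $b_{j_0} = \sum_{i \notin I_{j_0}} c_{j_0 i} b_i$ over the primitive generators of the minimal cone containing $b_{j_0}$: each $b_i$ with $i \notin I_{j_0}$ contributes the mirror-map component $g_i(y)$ of (\ref{g-j-orbi}) weighted by $c_{j_0 i}$, assembling the factor $\exp(-\sum_{i \notin I_{j_0}} c_{j_0 i} g_i(y))$.

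The main obstacle is this final factorization. One must verify that the $v_{j_0}$-component of the $I$-function separates cleanly into the classical monomial $y^{D_{j_0}^\vee}$ and the exponentiated corrections, that the fractional intersection data of $\beta_{v_{j_0}}$ with $D_\infty$ yield precisely the weights $c_{j_0 i}$, and that no spurious contributions survive from the other age-one sectors in the $r \to \infty$ limit. Controlling these orbifold contributions, rather than the formal mirror-theorem input, is where the essential work lies.
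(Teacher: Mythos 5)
Your high-level skeleton (invoke Theorem \ref{thm:mirror-toric-orbi-pair}, realize the generating series inside $J_{(\bar{\mathcal X},D_\infty)}$, and compute the matching coefficient of $I_{(\bar{\mathcal X},D_\infty)}$) is the same as the paper's, but the mechanism you sketch for producing the formula is not how the terms actually arise, and the step you defer as ``the essential work'' is precisely the step your sketch gets wrong. First, a bookkeeping point: with no descendant insertions the invariants sit in the $1/z^2$-coefficient of the $J$-function (the insertion $\phi_\alpha/(z(z-\bar\psi))$ starts at $1/z^2$), not the $1/z$-coefficient, and one must take the part valued in $H^0(D_\infty)$, i.e.\ the \emph{untwisted} sector $v(d)=0$. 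More seriously: you claim the monomial $y^{D_{j_0}^\vee}$ comes from the $\bar{\beta}^\prime$-summand of the $I$-function ``landing in the $v_{j_0}$-twisted sector,'' and that the exponential factor comes from the remaining sum over $\alpha$ exponentiating inside the $I$-function. Neither happens. In the paper's computation, the $H^0(D_\infty)$-valued $1/z^2$-coefficient of $I_{(\bar{\mathcal X},D_\infty)}$ is a \emph{single monomial} $y^{d_\infty}$, $d_\infty=e_{j_0}+e_\infty$ (a case analysis on the hypergeometric factors shows every other $d$ either forces a factor of $\bar D_i$, lands in a twisted sector, or violates effectivity); there is no sum over $\alpha$ on the $I$-side to exponentiate, and the twisted-sector summand attached to $\bar{\beta}^\prime$ (whose intersection numbers $D_i\cdot\bar{\beta}^\prime=c_{j_0i}$ are fractional) is discarded by the $H^0(D_\infty)$ projection.

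The exponential and the monomial $y^{D_{j_0}^\vee}$ instead emerge from inverting the mirror map, which your proposal never computes. Concretely, Proposition \ref{prop:rel-inv} gives $y^{d_\infty}=\bigl(\tau_{v_{j_0}}+\delta_{v_{j_0}}^{\on{rel}}\bigr)\,q^{\bar{\beta}^\prime}$, so one must evaluate $y^{d_\infty}q^{-\bar{\beta}^\prime}$. This requires: (a) the lattice identity $d_\infty-\bar{\beta}^\prime=D_{j_0}^\vee$, which follows from $\bar{\beta}^\prime=\sum_{i\notin I_{j_0}}c_{j_0i}e_i+e_\infty$ and the defining property (\ref{D-dual}); (b) the computation of the relative mirror map (Proposition \ref{prop:rel-toric-mirror-map}), in particular that in the orbi-disk case $\log q_\infty=\log y_\infty$ with no correction, because $m_{i\infty}=0$ for all $i\in\{0,\ldots,m-1\}$, and that on the remaining variables it agrees with the mirror map of $\mathcal X$ (Proposition \ref{prop:toric-mirror-map}); and (c) the evaluation $\log q^{\bar{\beta}^\prime}-\log y^{\bar{\beta}^\prime}=\sum_{i\notin I_{j_0}}c_{j_0i}\,g_i(y)$, using $\bar D_i\cdot\bar{\beta}^\prime=c_{j_0i}$, where these $g_i$ (indices of rays, $i\le m-1$) are the functions (\ref{g-j-smooth}), not (\ref{g-j-orbi}). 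Without (a)--(c) the factorization you flag as the main obstacle cannot be carried out, and the route you propose for it (extracting it from twisted-sector and $\alpha$-summands of the $I$-function) would fail.
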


The above formulas for relative potentials coincide with the formulas for open potentials. As a consequence, we have the following equality between open invariants and relative invariants. Hence, these two ways of counting disks coincide. 

\begin{theorem}[=Theorem \ref{thm:open=rel}]
The following open invariants and relative invariants are equal:
\[
n_{1,l,\beta^\prime+\alpha}^{\mathcal X}( [\on{pt}]_L,\textbf{1}_{v_1},\ldots,\textbf{1}_{v_l})=\left\langle [\on{pt}]_{D_\infty},\textbf{1}_{\bar{v}_1},\ldots,\textbf{1}_{\bar{v}_l}\right\rangle_{0,1+l, \bar{\beta}^\prime+\alpha}^{(\bar{\mathcal X},D_\infty)},
\]
where $\bar{\beta}^\prime=\beta^\prime+\beta_\infty$ and $\beta^\prime\in \pi_2(\mathcal X,L)$ is either a smooth disk class or an orbi-disk class.
\end{theorem}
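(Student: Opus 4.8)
The plan is to deduce the final equality between open and relative invariants by comparing the two previously established closed-form evaluations. The key observation is that the open Gromov--Witten generating functions $n_{1,l,\beta}^{\mathcal X}$ organized as in \cite{CCLT} satisfy exactly the same exponential/mirror-map formulas that Theorem \ref{thm:rel-smooth} and Theorem \ref{thm:rel-orbi} establish for the relative potentials $1+\delta_{i_0}^{\on{rel}}$ and $\tau_{v_{j_0}}+\delta_{v_{j_0}}^{\on{rel}}$. So rather than matching the two sides invariant-by-invariant directly, I would match their full generating functions and then extract the coefficient-wise equality.

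First I would recall from \cite{CCLT} the closed formula for the open potential: the SYZ map expresses the generating function of $n_{1,l,\beta^\prime+\alpha}^{\mathcal X}([\on{pt}]_L;\textbf{1}_{v_1},\ldots,\textbf{1}_{v_l})$, summed against $\prod \tau_{v_i}/l!$ and $q^\alpha$, in terms of the \emph{same} functions $g_{i}(y)$ and the \emph{same} inverse toric mirror map $y=y(q,\tau)$ of Proposition \ref{prop:toric-mirror-map}. Concretely, in the smooth-disk case the open generating function equals $\exp(-g_{i_0}(y(q,\tau)))$, and in the orbi-disk case it equals $y^{D_{j_0}^\vee}\exp(-\sum_{i\notin I_{j_0}}c_{j_0 i}g_i(y(q,\tau)))$. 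These are literally the right-hand sides appearing in the two relative theorems. I would make precise that the definition of $n_{1,l,\beta}^{\mathcal X}$ in \cite[Definition 3.5]{CCLT} packages the open invariants into exactly the same generating function structure (same summation ranges over $\alpha\in H_2^{\on{eff}}(\mathcal X)$ and $v_i\in\bbox(\Sigma)^{\age=1}$, same combinatorial weights) as our relative generating functions $1+\delta_{i_0}^{\on{rel}}$ and $\tau_{v_{j_0}}+\delta_{v_{j_0}}^{\on{rel}}$.

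The decisive step is then purely formal: two formal power series in the variables $q^\alpha$ and $\tau_v$ that are equal as functions must have equal coefficients, because the exponential basis $\{q^\alpha \prod \tau_{v_i}\}$ is linearly independent over $\Q$. Since both the open and relative generating functions are expressed through the identical analytic data $g_i$, $y(q,\tau)$, $D_{j_0}^\vee$, and $c_{j_0 i}$, the two generating functions coincide term by term. Extracting the coefficient of $q^\alpha \prod_{i=1}^{l}\tau_{v_i}/l!$ on both sides yields precisely
\[
n_{1,l,\beta^\prime+\alpha}^{\mathcal X}([\on{pt}]_L,\textbf{1}_{v_1},\ldots,\textbf{1}_{v_l})=\left\langle [\on{pt}]_{D_\infty},\textbf{1}_{\bar{v}_1},\ldots,\textbf{1}_{\bar{v}_l}\right\rangle_{0,1+l,\bar{\beta}^\prime+\alpha}^{(\bar{\mathcal X},D_\infty)},
\]
covering both the smooth and orbifold cases simultaneously.

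The main obstacle I anticipate is \emph{not} the final coefficient extraction but the verification in the previous step that the open and relative sides are genuinely indexed by the same data after the compactification. Specifically, one must check that the one-to-one correspondence between basic (orbi-)disk classes and rays/box elements lines up the class $\beta^\prime+\alpha$ on the open side with $\bar{\beta}^\prime+\alpha$ on the relative side under the prescription $\bar{\beta}^\prime=\beta^\prime+\beta_\infty$, and that the insertion $[\on{pt}]_L$ at the Lagrangian boundary corresponds correctly to the relative point insertion $[\on{pt}]_{D_\infty}$ along $D_\infty$ with the age-one interior markings $\textbf{1}_{\bar{v}_i}$ matching $\textbf{1}_{v_i}$. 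This bookkeeping is where the genus zero relative/orbifold correspondence of Appendix \ref{sec:stacky-rel=orb} and the explicit form of the toric mirror map must be invoked carefully; once the indexing is confirmed to match, the equality of invariants follows immediately from the two closed formulas and the linear independence of the generating-function monomials.
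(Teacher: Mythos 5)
Your proposal is correct and is essentially the paper's own argument: the paper likewise deduces Theorem \ref{thm:open=rel} as a direct consequence of the relative formulas (Theorems \ref{thm:rel-smooth} and \ref{thm:rel-orbi}) and the open formulas from \cite{CCLT} (Theorems \ref{thm:open-smooth} and \ref{thm:open-orbi}), since both pairs of generating functions --- built with the same summation ranges, weights, and the correspondence $\beta^\prime \leftrightarrow \bar{\mathcal X}$, $\bar{\beta}^\prime=\beta^\prime+\beta_\infty$ --- equal the identical closed expressions in $g_i$ and $y(q,\tau)$, whence coefficient-wise equality of the invariants.
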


Therefore, the SYZ mirror of the toric Calabi--Yau orbifold $\mathcal X$ can be constructed using relative Gromov--Witten invariants instead of open Gromov--Witten invariants.

\begin{theorem}[=Theorem \ref{thm:SYZ-mirror}]
Let $\mathcal X$ be a toric Calabi--Yau orbifold equipped with the Gross fibration, the SYZ mirror of $\mathcal X$ (with a hypersurface removed) is the family of non-compact Calabi--Yau
\[
\check{\mathcal X}_{q,\tau}=\{(u,v,z_1,\ldots,z_{n-1})\in \mathbb C^2\times (\mathbb C^\times)^{n-1}| uv=G_{(q,\tau)}(z_1,\ldots,z_{n-1})\},
\]
where
\[
G_{(q,\tau)}(z_1,\ldots,z_{n-1})=\sum_{i=0}^{m-1}C_i(1+\delta_i^{\on{rel}})z^{b_i}+\sum_{j=m}^{m^\prime-1}C_{v_j}(\tau_{v_j}+\delta_{v_j}^{\on{rel}})z^{v_j},
\]

The SYZ mirror of $\mathcal X$ without removing a hypersurface is given by a Landau-Ginzburg model $(\check{\mathcal X},W)$, where $W:\check{\mathcal X}\rightarrow \mathbb C$ is the holomorphic function $W:=u$.
\end{theorem}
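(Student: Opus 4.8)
The plan is to assemble the SYZ mirror from the pieces already established earlier in the paper, treating the theorem as a synthesis rather than an independent computation. The starting point is the construction in \cite{CCLT}, where the instanton-corrected SYZ mirror of $\mathcal X$ is given by a family of non-compact Calabi--Yau hypersurfaces in $\mathbb C^2\times(\mathbb C^\times)^{n-1}$ of the form $uv=G(z_1,\ldots,z_{n-1})$, with the superpotential $G$ expressed in terms of the generating functions of genus zero open Gromov--Witten invariants, namely the open potentials $1+\delta_i^{\on{open}}$ and $\tau_{v_j}+\delta_{v_j}^{\on{open}}$ attached to the smooth disk classes $\beta_i$ and the orbi-disk classes $\beta_{v_j}$. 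My task is therefore to substitute the relative generating functions for the open ones in this formula, and the justification for doing so is exactly Theorem \ref{thm:open=rel}.

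First I would recall the precise form of the SYZ mirror superpotential as constructed in \cite{CCLT}, identifying the coefficients $C_i$, $C_{v_j}$ and the monomials $z^{b_i}$, $z^{v_j}$ that encode the rays and box elements of the stacky fan $\Sigma$. Next I would invoke Theorem \ref{thm:open=rel}, which asserts the equality of the open invariants $n_{1,l,\beta'+\alpha}^{\mathcal X}$ with the corresponding relative invariants $\langle[\on{pt}]_{D_\infty},\dots\rangle^{(\bar{\mathcal X},D_\infty)}$ for every smooth or orbi-disk class $\beta'$. Summing these equalities against the same combinatorial weights $\prod_i\tau_{v_i}/l!\,q^\alpha$ that define the generating functions shows term by term that the open potential $1+\delta_i^{\on{open}}$ equals the relative potential $1+\delta_i^{\on{rel}}$, and likewise $\tau_{v_j}+\delta_{v_j}^{\on{open}}=\tau_{v_j}+\delta_{v_j}^{\on{rel}}$. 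Substituting these identities into the superpotential $G$ of \cite{CCLT} yields exactly the function $G_{(q,\tau)}$ displayed in the statement, establishing that the hypersurface family $\check{\mathcal X}_{q,\tau}=\{uv=G_{(q,\tau)}\}$ is the SYZ mirror with a hypersurface removed.

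For the final assertion, that the SYZ mirror without removing a hypersurface is the Landau--Ginzburg model $(\check{\mathcal X},W)$ with $W=u$, I would again point to the corresponding construction in \cite{CCLT}: there the toric Calabi--Yau orbifold $\mathcal X$ is related to $\check{\mathcal X}$ by the Gross fibration, and the proper superpotential that controls the un-punctured mirror is the coordinate $u$ restricted to the total space. Since the only modification I am making to the \cite{CCLT} construction is the replacement of open by relative generating functions, and this replacement is an equality of the defining data rather than a change of geometry, the Landau--Ginzburg description transfers verbatim.

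The main obstacle I anticipate is not analytic but bookkeeping: one must ensure that the indexing conventions for the disk classes and box elements used in \cite{CCLT} match those fixed in the present paper, so that the coefficients $C_i$, $C_{v_j}$ and the exponents $b_i$, $v_j$ correspond correctly under the substitution, and that the ranges $0\le i\le m-1$ and $m\le j\le m'-1$ in $G_{(q,\tau)}$ reproduce exactly the smooth rays and the age-one box elements of $\Sigma$. Once these conventions are aligned, the theorem follows immediately from Theorem \ref{thm:open=rel} and the \cite{CCLT} mirror construction, with no further enumerative input required.
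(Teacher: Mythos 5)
Your proposal is correct and follows essentially the same route as the paper: the paper's own argument is precisely to recall the instanton-corrected SYZ mirror of \cite{CCLT} (written with the open potentials $1+\delta_i^{\on{open}}$, $\tau_{v_j}+\delta_{v_j}^{\on{open}}$) and then substitute the generating-function equalities of Theorem \ref{thm:open=rel}, which the paper already states at the level of potentials, so even your term-by-term resummation step is subsumed. The Landau--Ginzburg statement with $W=u$ likewise transfers verbatim from \cite{CCLT}, exactly as you say.
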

It was proved in \cite{Lau} that Gross--Siebert's slab functions and generating functions for open invariants coincide for toric Calabi--Yau manifolds. The enumerative meaning of the slab functions is given in term of tropical disks in \cite{GS14}. Hence, we conclude that the three ways (tropical, symplectic and algebraic) of counting disks coincide.

The rest of the paper is organized as follows. In Section \ref{sec:rel-GW}, we give a brief review of the definition of relative Gromov--Witten invariants, the genus zero relative/orbifold correspondence and the mirror theorem for relative pairs. In Section \ref{sec:toric}, we first give a brief review of the definition of toric orbifolds and mirror theorem for toric orbifolds. We also state mirror theorem for orbifold toric pairs which will be used to compute relative invariants in Section \ref{sec:compute-rel-inv}. In Section \ref{sec:toric}, we also give a brief review of open Gromov--Witten invariants of toric orbifolds and toric compactifications of toric Calabi--Yau orbifolds. In Section \ref{sec:compute-rel-inv}, we compute relative invariants of toric compatifications of toric Calabi--Yau orbifolds and prove that relative invariants provide an enumerative meaning of the inverse of the mirror map. As consequences, we show that open invariants are equal to relative invariants; and relative invariants can be used to construct the instanton corrected mirror of toric Calabi--Yau orbifolds. In Appendix \ref{sec:stacky-rel=orb}, we briefly sketch a proof of the genus zero relative/orbifold correspondence with stacky targets.

\section*{Acknowledgment}
The author is very grateful to Hsian-Hua Tseng for related collaborations and for his explanations of his previous work \cite{CCLT}. The author would also like to thank Denis Auroux, Honglu Fan, Michel Van Garrel, Yu-Shen Lin, Melissa Liu and Jingyu Zhao for related discussions. This project is supported by the postdoctoral fellowship of NSERC and Department of Mathematical Sciences at the University of Alberta.

\section{Relative Gromov--Witten theory}\label{sec:rel-GW}

\subsection{Definition}

We begin with a brief review of the definition of relative Gromov--Witten invariants. We refer to \cite{LR}, \cite{Li01}, \cite{Li}, and \cite{IP} for more details about the construction of relative Gromov--Witten theory. 

Let $X$ be a smooth projective variety and $D$ be a smooth divisor. 
For $d\in H_2(X,\mathbb Q)$, we consider a partition $\vec k=(k_1,\ldots,k_m)$ of $\int_d[D]$. That is,
\[
\sum_{i=1}^m k_i=\int_d[D], \quad k_i\in \mathbb Z_{>0}, \text{ for } i\in \{1,\ldots, m\}.
\] 
We consider the moduli space $\overline{M}_{g,\vec k,n,d}(X,D)$ of $(m+n)$-pointed, genus $g$, degree $d\in H_2(X,\mathbb Q)$, relative stable maps to $(X,D)$ such that the first $m$ marked points are relative marked points whose contact orders with the divisor $D$ are given by the partition $\vec k$ and the last $n$ marked points are interior marked points. There are two types of evaluation maps corresponding to two types of markings. Let $\on{ev}_i$ be the $i$-th evaluation map, then
\begin{align*}
\on{ev}_i: \overline{M}_{g,\vec k,n,d}(X,D) \rightarrow D, & \quad\text{for } 1\leq i\leq m;\\
\on{ev}_i: \overline{M}_{g,\vec k,n,d}(X,D) \rightarrow X, & \quad \text{for } m+1\leq i\leq m+n.
\end{align*}

There is a stabilization map $s:\overline{M}_{g,\vec k,n,d}(X,D)\rightarrow \overline{M}_{g,m+n,d}(X)$. Let $\bar{\psi}_i=s^*(\psi_i)$ be the descendant class at the $i$-th marked point which is the pullback of the descendant class from $\overline{M}_{g,m+n,d}(X)$.
Consider
\begin{itemize}
\item $\delta_{i}\in H^*(D,\mathbb Q)$, for $1\leq i\leq m$.
\item $\gamma_{m+i}\in H^*(X,\mathbb Q)$, for $1\leq i\leq n$.
\item  $a_{i}\in \mathbb Z_{\geq 0}$, for $1\leq i\leq m+n$.
\end{itemize}
Relative Gromov--Witten invariants of $(X,D)$ are defined as
\begin{align}\label{relative-invariant-higher-dimension}
&\left\langle \prod_{i=1}^m\tau_{a_{i}}(\delta_{i})\left|\prod_{i=1}^n \tau_{a_{m+i}}(\gamma_{m+i})\right.\right\rangle^{(X,D)}_{g,\vec k,n,d}:=\\
\notag &\int_{[\overline{M}_{g,\vec k,n,d}(X,D)]^{vir}}\bar{\psi}_1^{a_1}\on{ev}^*_{1}(\delta_{1})\cdots \bar{\psi}_m^{a_m}\on{ev}^*_{m}(\delta_{m})\bar{\psi}_{m+1}^{a_{m+1}}\on{ev}^{*}_{m+1}(\gamma_{m+1})\cdots\bar{\psi}_{m+n}^{a_{m+n}}\on{ev}^{*}_{m+n}(\gamma_{m+n}).
\end{align}
 When $\mathcal X$ is a smooth proper Deligne--Mumford stack and $\mathcal D\subset \mathcal X$ is a smooth irreducible divisor, relative Gromov--Witten invariants of orbifold pairs $(\mathcal X,\mathcal D)$ can be defined in a similar way.

\subsection{Relative/orbifold correspondence}

The invariants that are naturally related to the relative invariants of $(X,D)$ are the orbifold invariants of the $r$-th root stack $X_{D,r}$ for a sufficiently large integer $r$. The relationship between them has been studied in \cite{ACW},\cite{TY18a}, \cite{TY18} and \cite{FWY}.

The evaluation maps for orbifold Gromov-Witten invariants land on the inertia stack of the target orbifold. The coarse moduli space of the inertia stack of the root stack $X_{D,r}$ can be decomposed into disjoint union of $r$ components
\[
\underline{I}(X_{D,r})=X\sqcup_{i=1}^{r-1} D,
\]
where there are $r-1$ components, called twisted sectors, isomorphic to $D$. The twisted sectors $\mathcal D_r$ of the inertial stack $IX_{D,r}$ are $\mu_r$-gerbes over $D$. Twisted sectors are labeled by rational numbers called ages. 

The relative conditions can be translated to orbifold conditions as follows, we consider the moduli space $\overline{M}_{g,\vec k,n,d}(X_{D,r})$ of $(m+n)$-pointed, genus g, degree $d$, orbifold stable maps to $X_{D,r}$ whose orbifold conditions are given by the partition $\vec k$ as follows.
\begin{itemize}
\item for $1\leq i\leq m$, the coarse evaluation map $\on{ev}_i$ at the $i$-th marked point lands on the twisted sector $D$ with age $k_i/r$. These marked points are orbifold marked points.
\item the coarse evaluation maps $\on{ev}_i$ at the last $n$ marked points all land on the identity component $X$ of the coarse moduli space of the inertia stack $I(X_{D,r})$. These marked points are interior marked points.
\end{itemize}

Orbifold Gromov--Witten invariants of $X_{D,r}$ are defined as

\begin{align}\label{orbifold-invariant-higher-dimension}
&\left\langle \prod_{i=1}^m\tau_{a_{i}}(\delta_{i})\prod_{i=1}^n \tau_{a_{m+i}}(\gamma_{m+i})\right\rangle^{X_{D,r}}_{g,\vec k,n,d}:=\\
\notag &\int_{[\overline{M}_{g,\vec k,n,d}(X_{D,r})]^{vir}}\bar{\psi}_1^{a_1}\on{ev}^*_{1}(\delta_{1})\cdots \bar{\psi}_m^{a_m}\on{ev}^*_{m}(\delta_{m})\bar{\psi}_{m+1}^{a_{m+1}}\on{ev}^{*}_{m+1}(\gamma_{m+1})\cdots\bar{\psi}_{m+n}^{a_{m+n}}\on{ev}^{*}_{m+n}(\gamma_{m+n}),
\end{align}
where the descendant class $\bar{\psi}_i$ is the class pullback from the corresponding descendant class on the moduli space $\overline{M}_{g,m+n,d}(X)$ of stable maps to $X$.

We refer to \cite{Abramovich}, \cite{AGV02}, \cite{AGV}, \cite{CR} and \cite{Tseng} for more details about orbifold Gromov--Witten invariants.

\begin{remark}
We will use the notation 
\[
\left\langle \delta_1\bar{\psi}^{a_1},\ldots, \delta_m\bar{\psi}^{a_m}, \gamma_{m+1}\bar{\psi}^{a_{m+1}},\ldots, \gamma_{m+n}\bar{\psi}^{a_{m+n}}\right\rangle^{X_{D,r}}_{g,m+n,d}
\]
instead of the notation in (\ref{orbifold-invariant-higher-dimension}) to denote orbifold invariants of $X_{D,r}$ when orbifold conditions are clear from the context. Similarly for relative invariants of $(X,D)$, we may use 
\[
\left\langle \delta_1\bar{\psi}^{a_1},\ldots, \delta_m\bar{\psi}^{a_m}, \gamma_{m+1}\bar{\psi}^{a_{m+1}},\ldots, \gamma_{m+n}\bar{\psi}^{a_{m+n}}\right\rangle^{(X,D)}_{g,m+n,d}
\] 
instead of the notation in (\ref{relative-invariant-higher-dimension}) if relative conditions are clear. 
\end{remark}

The relationship between genus zero relative invariants of $(X,D)$ and orbifold invariants of $X_{D,r}$ is proved in \cite{ACW}. The relationship between higher genus relative and orbifold invariants is proved in \cite{TY18a} and \cite{TY18}. For the purpose of this paper, we only state the genus zero case. 

\begin{theorem}[\cite{ACW}, Theorem 1.2.1; \cite{TY18}, Theorem 5.1]\label{thm:rel=orb}
Let $X$ be a smooth projective variety and $D$ a smooth divisor. For a sufficiently large $r$, genus zero relative and orbifold invariants coincide:
\[
\left\langle \prod_{i=1}^m\tau_{a_{i}}(\delta_{i})\left|\prod_{i=1}^n \tau_{a_{m+i}}(\gamma_{m+i})\right.\right\rangle^{(X,D)}_{0,\vec k,n,d}=\left\langle \prod_{i=1}^m\tau_{a_{i}}(\delta_{i})\prod_{i=1}^n \tau_{a_{m+i}}(\gamma_{m+i})\right\rangle^{X_{D,r}}_{0,\vec k,n,d}.
\]
\end{theorem}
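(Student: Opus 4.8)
The plan is to compare the two moduli spaces appearing in the statement --- $\overline{M}_{0,\vec k,n,d}(X,D)$ of relative stable maps and $\overline{M}_{0,\vec k,n,d}(X_{D,r})$ of orbifold stable maps to the root stack --- by producing a morphism between them that intertwines the evaluation maps and the descendant classes, and then showing that the two virtual fundamental classes agree under this comparison. The conceptual input is the local dictionary near $D$: at a marked point of age $k_i/r$ on the root-stack side, the tautological $r$-th root of $\mathcal{O}_X(D)$ pulls back, read on an orbifold chart, to a section vanishing to order $k_i$, so that the coarse map acquires contact of order $k_i$ with $D$. For this dictionary to be unambiguous one needs $k_i<r$ for every part of $\vec k$; this is the first, and mildest, place where the hypothesis that $r$ is sufficiently large enters.

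First I would fix the \'etale-local model $[\mathbb{A}^1\times U\,/\,\mu_r]$ of $X_{D,r}$ along $D$, with $U\subset D$ and $\mu_r$ acting on the $r$-th root coordinate, and verify that an orbifold stable map carrying a $\mu_r$-marked point of the prescribed age is, after passing to coarse spaces and recording the expansion bookkeeping along $D$, exactly a relative stable map of contact order $k_i$. The crucial geometric point --- and the one that confines us to genus zero --- is that for $r$ sufficiently large the contributions of domain components mapping into the $\mu_r$-gerbe over $D$, which in higher genus produce the $r$-dependent discrepancy exhibited by D. Maulik's counterexample in \cite{ACW}, are virtually trivial for rational domains. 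This is what forces the restriction to genus zero here.

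With the local analysis in hand I would construct the comparison morphism $\overline{M}_{0,\vec k,n,d}(X_{D,r})\to\overline{M}_{0,\vec k,n,d}(X,D)$ by taking coarse maps together with their transverse contact data, and check that it is an isomorphism over the locus of maps meeting $D$ transversally and, more generally, a bijection at the level of expanded degenerations once $r$ is large. Under this morphism the relative evaluation on the root-stack side, which lands in the twisted sector $\cong D$, matches the relative evaluation to $D$; the interior evaluations land in $X$ on both sides; and the descendant classes $\bar\psi_i$ are, by construction, pulled back from the common space $\overline{M}_{0,m+n,d}(X)$ along the stabilization map, so the cohomological insertions in the two brackets literally coincide. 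Hence the integrands agree once the underlying cycles do.

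The hard part will be the comparison of virtual fundamental classes, which is the heart of the argument. The perfect obstruction theory for orbifold maps to $X_{D,r}$ is governed by the pullback of the tangent complex of the root stack, whereas the relative theory is governed by the log tangent complex of $(X,D)$; these differ by contributions supported along the divisor and coming from the $r$-th root structure, and one must show that for $r$ sufficiently large this discrepancy is a bundle of the expected rank whose contribution to the virtual class is trivial, so that the two classes push forward to the same cycle. Controlling the relevant $H^1$ of the normal and obstruction bundles, and establishing the genus-zero vanishing that makes the correction term disappear, is where the real work lies. One can also read the polynomiality-in-$r$ theorem of \cite{TY18} as an alternative route to the same conclusion: the genus-zero relative invariant is the constant term of a polynomial in $r$ that, in genus zero, is already constant for $r$ large.
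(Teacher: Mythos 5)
Your proposal has a genuine gap, and it sits exactly where you yourself locate ``the heart of the argument.'' Moreover, the scaffolding you build around that gap is not sound. The comparison morphism $\overline{M}_{0,\vec k,n,d}(X_{D,r})\to \overline{M}_{0,\vec k,n,d}(X,D)$ that you propose to define by ``taking coarse maps together with their transverse contact data'' does not exist: an orbifold stable map to $X_{D,r}$ may have irreducible components landing entirely inside the $\mu_r$-gerbe over $D$, and the coarse map of such a configuration is not a relative stable map for any assignment of contact orders --- on the relative side these configurations are replaced by maps to expanded degenerations of $(X,D)$, which have no counterpart in the moduli of maps to the root stack. So the two moduli spaces differ precisely on the loci carrying the interesting virtual geometry, and there is no bijection ``at the level of expanded degenerations'' for any $r$. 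This is why \cite{ACW} do not construct such a morphism either, but instead relate the two obstruction theories through auxiliary moduli of maps to expansions of the root stack. Your claim that components mapping into the gerbe are ``virtually trivial for rational domains'' assumes the conclusion: those loci contribute nontrivially on both sides (through expansions on one side, through the gerbe on the other), and the theorem is precisely the statement that the contributions match. Since you then explicitly defer the virtual-class comparison (``where the real work lies''), what remains verified in your proposal --- matching of evaluation maps, of the classes $\bar\psi_i$, of the transversal locus, and the requirement $k_i<r$ --- is only the easy, standard part, and the final appeal to the polynomiality theorem of \cite{TY18} is not an argument but a citation of the result being proved.

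For contrast, the proof this paper endorses (sketched in Appendix \ref{sec:stacky-rel=orb}, following \cite{TY18}) avoids any direct comparison of the two moduli spaces. One degenerates $X$ to the normal cone of $D$ and applies the degeneration formula on both sides: orbifold invariants of $X_{D,r}$ decompose into relative invariants of $(X,D)$ paired with invariants of the local model $(\mathcal Y_{\mathcal D_0,r},\mathcal D_\infty)$, where $\mathcal Y=\mathbb{P}(\mathcal O\oplus \mathcal N_{D})$, while relative invariants of $(X,D)$ decompose into the same $(X,D)$-factors paired with invariants of $(\mathcal Y,\mathcal D_0\cup\mathcal D_\infty)$. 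Both sums run over the same splitting data along $D$, so the theorem reduces to comparing the two local models, and that comparison is done by $\mathbb{C}^*$-virtual localization: the divisor contributions you are worried about are organized into vertex and edge terms of decorated graphs and matched term by term. If you want to salvage a direct-comparison argument instead, you would essentially need to reproduce the machinery of \cite{ACW}; the degeneration-plus-localization route is what makes the problem tractable.
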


Since we will study relative invariants of toric compactification of toric Calabi--Yau orbifolds, we will need to generalize the result of Theorem \ref{thm:rel=orb} to the case when $X$ is a smooth Deligne--Mumford stack. 

\begin{thm}\label{thm:stacky-rel=orb}
Let $\mathcal X$ be a smooth proper Deligne--Mumford stack and $\mathcal D\subset \mathcal X$ a smooth irreducible divisor. Genus zero relative and orbifold invariants coincide when $r$ is sufficiently large. That is 
\[
\left\langle \prod_{i=1}^m\tau_{a_{i}}(\delta_{i})\left|\prod_{i=1}^n \tau_{a_{m+i}}(\gamma_{m+i})\right.\right\rangle^{(\mathcal X,\mathcal D)}_{0,\vec k,n,d}=\left\langle \prod_{i=1}^m\tau_{a_{i}}(\delta_{i})\prod_{i=1}^n \tau_{a_{m+i}}(\gamma_{m+i})\right\rangle^{\mathcal X_{\mathcal D,r}}_{0,\vec k,n,d}.
\]
\end{thm}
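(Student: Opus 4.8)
The plan is to reduce the stacky statement of Theorem \ref{thm:stacky-rel=orb} to the already-established scheme-theoretic case (Theorem \ref{thm:rel=orb}) by mimicking the original argument of \cite{ACW} at the level of Deligne--Mumford stacks, rather than attempting to prove the equality from scratch. The essential geometric input in \cite{ACW} is a comparison of two compactifications of the space of maps with prescribed tangency along $\mathcal{D}$: the space of relative stable maps to the expanded degenerations of $(\mathcal{X},\mathcal{D})$, and the space of twisted stable maps to the $r$-th root stack $\mathcal{X}_{\mathcal{D},r}$. First I would verify that every structure used in that comparison persists when the target is a smooth proper DM stack $\mathcal{X}$ with smooth irreducible divisor $\mathcal{D}$: the root stack $\mathcal{X}_{\mathcal{D},r}$ is still a smooth DM stack (root constructions are étale-local and stack-friendly), expanded degenerations of the pair $(\mathcal{X},\mathcal{D})$ make sense with $\mathcal{X}$ stacky, and both moduli problems carry perfect obstruction theories and virtual fundamental classes. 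The age-$k_i/r$ twisted sectors of $I\mathcal{X}_{\mathcal{D},r}$ remain $\mu_r$-gerbes over $\mathcal{D}$, so the dictionary between contact order $k_i$ and orbifold age $k_i/r$ is unchanged.

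The key steps, in order, are as follows. (1) Set up the two moduli stacks $\overline{M}_{0,\vec{k},n,d}(\mathcal{X},\mathcal{D})$ and $\overline{M}_{0,\vec{k},n,d}(\mathcal{X}_{\mathcal{D},r})$ in the stacky setting, together with their virtual classes and the evaluation maps to $\mathcal{D}$ (resp.\ the twisted sectors) and to $\mathcal{X}$. (2) Establish, for $r$ sufficiently large, a comparison morphism between these spaces (or between their images under the relevant forgetful/stabilization maps) that identifies the relevant components and is compatible with virtual classes; this is the stacky analogue of the central comparison in \cite{ACW}. The "$r$ sufficiently large" hypothesis is what kills the contributions of orbifold maps whose components map entirely into $\mathcal{D}$ with large age, which are the source of the genus-one counterexample noted in the introduction but are absent in genus zero. (3) Match the insertions: $\tau_{a_i}(\delta_i)$ against the twisted-sector evaluation and the pulled-back descendant classes $\bar{\psi}_i$, using that both theories define $\bar{\psi}_i$ as pullbacks of descendants from $\overline{M}_{0,m+n,d}(\mathcal{X})$ via stabilization, so these classes agree under the comparison. (4) Push the virtual-class identity through the integrals in \eqref{relative-invariant-higher-dimension} and \eqref{orbifold-invariant-higher-dimension} to conclude equality of the two invariants.

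The main obstacle I expect is step (2): producing the stacky expanded degenerations and the comparison of virtual classes with full rigor. In \cite{ACW} this comparison is delicate even for schemes, relying on logarithmic and orbifold techniques to match the deformation-obstruction theories of relative maps to expansions with those of twisted maps to root stacks. When $\mathcal{X}$ is itself a stack one must check that the expanded degeneration of $(\mathcal{X},\mathcal{D})$ is again a nice (smooth, DM) target at each stage and that the gerbe structure on the twisted sectors interacts correctly with the expansion; in particular the automorphisms introduced by the orbifold structure of $\mathcal{X}$ must be disentangled from the $\mu_r$-automorphisms coming from the root construction so that the virtual multiplicities on both sides match. I anticipate that, because root stacks and expansions are étale-local operations and the genus-zero dimension count rules out the pathological strata, the scheme-theoretic proof descends essentially verbatim, so the argument is best presented as a careful indication that each ingredient of \cite{ACW} and \cite{TY18} is local in nature and hence applies to smooth DM stacks, with the comparison of obstruction theories being the step that requires the most attention. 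This is precisely why the detailed treatment is deferred to Appendix \ref{sec:stacky-rel=orb}, where the genus-zero relative/orbifold correspondence with stacky targets is sketched.
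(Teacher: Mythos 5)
Your outline is correct as far as it goes, but it is not the paper's proof: you adapt the direct moduli-space comparison of \cite{ACW} to stacky targets, whereas the paper (Appendix \ref{sec:stacky-rel=orb}) adapts the degeneration-plus-localization argument of \cite{TY18}. Concretely, the paper's Step 1 applies the degeneration formula (deformation to the normal cone of $\mathcal D$) simultaneously to $\mathcal X_{\mathcal D,r}$ and to $(\mathcal X,\mathcal D)$, writing both sides as sums, over identical splitting data along $\mathcal D$, of products of invariants of $(\mathcal X,\mathcal D)$ with invariants of the relative local models $(\mathcal Y_{\mathcal D_0,r},\mathcal D_\infty)$, respectively $(\mathcal Y,\mathcal D_0\cup\mathcal D_\infty)$, where $\mathcal Y=\mathbb P(\mathcal O\oplus\mathcal N_{\mathcal D})$; Step 2 then compares these two local models by $\mathbb C^*$-virtual localization (the stacky extension of \cite[Lemma 5.2]{TY18}), where the edge contributions are trivial and the vertex contributions match. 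Your route instead requires stacky expanded degenerations and a virtual-class-level comparison between the relative and orbifold moduli spaces --- exactly the step you flag as the main obstacle, and the genuinely delicate one: since the invariants differ in genus one (Maulik's counterexample), no such comparison compatible with virtual classes can exist beyond genus zero, so the genus restriction must emerge from the obstruction-theory analysis itself; your heuristic that large $r$ kills components mapping into $\mathcal D$, and the appeal to root constructions being \'etale-local, do not by themselves supply that mechanism, whereas in the paper's proof the genus-zero hypothesis enters through the explicit form of the localization vertex contributions. As for what each approach buys: yours stays closer to the original genus-zero statement and avoids invoking the degeneration formula for Deligne--Mumford stack targets; the paper's replaces the hard global comparison by a concrete localization computation on a $\mathbb P^1$-bundle over $\mathcal D$, gives the equality at the level of virtual cycles (see the remark at the end of the appendix and \cite[Section 6]{FWY}), and is the method that generalizes to higher genus, where one obtains polynomiality in $r$ rather than equality --- the subject of the forthcoming \cite{TY19}. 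The paper itself acknowledges the strategy you propose as a different approach, namely that of \cite{ACW}.
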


The proof of Theorem \ref{thm:stacky-rel=orb} follows from the ideas in \cite{TY18}. In Appendix \ref{sec:stacky-rel=orb}, we provide a sketch of the proof of Theorem \ref{thm:stacky-rel=orb}. More details of the proof will appear in the a forthcoming work with H.-H. Tseng \cite{TY19} where we will prove the relationship between all genera relative and orbifold invariants with stacky targets. Readers who do not want to assume Theorem \ref{thm:stacky-rel=orb}, may restrict their attention to the computation for toric Calabi--Yau manifolds instead of toric Calabi--Yau orbifolds in Section \ref{sec:compute-rel-inv}.

\subsection{Mirror theorem for relative pairs}

 In \cite{FTY}, a mirror theorem for relative pairs was stated in the language of Givental's Lagrangian cone which was recently developed in \cite{FWY} for relative Gromov--Witten theory. \cite[Theorem 1.4]{FTY} states the relative mirror theorem with non-extended $I$-functions which relates to relative Gromov--Witten invariants with one relative marking (hence, with maximal tangency condition). \cite[Theorem 1.5]{FTY} states the relative mirror theorem with extended $I$-functions which relates to relative Gromov--Witten invariants with more than one relative markings with possibly negative contact orders. In this paper, we only need to consider relative Gromov--Witten invariants with one relative marking. Therefore, we will only state (a version of) \cite[Theorem 1.4]{FTY}.

\begin{defn}
Let $X$ be a smooth projective variety and $D$ be a smooth nef divisor, the (non-extended) $J$-function for the pair $(X,D)$ is defined as
\[
J_{(X,D)}(\tau,z)=1+\sum_{\substack{(d,l)\neq (0,0)\\ d\in H_2^{\on{eff}}(\mathcal X)}}\sum_{\alpha}\frac{q^{d}}{l!}\left\langle \frac{\phi_\alpha}{z(z-\bar{\psi})},\tau,\ldots, \tau\right\rangle_{0,1+l, d}^{(X,D)}\phi^{\alpha},
\]
where  $\tau\in H^*(X)$; $\{\phi_\alpha\}$ is a basis of the ambient cohomology ring $i^*(H^*(X))$ of $H^*(D)$, that is, the pullback of $H^*(X)$ via the inclusion map $i:D\hookrightarrow X$; $\{\phi^\alpha\}$ is the dual basis under the Poincar\'e pairing. 
\end{defn}

Note that, the markings with insertion $\tau\in H^*(X)$ are interior markings, in other words, not relative markings. Therefore, the only relative marking is the distinguish marking (the first marking). The only relative marking has to carry maximal contact order: $D\cdot d$. We still call the first marking a relative marking even when $D\cdot d=0$.

Then we consider the corresponding (non-extended) $I$-function.
\begin{defn}
The (non-extended) $I$-function of the smooth relative pair $(X,D)$ is 
\[
I_{(X,D)}(y,z)=\sum_{d\in H_2^{\on{eff}}(X)}J_{X,d}(\tau,z)y^d\left(\prod_{0<a\leq D\cdot d-1}(D+az)\right),
\]
which is considered as an $H^*(D)$-valued function under the restriction map
\[
i^*: H^*(X)\rightarrow H^*(D).
\]
\end{defn}

\begin{conv}\label{conv-1}
We use the convention that $\prod_{0<a\leq D\cdot d-1}(D+az)=1$ when $D\cdot d=0$. This is because the $I$-function for the pair $(X,D)$ is taken as the limit of the $I$-function for the root stack $X_{D,r}$ as $r\rightarrow \infty$. According to \cite[Theorem 1.1]{FTY}, the hypergeometric factor for the $I$-function of $X_{D,r}$ is
\[
\frac{\prod_{0<a\leq D\cdot d}(D+az)}{\prod_{\langle a\rangle=\langle (D\cdot d)/r\rangle,0<a\leq (D\cdot d)/r}(\frac Dr+az)}=\frac{\prod_{a\leq D\cdot d}(D+az)}{\prod_{a\leq 0}(D+az)} \cdot\frac{\prod_{\langle a\rangle=\langle (D\cdot d)/r\rangle,a\leq 0}(\frac Dr+az)}{\prod_{\langle a\rangle=\langle (D\cdot d)/r\rangle,a\leq (D\cdot d)/r}(\frac Dr+az)}.
\]
Therefore, it is $1$ when $D\cdot d=0$.
\end{conv}

\begin{theorem}[\cite{FTY}, Theorem 1.4]\label{thm:rel-mirror}
Let $X$ be a smooth projective variety and $D$ be a smooth nef divisor such that $-K_X-D$ is nef. Then the $I$-function $I_{(X,D)}(y,z)$ coincides with the $J$-function $J_{(X,D)}(q,z)$ via change of variables, called mirror map.
\end{theorem}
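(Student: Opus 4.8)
The plan is to deduce this relative mirror theorem from the orbifold mirror theorem for the root stack $X_{D,r}$ together with a limiting argument as $r\to\infty$, exactly in the spirit of Theorem \ref{thm:rel=orb} and Convention \ref{conv-1}. First I would set up the $r$-dependent picture. For $r$ sufficiently large, Theorem \ref{thm:rel=orb} identifies the genus zero relative invariants of $(X,D)$ that assemble $J_{(X,D)}$ with the corresponding orbifold invariants of $X_{D,r}$ whose distinguished marking carries small age $k/r$. I would repackage this as the statement that the relevant part of the orbifold $J$-function $J_{X_{D,r}}$ stabilizes, as $r\to\infty$, to the relative $J$-function $J_{(X,D)}$: only the small-age twisted sectors (those labelled by a fixed contact order $k$ with age $k/r\to 0$) survive the limit, and they reconstruct precisely the positive-contact-order relative invariants.

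The technical heart is the orbifold mirror theorem for the root stack itself: produce an explicit $I$-function $I_{X_{D,r}}(y,z)$ and show it lies on Givental's Lagrangian cone of $X_{D,r}$. Here I would present $X_{D,r}$ over $X$ using the line bundle $L=\mathcal{O}_X(D)$ and the section cutting out $D$, and obtain its mirror theorem from the absolute mirror theorem for $X$ by an orbifold quantum Lefschetz / twisting argument as in \cite{Tseng}, or via the mirror theorem for toric stack bundles of \cite{JTY} when $X$ is toric. The output is precisely the hypergeometric modification of $J_{X,d}$ recorded in Convention \ref{conv-1}, namely
\[
\frac{\prod_{0<a\leq D\cdot d}(D+az)}{\prod_{\langle a\rangle=\langle (D\cdot d)/r\rangle,\,0<a\leq (D\cdot d)/r}(\tfrac Dr+az)}.
\]
The nef hypotheses enter exactly at this point: $D$ nef together with $-K_X-D$ nef guarantee that this modification introduces no positive powers of $z$ and keeps the $I$-function in the $1+O(z^{-1})$ normalization, so that the quantum Lefschetz step applies without an extra Birkhoff factorization and the change of variables can simply be read off from the $z^0$ and $z^{-1}$ coefficients.

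Finally I would take the limit $r\to\infty$. By Convention \ref{conv-1} the hypergeometric factor converges termwise to $\prod_{0<a\leq D\cdot d-1}(D+az)$, so that $I_{X_{D,r}}\to I_{(X,D)}$; simultaneously, by the previous step $J_{X_{D,r}}\to J_{(X,D)}$ and the orbifold mirror map converges to a relative mirror map. Since for each $r$ the relation $I_{X_{D,r}}=J_{X_{D,r}}$ holds after the orbifold mirror map, passing to the limit yields $I_{(X,D)}=J_{(X,D)}$ after the limiting change of variables, which is the assertion.

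The hard part will be controlling the limit uniformly. The Givental space and the Chen--Ruan state space of $X_{D,r}$ both depend on $r$ — the number of twisted sectors grows linearly in $r$ — so the Lagrangian cone on which $I_{X_{D,r}}$ sits is itself $r$-dependent. I expect the main obstacle to be showing that the property of \emph{lying on the cone} survives the limit: one must prove that the non-stabilizing (large-age) sectors decouple, that the termwise convergence of the hypergeometric factor is compatible with the cone structure, and that the orbifold quantum Lefschetz step is genuinely valid here, since it can fail for positive orbifold hypersurfaces and the nef conditions are what rescue it. Once stabilization of both $I$ and $J$ is established componentwise and matched with the convergence of the mirror map, the theorem follows.
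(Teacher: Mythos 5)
Your proposal follows essentially the same route as the proof in \cite{FTY} that this paper cites: establish the mirror theorem for the root stack $X_{D,r}$ (realized via a toric stack bundle construction, with the orbifold quantum Lefschetz principle of \cite{Tseng} and the bundle mirror theorem of \cite{JTY} used \emph{together}, not as alternatives), identify the relevant part of $J_{X_{D,r}}$ with $J_{(X,D)}$ through the genus zero relative/orbifold correspondence, and pass to the limit $r\to\infty$ where the hypergeometric factor of Convention \ref{conv-1} converges to the relative one. Your identification of where the nef hypotheses on $D$ and $-K_X-D$ enter (keeping the $I$-function in $1+O(z^{-1})$ form so no Birkhoff factorization is needed) and of the $r$-dependence of the state space as the delicate point are both consistent with the cited argument, so the proposal is correct and matches the paper's approach.
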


\begin{remark}
In \cite{FTY}, the $I$-function and the $J$-function both take value in the infinite dimensional state space 
\[
\HH=\bigoplus\limits_{i\in\mathbb Z}\HH_i,
\]
where $\HH_0=H^*(X)$ and $\HH_i=H^*(D)$ if $i\in \mathbb Z \setminus \{0\}$. The non-extended $I$-function and the non-extended $J$-function actually take value in $\bigoplus_{i\in \mathbb Z_{<0}}\HH_i$, where each copy $\HH_i$ is $H^*(D)$. In Theorem \ref{thm:rel-mirror}, we simply identify each $\HH_i$ with the same $H^*(D)$ for $i\in \mathbb Z_{<0}$.
\end{remark}

\begin{remark}
The proof of Theorem \ref{thm:rel-mirror} requires orbifold quantum Lefschetz \cite{Tseng} and the mirror theorem for toric stack bundles \cite{JTY}. As mentioned in \cite[Remark 3.4]{FTY} that neither orbifold quantum Lefschetz nor the mirror theorem for toric stack bundles is true for general stacky pairs $(\mathcal X,\mathcal D)$. However, it will not be a problem for us, since we will restrict our attention to toric pairs. A root stack of a toric orbifold along a toric divisor is still a toric orbifold whose mirror theorem is already known in \cite{CCIT}. Therefore, Theorem \ref{thm:stacky-rel=orb} directly implies a mirror theorem for toric orbifold pairs. See Theorem \ref{thm:mirror-toric-orbi-pair}.
\end{remark}

\section{Preliminaries on toric orbifolds}\label{sec:toric}

\subsection{Construction}\label{sec:def-toric}

Following the construction of \cite{BCS}, a toric orbifold is defined by a stacky fan $(N,\Sigma,\rho)$, where 
\begin{itemize}
\item
$N$ is a lattice of rank $n$; 
\item
$\Sigma \subset N_{\mathbb{R}}=N\otimes_{\mathbb{Z}}\mathbb{R}$ 
is a rational simplicial fan;
\item
$\rho:\mathbb{Z}^{m}\rightarrow N$ is a map given by $\{b_0,\cdots, b_{m-1}\}\subset N$. The $b_i$'s are vectors determining the rays of the stacky fan.
\end{itemize}
We denote by $|\Sigma|\subset N_{\mathbb R}$ the support of $\Sigma$.

Following \cite{Jiang}, we can include some additional vectors $b_m,\ldots, b_{m^\prime-1}\in N\cap |\Sigma|$. The data
\[
(N,\Sigma,\rho^S)
\]
is called an extended stacky fan, where
\begin{align*}
\rho^S:\mathbb Z^{m^\prime}\rightarrow N&\\
e_i\mapsto b_i,& \text{ for }i\in\{0,1,\ldots, m^\prime -1\},
\end{align*}
where $\{e_i\}_{i=0}^{m^\prime-1}$ is the set of standard unit vectors of $\mathbb Z^{m^\prime}$.
In this paper, we choose $\{b_j\}_{j=m}^{m^\prime -1}$ such that $\{b_i\}_{i=0}^{m-1}\cup\{b_j\}_{j=m}^{m^\prime-1}$ generates $N$ over $\mathbb Z$.

The fan sequence is
\begin{equation}\label{fan-seq}
0 \longrightarrow \mathbb{L}:=\text{ker}(\rho^S) \stackrel{\psi^\vee}{\longrightarrow} \mathbb{Z}^{m^\prime} \stackrel{\rho^S}{\longrightarrow} N\longrightarrow 0.
\end{equation}

For $I\subset\{0,1,2,\cdots,m^\prime-1\}$, let $\sigma_I$ be the cone generated by 
$b_i, i\in I$ and let $\overline{I}$ be the complement of $I$ in $\{0,1,2,\cdots,m^\prime-1\}$.  The collection of anti-cones $\mathcal{A}$ is defined as follows: 
\[
\mathcal{A}:=\left\{I\subset \{0,1,2,\cdots, m^\prime-1\}: \sigma_{\overline{I}}\in \Sigma \right\}.
\]
For $I\subset \{0,1,...,m^\prime-1\}$, define \[
\mathbb{C}^{I}:=\left\{ (z_{0},\ldots,z_{m^\prime-1}):z_{i}=0
\text{ for } i \not\in I\right\}.
\]
 Let $\cU$ be the open subset of $\mathbb{C}^{m^\prime}$ defined as
\begin{equation*}
\mathcal{U}:=\mathbb{C}^{m^\prime}\setminus \cup_{I\not\in \mathcal{A}}\mathbb{C}^{I}. 
\end{equation*}

The algebraic torus $G:=\mathbb L\otimes_{\mathbb Z}\mathbb C^\times$ acts on $\mathbb C^{m^\prime}$ via the map
\[
G:=\mathbb L\otimes_{\mathbb Z}\mathbb C^\times \longrightarrow (\mathbb C^\times)^{m^\prime},
\]
which is obtained by tensoring $\mathbb C^\times$ to the fan sequence (\ref{fan-seq}).

\begin{defn}[see \cite{BCS}] 
The toric orbifold $\mathcal{X}(\Sigma)$ 
is defined as the quotient stack 
\[
\mathcal{X}(\Sigma):=[\mathcal{U}/G].
\]
\end{defn}

\begin{defn} [\cite{BCS}]
Given a cone $\sigma$ in a stacky fan $(N,\Sigma,\rho)$, 
we define the set of box elements $\bbox(\sigma)$ as follows
\[
\bbox(\sigma)=:
\left\{ v\in N: v=
\sum\limits_{b_k\subseteq  \sigma}c_{k}b_{k}
\text{ for some }c_{k}\in[0,1)\cap \mathbb Q \right\}.
\]
The set of box elements associated to the stacky fan $(N,\Sigma,\rho)$ is defined to be
\[
\bbox(\Sigma):
=\cup_{\sigma\in \Sigma}\bbox(\sigma).
\]
\end{defn}

According to \cite{BCS}, the connected components of the inertia stack $\mathcal{IX}(\Sigma)$ 
are indexed by the elements of  $\bbox(\Sigma)$. 
Given $v\in \bbox(\Sigma)$, we denote by $\mathcal X_v$ the corresponding connected component of 
$\mathcal{IX}$. When $v\neq 0$, the connected component $\mathcal X_v$ is called a twisted sector. The age of the twisted sector $\mathcal X_v$ is defined by  $\age(v):=\sum\limits_{b_k\subseteq \sigma} c_{k}$. We write $\bbox(\Sigma)^{\age=1}\subset \bbox(\Sigma)$ for the set of box elements with ages equal to $1$. 

The Chen--Ruan cohomology $H^*_{\on{CR}}(\mathcal X,\mathbb Q)$ for an orbifold is defined in \cite{CR04}. For the toric orbifold $\mathcal X(\Sigma)$, the Chen--Ruan cohomology is given by
\[
H^d_{\on{CR}}(\mathcal X(\Sigma,\mathbb Q))=\bigoplus_{v\in \bbox(\Sigma)}H^{d-2\age(v)}(\mathcal X_v,\mathbb Q).
\]
Applying $\Hom_{\mathbb Z}(-,\mathbb Z)$ to the fan sequence (\ref{fan-seq}) gives the divisor sequence:
\begin{align}\label{divisor-seq}
0\longrightarrow M\stackrel{(\rho^{S})^{\vee}}{\longrightarrow} (\mathbb{Z}^{m^\prime})^\vee\stackrel {\psi^{\vee}}{\longrightarrow}\mathbb{L}^{\vee}\rightarrow 0,
\end{align}
where $M:=N^\vee=\Hom(N,\mathbb Z)$ and $\mathbb L^\vee=\Hom(\mathbb L,\mathbb Z)$. The Picard group $\Pic(\mathcal{X}(\Sigma))$ 
of $\mathcal{X}(\Sigma)$
can be identified with the character group 
$\Hom(G,\mathbb{C}^{\times})$. 

Let $\{e_i^\vee\}_{i=0}^{m^\prime-1}\subset (\mathbb Z^{m^\prime})^\vee$ be the dual basis  of $\{e_i\}_{i=0}^{m^\prime-1}\subset (\mathbb Z^{m^\prime})$. We also set:
\[
D_i:=\psi^\vee(e_i^{\vee})\in \mathbb L^\vee, i\in\{0,\ldots, m^\prime-1\}.
\]
The collection $\{D_i\}_{i=0}^{m-1}$ of toric prime divisors corresponds to the generators  $\{b_i\}_{i=0}^{m-1}$. Moreover, we have
\[
H^{2}(\mathcal{X}(\Sigma);\mathbb{Z})\cong (\mathbb L^\vee\otimes \mathbb Q)/(\sum_{i=m}^{m^\prime-1}\mathbb Q D_i).
\]
There is a canonical splitting of the quotient map $\mathbb L^\vee \otimes \mathbb Q\rightarrow H^2(\mathcal X,\mathbb Q)$ as described in \cite[Section 3.1.2]{Iritani}. We briefly describe the splitting in here. For $m\leq j \leq m^\prime-1$, each $b_j$ is contained in a cone of $\Sigma$.  Let $I_j\in \mathcal A$ be the anticone of the minimal cone containing $b_j$. Then we have
\[
b_j=\sum_{i\not\in I_j}c_{ji}b_i\in N\otimes \mathbb Q,
\]
where $c_{ji}\in \mathbb Q_{\geq 0}$. By the divisor sequence (\ref{divisor-seq}) tensored with $\mathbb Q$, we can find unique $D_j^\vee\in \mathbb L\otimes \mathbb Q$ such that 
\begin{align}\label{D-dual}
D_i\cdot D_j^\vee=\left\{
\begin{array}{cc}
1& \text{ if } i=j,\\
-c_{ji}& \text{ if }i\not\in I_j,\\
0 & \text{ if }\in I_j\setminus \{j\}.
\end{array}
\right.
\end{align}
These vectors $D_j^\vee$ define a decomposition 
\[
\mathbb L^\vee \otimes \mathbb Q=\on{Ker}\left((D_m^\vee,\ldots, D_{m^\prime-1}^\vee):\mathbb L^\vee\otimes \mathbb Q\rightarrow \mathbb Q^{m^\prime-m}\right)\oplus \bigoplus_{j=m}^{m^\prime-1}\mathbb Q D_j.
\]
The factor $\on{Ker}\left((D_m^\vee,\ldots, D_{m^\prime-1}^\vee):\mathbb L^\vee\otimes \mathbb Q\rightarrow \mathbb Q^{m^\prime-m}\right)$ is identified with $H^2(\mathcal X,\mathbb Q)$ under the quotient map $\mathbb L^\vee \otimes \mathbb Q\rightarrow H^2(\mathcal X,\mathbb Q)$. Therefore, $H^2(\mathcal X,\mathbb Q)$ can be regarded as a subspace of $\mathbb L^\vee \otimes \mathbb Q$.

The extended K\"ahler cone $\tilde{C}_{\mathcal X}$ is defined as
\[
\tilde{C}_{\mathcal X}:=\bigcap_{I\in \mathcal A}(\sum_{i\in I}\mathbb R_{>0}D_i)\subset \mathbb L^\vee\otimes \mathbb R.
\]
The genuine K\"ahler cone $C_{\mathcal X}$ of $\mathcal X$ is the image of $\tilde {C}_{\mathcal X}$ under $\mathbb L^\vee \otimes \mathbb R\rightarrow H^2(\mathcal X,\mathbb R)$.

We set $r:=m^\prime -n$ and $r^\prime:=m-n$. Then the rank of $\mathbb L^\vee$ is $r$ and the rank of $H_2(\mathcal X,\mathbb Z)$ is $r^\prime$. We choose an integral basis $\{p_1,\ldots, p_r\}$ of $\mathbb L^\vee$ such that $p_a$ is in the closure $\on{cl}(\tilde{C}_{\mathcal X})$ of $\tilde{C}_{\mathcal X}$ for all $a\in\{1,\ldots,r\}$ and $p_{r^\prime+1},\ldots,p_r$ are in $\sum_{i=m^\prime+1}^m\mathbb R_{\geq 0}D_i$. The images $\bar{p}_1,\ldots,\bar{p}_{r^\prime}$ of $p_1,\ldots,p_{r^\prime}$ in $H^2(\mathcal X,\mathbb R)$ are nef and the images $\bar{p}_{r^\prime+1},\ldots,\bar{p}_r$ of $p_{r^\prime+1},\ldots,p_r$ are zero. We define a matrix $(m_{ia})$ by
\[
D_i=\sum_{a=1}^r m_{ia}p_a, \quad m_{ia}\in \mathbb Z.
\]
Let $\bar {D}_i$ be the image of $D_i$ under the quotient map $\mathbb L^\vee\otimes \mathbb Q\rightarrow H^2(\mathcal X,\mathbb Q)$. Then for $i=0,\ldots,m-1$, the toric divisor $\bar{D}_i$ is given by
\[
\bar{D}_i=\sum_{a=1}^{r^\prime}m_{ia}\bar{p}_{a}.
\]
For $m\leq j\leq m^\prime-1$, we have
\[
\bar{D}_j=0.
\]
Let $\{\gamma_1,\ldots, \gamma_r\}\subset \mathbb L$ be the dual basis of $\{p_1,\ldots,p_r\}\subset \mathbb L^\vee$ defined by
\[
\gamma_a=\sum_{i=0}^{m^\prime-1}m_{ia}e_i\in (\mathbb Z)^{m^\prime}.
\] 
Then $\{\gamma_1,\ldots,\gamma_{r^\prime}\}$ forms a basis of $H_2^{\on{eff}}(\mathcal X,\mathbb Q)$.

\begin{defn}[\cite{CCLT}, Definition 2.8]
A toric orbifold $\mathcal X$ is semi-Fano if $\hat{\rho}(\mathcal X):=\sum_{i=0}^{m^\prime-1}D_i$ is contained in the closure of the extended K\"ahler cone $\tilde{C}_{\mathcal X}$. 
\end{defn}

\begin{remark}
The semi-Fano condition depends on the choice of the extra vectors $b_m,\ldots, b_{m^\prime-1}$. By \cite[Lemma 3.3]{Iritani}, we have
\[
\hat{\rho}(\mathcal X)=c_1(\mathcal X)+\sum_{j=m}^{m^\prime-1}(1-\age(b_j))D_j.
\] 
Hence, the semi-Fano condition holds if and only if the first Chern class $c_1(\mathcal X)\in H^2(\mathcal X;\mathbb Q)$ of $\mathcal X$ is contained in the closure of the K\"ahler cone $C_{\mathcal X}$ and $\age(b_j)\leq 1$ for $m\leq j\leq m^\prime-1$.
\end{remark}

When $\mathcal X$ is a toric variety, the semi-Fano condition is equivalent to the condition that the anticanonical divisor $-K_{\mathcal X}$ is nef.

Following \cite{CCLT}, we only consider toric orbifolds that satisfy the following assumption.

\begin{assumption}\label{assumption-extra-vectors}
We assume that we always choose the extra vectors $b_m\ldots,b_{m^\prime-1}$ such that
\[
\{b_m\ldots, b_{m^\prime-1}\} =\bbox(\Sigma)^{\age=1}:=\{v\in \bbox(\Sigma)|\age(v)=1\}
\]
and $\{b_0,\ldots, b_{m^\prime-1}\}$ generates $N$ over $\mathbb Z$.
\end{assumption}

As in \cite{Iritani}, we consider the following two subsets of $\mathbb L\otimes \mathbb Q$:
\[
\mathbb K=\{d\in \mathbb L\otimes \mathbb Q;\{i\in\{0,\ldots,m^\prime-1\};D_i\cdot d\in \mathbb Z\}\in \mathcal A\};
\]
\[
\mathbb K_{\on{eff}}=\{d\in \mathbb L\otimes \mathbb Q;\{i\in\{0,\ldots,m^\prime-1\};D_i\cdot d\in \mathbb Z_{\geq 0}\}\in \mathcal A\}.
\]
For a real number $r$, let $\lceil r\rceil$, $\lfloor r\rfloor$ and $\langle r\rangle$ denote the ceiling, floor and fractional part of $r$ respectively. For $d\in \mathbb K$, we define
\[
v(d):=\sum_{i=0}^{m^\prime-1} \lceil D_i\cdot d\rceil b_i \in N.
\]
Note that $v(d)$ is a box element since
\[
v(d)=\sum_{i=0}^{m^\prime-1}(\langle -D_i\cdot d\rangle+D_i\cdot d)b_i=\sum_{i=0}^{m^\prime-1}\langle-D_i\cdot d \rangle b_i \in N\otimes \mathbb Q,
\]
by the fan sequence (\ref{fan-seq}). Therefore, if $v(d)$ is not zero, it corresponds to a twisted sector $\mathcal X_{v(d)}$ of the inertia stack $\mathcal {IX}$ of $\mathcal X$.
\subsection{Toric mirror theorems}

In this section, we state the mirror theorem for toric orbifolds proved in \cite{CCIT}. Recently, a mirror theorem for smooth relative pairs is proved in \cite{FTY}. In this paper, we will need a version of the mirror theorem in \cite{FTY} for toric orbifold pairs $(\mathcal X,D)$, that is, a toric orbifold $\mathcal X$ along with its toric prime divisor $D$. However, the mirror theorem in \cite{FTY} is only proved for the case when $\mathcal X$ is a smooth projective variety. In the forthcoming paper \cite{TY19}, we will extend the relation between relative and orbifold Gromov--Witten invariants to the pair $(\mathcal X,D)$ when $\mathcal X$ is a smooth Deligne-Mumford stack. However, such relation does not imply a mirror theorem for orbifold relative pairs in general, since the corresponding mirror theorem for root stacks is not known to be true as explained in \cite[Remark 3.4]{FTY}. In this paper, we only consider orbifold toric pairs, whose mirror theorem holds since the corresponding root stacks are still toric orbifolds and a mirror theorem for toric orbifolds is proved in \cite{CCIT}.

We first consider the $I$-function for a toric orbifold $\mathcal X$.

\begin{defn}
The $I$-function of a toric orbifold $\mathcal X$ is an $H^*_{\on{CR}}(\mathcal X)$-valued power series defined by
\[
I_{\mathcal X}(y,z)=e^{t/z}\sum_{d\in\mathbb K_{\on{eff}}}y^{d}\left(\prod_{i=0}^{m^\prime-1}\frac{\prod_{a\leq 0,\langle a\rangle=\langle D_i\cdot d\rangle}(\bar{D}_i+az)}{\prod_{a \leq D_i\cdot d,,\langle a\rangle=\langle D_i\cdot d\rangle}(\bar{D}_i+az)}\right)\textbf{1}_{v(d)},
\]
where $t=\sum_{a=1}^r \bar p_a \log y_a$, $y^d=y_1^{p_1\cdot d}\cdots y_r^{p_r\cdot d}$ and, $\textbf{1}_{v(d)}\in H^0(\mathcal X_{v(d)})$ is the fundamental class of the twisted sector $\mathcal X_{v(d)}$.
\end{defn}

By \cite[Lemma 4.2]{Iritani}, when $\mathcal X$ is semi-Fano, the $I$-function is a convergent power series in $y_1,\ldots,y_r$. The $I$-function can be expanded as follows
\[
I_{\mathcal X}(y,z)=1+\frac{\tau(y)}{z}+O(z^{-2}),
\]
where $\tau(y)\in H_{\on{CR}}^{\leq 2}(\mathcal X)$ is called the mirror map.

Then we consider the $J$-function for a toric orbifold $\mathcal X$.
\begin{defn}
The $J$-function of a toric orbifold $\mathcal X$ is defined as
\[
J_{\mathcal X}(\tau,z)=e^{\tau_{0,2}/z}\left(1+\sum_{\substack{(d,l)\neq (0,0)\\ d\in H_2^{\on{eff}}(\mathcal X)}}\sum_{\alpha}\frac{q^{d}}{l!}\left\langle \frac{\phi_\alpha}{z(z-\bar{\psi})},\tau_{tw},\ldots, \tau_{tw}\right\rangle_{0,1+l, d}^{\mathcal X}\phi^{\alpha}\right),
\]
where $\tau_{0,2}=\sum_{a=1}^r \bar p_a\log q_a\in H^2(\mathcal X)$, $\tau_{tw}=\sum_{j=m}^{m^\prime-1}\tau_{b_j}\textbf{1}_{b_j}$, $q^d=q_1^{p_1\cdot d}\cdots q_{r^\prime}^{p_{r^\prime}\cdot d}$ and, $\{\phi_\alpha\},\{\phi^\alpha\}$ are dual basis of $H^*_{\on{CR}}(\mathcal X)$.
\end{defn}

The mirror theorem for a toric orbifold $\mathcal X$ can be stated as an equality between the $J$-function and the $I$-function via the mirror map.
\begin{theorem}[\cite{CCFK}, Theorem 3.12; \cite{CCIT}, Theorem 31]\label{thm:toric-stack}
Let $\mathcal X$ be a smooth semi-Fano toric orbifold with a projective coarse moduli space, then we have
\[
J_{\mathcal X}(\tau(y),z)=I_{\mathcal X}(y,z),
\]
where $\tau(y)$ is the mirror map.
\end{theorem}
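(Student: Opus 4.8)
The plan is to prove the equality by exhibiting both sides as points on Givental's genus-zero Lagrangian cone and then using the geometry of that cone to match them. Let $\mathcal{H}=H^*_{\on{CR}}(\mathcal X)\otimes \mathbb{C}((z^{-1}))$, equipped with Givental's symplectic pairing and its polarization $\mathcal{H}=\mathcal{H}_+\oplus\mathcal{H}_-$, and let $\mathcal{L}_{\mathcal X}\subset\mathcal{H}$ be the Lagrangian cone encoding all genus-zero descendant invariants of $\mathcal X$. I would take as the backbone of the argument two standard structural facts: first, $\mathcal{L}_{\mathcal X}$ is a cone whose tangent spaces $T$ satisfy the overruled property $zT\subset\mathcal{L}_{\mathcal X}$ and $T\cap\mathcal{L}_{\mathcal X}=zT$; second, the $J$-function parametrizes the cone, in the sense that $-z\,J_{\mathcal X}(\tau,-z)\in\mathcal{L}_{\mathcal X}$ for every $\tau\in H^*_{\on{CR}}(\mathcal X)$, and moreover any element of $\mathcal{L}_{\mathcal X}$ whose $\mathcal{H}_+$-component equals $-z+\tau$ is \emph{exactly} $-z\,J_{\mathcal X}(\tau,-z)$. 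Both facts are consequences of the string, dilaton, and topological recursion relations in genus zero.

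Granting this, the whole content of the theorem reduces to the single assertion that the $I$-function also lies on the cone, namely $-z\,I_{\mathcal X}(y,-z)\in\mathcal{L}_{\mathcal X}$ for all $y$. Indeed, from the explicit formula one reads off the expansion $I_{\mathcal X}(y,z)=1+\tau(y)/z+O(z^{-2})$, so that the $\mathcal{H}_+$-component of $-z\,I_{\mathcal X}(y,-z)$ is precisely $-z+\tau(y)$. By the parametrization property it must then coincide with $-z\,J_{\mathcal X}(\tau(y),-z)$, where $\tau(y)$ is the mirror map extracted from the $z^{-1}$-coefficient of $I_{\mathcal X}$. Cancelling the factor and substituting $-z\mapsto z$ yields $I_{\mathcal X}(y,z)=J_{\mathcal X}(\tau(y),z)$, which is the claim.

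The hard part is therefore showing $-z\,I_{\mathcal X}(y,-z)\in\mathcal{L}_{\mathcal X}$, and here I would run the toric torus-localization argument in the style of Givental, adapted to stacks via the virtual localization formula (Graber--Pandharipande, and its extension to Deligne--Mumford stacks). I would work $\mathbb{T}$-equivariantly for the dense torus $\mathbb{T}=(\mathbb{C}^\times)^n$ acting on $\mathcal X$ and pass to the graph space, the moduli of genus-zero stable maps to $\mathcal X\times\mathbb{P}^1$ of degree $(d,1)$, carrying the extra $\mathbb{C}^\times$-action on the $\mathbb{P}^1$-factor. Localizing with respect to this $\mathbb{C}^\times$ produces a sum over fixed loci classified by how the domain distributes over $0,\infty\in\mathbb{P}^1$. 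The distinguished family, in which the bulk of the curve together with all interior markings is contracted over $0\in\mathbb{P}^1$ while a single rational tail covers the $\mathbb{P}^1$-factor with degree reflecting $d$, contributes precisely the hypergeometric factor $\prod_{i=0}^{m^\prime-1}\frac{\prod_{a\leq 0,\,\langle a\rangle=\langle D_i\cdot d\rangle}(\bar{D}_i+az)}{\prod_{a\leq D_i\cdot d,\,\langle a\rangle=\langle D_i\cdot d\rangle}(\bar{D}_i+az)}$ weighting $\mathbf{1}_{v(d)}$: the numerator and denominator are the $\mathbb{C}^\times$-equivariant Euler classes of the relevant pieces of the tangent--obstruction complex of the multiple cover, and the fractional-age bookkeeping producing $v(d)$ comes from the isotropy at the orbifold node, exactly as in the definition of $I_{\mathcal X}$. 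The remaining fixed-loci contributions reorganize into Givental's recursion, which is the precise form of the statement that the assembled series lies on $\mathcal{L}_{\mathcal X}$.

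Equivalently — and this is essentially the route of the cited references — one may invoke the abstract Coates--Givental characterization of $\mathcal{L}_{\mathcal X}$: an $\mathcal{H}_+$-valued family is a slice of the cone precisely when its components satisfy a recursion governed by the equivariant vertex and edge factors, and one checks directly that the hypergeometric modification defining $I_{\mathcal X}$ satisfies this recursion. The main technical obstacle, and the place where the orbifold hypothesis genuinely enters, is the localization bookkeeping on the stacky graph space: one must track the isotropy and age shifts at the nodes so that the equivariant Euler classes collapse, in the nonequivariant limit, to the Chen--Ruan classes $\mathbf{1}_{v(d)}$ and to the stated products over $a$ with $\langle a\rangle=\langle D_i\cdot d\rangle$. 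The semi-Fano hypothesis is used separately to guarantee convergence of the $I$-function via \cite[Lemma 4.2]{Iritani}, so that the mirror map is a bona fide change of variables rather than a purely formal one, while projectivity of the coarse moduli space ensures the relevant moduli spaces are proper and the virtual integrals and localization are legitimate.
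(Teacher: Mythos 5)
This theorem is not proved in the paper at all: it is imported as a black box, cited as \cite{CCFK}, Theorem 3.12 and \cite{CCIT}, Theorem 31, and everything downstream (Theorem \ref{thm:mirror-toric-orbi-pair} and the computations of Section \ref{sec:compute-rel-inv}) simply invokes it. So there is no in-paper proof to measure your attempt against; the comparison has to be with the cited sources. Against those, your outline is accurate and well-organized: the reduction of the whole statement to the single claim that $-z\,I_{\mathcal X}(y,-z)$ lies on the overruled Lagrangian cone, combined with the parametrization of the cone by the $J$-function, is exactly the standard framework, and your two proposed routes for proving cone membership correspond to the two citations. \cite{CCIT} proceed by your second route --- a characterization of cone membership through restrictions to torus fixed points (prescribed pole structure plus a recursion, in the style of Brown's theorem for toric fibrations), verified for the hypergeometric series --- while \cite{CCFK} reach the same statement by a genuinely different mechanism, quasimap wall-crossing, which your sketch does not mention. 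You also place the hypotheses correctly: semi-Fano enters through convergence of the $I$-function (\cite[Lemma 4.2]{Iritani}), making the mirror map an honest rather than formal change of variables, and projectivity of the coarse moduli space underwrites properness and virtual localization. The only caveat is that your proposal is a strategy sketch, not a proof: the verification that the hypergeometric modification actually satisfies the recursion and pole conditions, with all the age and isotropy bookkeeping at orbifold nodes, is the technical bulk of \cite{CCIT} and is described rather than carried out. That is a reasonable level of detail here, since the paper under review likewise treats the result as known.
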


For semi-projective semi-Fano toric orbifolds (e.g. toric Calabi--Yau orbifolds), one can also state the equivariant mirror theorem.  The mirror map is still denoted by $\tau(q)$.

Now we turn our attention to the mirror theorem for a toric pair $(\mathcal X,D)$, where $\mathcal X$ is a toric orbifold and $D$ is a toric divisor of $\mathcal X$. We consider the (non-extended) $J$-function and (non-extended) $I$-function for the pair $(\mathcal X,D)$.
\begin{defn}
The (non-extended) $J$-function for a toric pair $(\mathcal X,D)$ is defined as
\[
J_{(\mathcal X,D)}(\tau,z):=e^{\tau_{0,2}/z}\left(1+\sum_{\substack{(l, d)\neq (0,0)\\d\in H_2^{\on{eff}}(\mathcal X)}}\sum_{\alpha}\frac{q^{d}}{l!}\left\langle \frac{\phi_\alpha}{z(z-\bar{\psi})},\tau_{tw},\ldots, \tau_{tw}\right\rangle_{0,1+l, d}^{(\mathcal X,D)}\phi^{\alpha}\right),
\]
where $\{\phi_\alpha\}$ is a basis of the ambient cohomology ring $i^*(H^*_{\on{CR}}(\mathcal X))$ of $H^*_{\on{CR}}(D)$, that is, the pullback of $H^*_{\on{CR}}(\mathcal X)$ via the inclusion map $i:D\hookrightarrow \mathcal X$; $\{\phi^\alpha\}$ is the dual basis under the Poincar\'e pairing. 
\end{defn}
\begin{defn}
The (non-extended) $I$-function for a toric pair $(\mathcal X,D)$ is defined as
\[
I_{(\mathcal X,D)}(y,z)
= e^{t/z}\sum_{d\in \mathbb K_{\on{eff}}}y^{d}\left(\prod_{i=0}^{m^\prime-1}\frac{\prod_{a\leq 0,\langle a\rangle=\langle D_i\cdot d\rangle}(\bar{D}_i+az)}{\prod_{a \leq D_i\cdot d,,\langle a\rangle=\langle D_i\cdot d\rangle}(\bar{D}_i+az)}\right)\frac{\prod_{a\leq D\cdot d-1}(\bar{D}+az)}{\prod_{a \leq 0}(\bar{D}+az)}\textbf{1}_{v(d)}\\
\]
\end{defn}

By Theorem \ref{thm:stacky-rel=orb}, the genus zero relative Gromov--Witten invariants of $(\mathcal X,D)$ are equal to the genus zero orbifold Gromov--Witten invariants of the $r$-th root stack $\mathcal X_{D,r}$ of $\mathcal X$ along the toric divisor $D$ for sufficiently large $r$. Therefore, Theorem \ref{thm:stacky-rel=orb} and Theorem \ref{thm:toric-stack} together imply the following theorem.
\begin{thm}\label{thm:mirror-toric-orbi-pair}
Let $\mathcal X$ be a smooth semi-Fano toric orbifold with a projective coarse moduli space and $D$ be a toric prime divisor such that $-K_{\mathcal X}-D$ is nef, then we have
\[
J_{(\mathcal X,D)}(\tau(y),z)=I_{(\mathcal X,D)}(y,z),
\]
where $\tau(y)$ is called the (relative) mirror map which is the coefficient of the $z^{-1}$-term in $I_{(\mathcal X,D)}(y,z)$.
\end{thm}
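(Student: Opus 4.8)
The plan is to derive Theorem \ref{thm:mirror-toric-orbi-pair} by combining the two results already established in the excerpt: the genus zero relative/orbifold correspondence for stacky targets (Theorem \ref{thm:stacky-rel=orb}) and the mirror theorem for toric orbifolds (Theorem \ref{thm:toric-stack}). The key structural observation is that the $r$-th root stack $\mathcal X_{D,r}$ of a toric orbifold $\mathcal X$ along a \emph{toric} prime divisor $D$ is again a toric orbifold, whose stacky fan is obtained from that of $\mathcal X$ by scaling the generator $b$ corresponding to $D$ by a factor of $r$ (equivalently, subdividing the lattice in the $b$-direction). Since $\mathcal X$ is semi-Fano with projective coarse moduli space and $-K_{\mathcal X}-D$ is nef, one checks that $\mathcal X_{D,r}$ remains semi-Fano with projective coarse moduli space for all sufficiently large $r$; the nefness of $-K_{\mathcal X}-D$ is precisely what guarantees that the anticanonical class of the root stack stays in the closure of its K\"ahler cone. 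Thus Theorem \ref{thm:toric-stack} applies to $\mathcal X_{D,r}$, giving $J_{\mathcal X_{D,r}}(\tau_r(y),z)=I_{\mathcal X_{D,r}}(y,z)$.

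First I would write down the $I$-function $I_{\mathcal X_{D,r}}(y,z)$ explicitly using the toric data of the root stack. Because $\mathcal X_{D,r}$ differs from $\mathcal X$ only by the extra $\mu_r$-structure along $D$, its hypergeometric factors agree with those of $\mathcal X$ except for one modified factor associated to the generator $b$ of $D$; this is exactly the factor recorded in Convention \ref{conv-1}. Second, I would take the limit $r\rightarrow \infty$ following the argument of \cite[Section 4]{FTY}. As $r$ grows, the contributions from orbifold markings along $D$ with ages $k/r$ organize into the relative markings with contact orders $k$, and the modified hypergeometric factor converges termwise to
\[
\frac{\prod_{a\leq D\cdot d-1}(\bar{D}+az)}{\prod_{a\leq 0}(\bar{D}+az)},
\]
which is precisely the extra factor appearing in the definition of $I_{(\mathcal X,D)}(y,z)$. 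The convergence here uses the semi-Fano hypothesis, under which the series is a genuine convergent power series in $y$ by \cite[Lemma 4.2]{Iritani}, so the limit can be taken coefficient by coefficient without analytic subtleties. This identifies $\lim_{r\rightarrow\infty} I_{\mathcal X_{D,r}}(y,z)=I_{(\mathcal X,D)}(y,z)$.

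On the $J$-function side, Theorem \ref{thm:stacky-rel=orb} gives, for each fixed degree $d$ and fixed contact/interior configuration, an equality between the genus zero orbifold invariants of $\mathcal X_{D,r}$ and the genus zero relative invariants of $(\mathcal X,D)$ once $r$ is sufficiently large (depending on $d$). Summing over the finitely many configurations contributing to each coefficient of the $J$-function, and using the stabilization of each term, I would conclude $\lim_{r\rightarrow\infty} J_{\mathcal X_{D,r}}(\tau_r(y),z)=J_{(\mathcal X,D)}(\tau(y),z)$, where the mirror map $\tau(y)$ is the limit of the orbifold mirror maps $\tau_r(y)$ and coincides with the $z^{-1}$-coefficient of $I_{(\mathcal X,D)}(y,z)$. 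Passing to the limit in the identity $J_{\mathcal X_{D,r}}=I_{\mathcal X_{D,r}}$ then yields $J_{(\mathcal X,D)}(\tau(y),z)=I_{(\mathcal X,D)}(y,z)$.

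The main obstacle I anticipate is controlling the interchange of the limit $r\rightarrow\infty$ with the infinite sums defining the $I$- and $J$-functions, and in particular justifying that the ``sufficiently large $r$'' thresholds from Theorem \ref{thm:stacky-rel=orb} can be handled uniformly enough to pass to the limit. The resolution is that both generating functions are power series in the Novikov/K\"ahler variables $y$ (convergent by the semi-Fano assumption), so it suffices to verify the equality degree by degree in $d$; for each fixed $d$ only finitely many terms contribute, and Theorem \ref{thm:stacky-rel=orb} applies once $r$ exceeds the (degree-dependent but finite) stabilization threshold. A secondary technical point is matching the two mirror maps: I would need to check that the ambient state space $i^*(H^*_{\on{CR}}(\mathcal X))$ in which the relative $J$-function takes values is the correct limit of the age-$<1$ sectors of $\mathcal X_{D,r}$, so that the change of variables $\tau(y)$ extracted from $I_{(\mathcal X,D)}$ agrees with the limit of the orbifold mirror maps; this is exactly the identification of $\HH_i$ with $H^*_{\on{CR}}(D)$ discussed in the remark following Theorem \ref{thm:rel-mirror}.
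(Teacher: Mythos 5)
Your proposal is correct and follows essentially the same route as the paper: the paper also observes that $\mathcal X_{D,r}$ is again a toric orbifold (so the hypersurface construction of \cite{FTY} can be avoided), applies the toric mirror theorem of \cite{CCIT} (Theorem \ref{thm:toric-stack}) to the root stack, identifies the orbifold invariants with the relative invariants via Theorem \ref{thm:stacky-rel=orb}, and takes the $r\rightarrow\infty$ limit as in \cite[Section 4]{FTY}. Your write-up in fact supplies more detail (the semi-Fano check for $\mathcal X_{D,r}$ using nefness of $-K_{\mathcal X}-D$, and the degree-by-degree handling of the stabilization threshold in $r$) than the paper's one-line deduction.
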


For simplicity, readers can also restrict their attention to the case when $\mathcal X$ is a toric variety. Then Theorem \ref{thm:mirror-toric-orbi-pair} is just a special case of Theorem \ref{thm:rel-mirror} for toric varieties and their toric prime divisors.

\subsection{Open Gromov--Witten invariants of toric orbifolds}

Another object that we want to review is the genus $0$ open orbifold Gromov--Witten invariants of toric orbifolds which is defined in \cite{CP}. We refer readers to \cite{CP} and \cite[Section 3]{CCLT} for more details about the definition.

We briefly recall the definition of Chern--Weil Maslov index following \cite{CS}. See also \cite[Appendix A]{CCLT}. Let $\mathcal C$ be a bordered orbifold Riemann surface with interior orbifold marked points $z_1^+,\ldots, z_l^+\in \mathcal C$ such that the orbifold structure at each $z_j^+$ is given by a branched covering map $z\mapsto z^{m_j}$ for some positive integer $m_j$. Let $\mathcal E$ be an orbifold vector bundle over $\mathcal C$ and $\mathcal L$ be a Lagrangian subbundle over the boundary $\partial \mathcal C$. According to \cite[Definition 2.3]{CS}, a unitary connection $\nabla$ of $\mathcal E$ is called $\mathcal L$-orthogonal if $\mathcal L$ is preserved by the parallel transport via $\nabla$ along the boundary $\partial \mathcal C$.

\begin{definition}[\cite{CS}, Definition 6.4]
The Chern--Weil Maslov index of the bundle pair $(\mathcal E, \mathcal L)$ is defined by
\[
\mu_{CW}(\mathcal E,\mathcal L)=\frac{\sqrt{-1}}{\pi}\int_{\mathcal C} \on{tr}(F_{\nabla}),
\]
where $F_{\nabla}\in \Omega^2(\mathcal C, \on{End}(\mathcal E))$ is the curvature induced by an $\mathcal L$-orthogonal connection $\nabla$.
\end{definition}

By \cite[Proposition 6.10]{CS}, the desingularized Maslov index $\mu^{\on{de}}$ defined in \cite[Section 3]{CP} is related to the Chern--Weil Maslov indices as follows:
\[
\mu_{CW}(\mathcal E,\mathcal L)=\mu^{\on{de}}(\mathcal E,\mathcal L)+2\sum_{j=1}^l\on{age}(\mathcal E;z_j^+).
\]
Let $w: (\mathcal C,\partial \mathcal C)\rightarrow (\mathcal X,L)$ be a holomorphic map from a bordered orbifold Riemann surface $\mathcal C$ to a K\"ahler orbifold $\mathcal X$ of complex dimension $n$ such that $w(\partial \mathcal C)$ is contained in the Lagrangian submanifold $L$. If $\beta\in \pi_2(\mathcal X,L)$ is represented by a holomorphic map $w$, then we put
\[
\mu_{CW}(\beta):=\mu_{CW}(w):=\mu_{CW}(w^*T\mathcal X,w^*TL).
\]
Suppose $\mathcal X$ is equipped with a non-zero meromorphic $n$-form $\Omega$ which has at worst simple poles. Let $D\subset \mathcal X$ be the pole divisor of $\Omega$ such that the generic points of $D$ are smooth. By \cite[Lemma A.3]{CCLT}, for a special Lagrangian submanifold $L\subset \mathcal X$, the Chern--Weil Maslov index of a class $\beta\in \pi_2(\mathcal X,L)$ is given by
\[
\mu_{CW}(\beta)=2\beta\cdot D.
\]
The values of Chern--Weil Maslov indices are computed in \cite[Theorem 6.2]{CP} for toric orbifolds. We also refer to \cite[Section 3.1]{CCLT} for more details for toric Calabi--Yau orbifolds.

Let $\mathcal X$ be a toric orbifold defined in Section \ref{sec:def-toric}. Let $L\subset \mathcal X$ be a Lagrangian torus fiber of the moment map
\[
\mu:\mathcal X\rightarrow M_{\mathbb R}:=M\otimes _{\mathbb Z}\mathbb R.
\] 
Given $i\in\{0,1,\ldots,m-1\}$, there is a smooth holomorphic disk intersecting the associated toric prime divisor $D_i$ with multiplicity one. Its homotopy class is denoted by $\beta_i\in \pi_2(\mathcal X, L)$. The Chern--Weil Maslov index is 
\[
\mu_{CW}(\beta_i)=2.
\] 
Given a twisted sector $v\in \bbox(\Sigma)$ of the toric orbifold $\mathcal X$, there is a unique holomorphic orbi-disk whose homotopy class is denoted by $\beta_v \in \pi_2(\mathcal X, L)$ with Chern--Weil Maslov index 
\[
\mu_{CW}(\beta_v)=2\on{age}(v).
\] 
By \cite[Lemma 9.1]{CP}, the relative homotopy group $\pi_2(\mathcal X, L)$ is generated by the classes $\beta_i$ for $i=0,\ldots,m-1$ together with $\beta_v$ for $v\in \bbox(\Sigma)\setminus\{0\}$. These generators of $\pi_2(\mathcal X,L)$ are called basic disk classes. Basic disk classes are the analogue of Maslov index two disk classes for toric manifolds. 
The following classification of basic disk classes is given in \cite[Corollary 6.3 and Corollary 6.4]{CP}:
\begin{itemize}
\item The smooth holomorphic disks of Maslov index two are in a one-to-one correspondence with the stacky vectors $\{b_0,\ldots,b_{m-1}\}$.
\item The holomorphic orbi-disks with one interior orbifold marked point and desingularized Maslov index zero are in a one-to-one correspondence with the twisted sectors $v\in \bbox(\Sigma)\setminus \{0\}$ of the toric orbifold $\mathcal X$.
\end{itemize}

Let $\beta\in \pi_2(\mathcal X,L)=H_2(\mathcal X,L;\mathbb Z)$ be a relative homotopy class. We consider holomorphic orbi-disks in $\mathcal X$ bounded by $L$ and representing the class $\beta$. Let $\mathcal X_{v_1},\ldots, \mathcal X_{v_l}$ be twisted sectors of $\mathcal X$. Consider the moduli space $\mathcal M_{1,l}^{main}(L,\beta, \vec x)$ of good representable stable maps from bordered orbifold Riemann surfaces of genus zero with one boundary marked point and $l$ interior marked points of type $\vec x=(\mathcal X_{v_1},\ldots,\mathcal X_{v_l})$ representing the class $\beta\in \pi_2(\mathcal X,L)$. The superscript "$main$" means that we have chosen a connected component on which the boundary marked points respect the cyclic order of $S^1=\partial D^2$. Following \cite{CP}, the moduli space $\mathcal M_{1,l}^{main}(L,\beta,\vec x)$ carries a non-vanishing virtual fundamental chain only if the following equality for the Chern--Weil Maslov index $\mu_{CW}(\beta)$ of $\beta$ holds:
\[
\mu_{CW}(\beta)=2+\sum_{j=1}^l(2\age(v_j)-2).
\]
The following compactness of the disk moduli is proved in \cite[Proposition 6.10]{CCLT}
\begin{proposition}[\cite{CCLT}, Proposition 6.10(c)]
The disk moduli $\mathcal M_{1,l}^{main}(L,\beta_i+\alpha,\vec x)$ for every $i=0,\ldots, m^\prime-1$ and $\alpha\in H_2(\mathcal X,\mathbb Z)$ is compact. 
\end{proposition}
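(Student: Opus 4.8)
The plan is to establish compactness in two stages: first confine every (orbi-)disk under consideration to a fixed compact subset of $\mathcal X$ by a maximum-principle argument, and then apply Gromov compactness for bordered stable maps with image in that compact set. Fixing the class $\beta_i+\alpha$ fixes the symplectic area $\omega(\beta_i+\alpha)$, so all stable maps in $\mathcal M_{1,l}^{main}(L,\beta_i+\alpha,\vec x)$ have uniformly bounded energy; the only feature distinguishing this from the compact semi-Fano case is that $\mathcal X$ is non-compact, and this is precisely what the confinement stage is designed to control.

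First I would prove confinement using that $\mathcal X$ is toric. Its monomial coordinates include torus-invariant functions that are holomorphic on $\mathcal X$ but acquire poles along the divisor at infinity $D_\infty=\bar{\mathcal X}\setminus\mathcal X$, and finitely many of them exhaust $\mathcal X$ toward $D_\infty$. For any such holomorphic $f$ and any holomorphic (orbi-)disk $u:(D^2,\partial D^2)\to(\mathcal X,L)$, the composite $f\circ u$ is holomorphic on $D^2$; by the maximum modulus principle $\sup_{D^2}|f\circ u|$ is attained on $\partial D^2$, where $u(\partial D^2)\subset L$, giving $\sup_{D^2}|f\circ u|\le\sup_L|f|<\infty$ independently of $u$ (using compactness of $L$). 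Applying this to each confining monomial, and componentwise to every disk or sphere bubble of a stable map (each being holomorphic with boundary, if any, on $L$), shows that every stable map representing $\beta_i+\alpha$ has image in a fixed compact set $K\subset\mathcal X$ that is disjoint from $D_\infty$.

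Next I would invoke Gromov compactness for bordered stable orbifold maps in the Cho--Poddar and Fukaya--Oh--Ohta--Ono framework. Because the energy is bounded and, by the previous step, all maps in a given sequence have image in the fixed compact set $K$, the usual compactness theorem applies in a neighborhood of $K$ in $\mathcal X$: a subsequence converges in the Gromov topology to a stable bordered orbifold map whose image still lies in $K\subset\mathcal X$. Conservation of the relative homotopy class under Gromov convergence shows the limit represents $\beta_i+\alpha$ and carries the distinguished boundary marking and the prescribed interior orbifold markings of type $\vec x$, so it is again a point of $\mathcal M_{1,l}^{main}(L,\beta_i+\alpha,\vec x)$. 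The semi-Fano hypothesis enters to rule out pathological degenerations: it forces every holomorphic sphere to have non-negative Chern number and every basic (orbi-)disk to have non-negative Maslov index (namely $2$, respectively $2\on{age}(v)$, as recorded above), so that the Maslov/energy bookkeeping permits only finitely many bubble components and excludes components of negative area --- exactly what is needed for the limit to be an honest stable map in the same moduli problem.

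I expect the confinement stage to be the main obstacle. In the compact Fano case there is no escape-to-infinity to worry about, whereas here one must exploit the Calabi--Yau (hyperplane) structure of the stacky fan, equivalently the geometry of the Gross fibration defining $L$, to produce holomorphic functions for which the maximum-principle bound is both uniform along a Gromov-degenerating sequence and stable under passing to bubble components. Once confinement is secured, the remaining steps are the standard Gromov-compactness and minimal-Maslov-index arguments, adapted to orbi-disks by Cho--Poddar.
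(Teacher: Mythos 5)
The paper you are checking against does not actually prove this proposition: it is imported, with citation, from \cite{CCLT} (Proposition 6.10(c)), so the relevant comparison is with the argument there, which in the manifold case goes back to \cite{Auroux09} and \cite{CLL}. Your two-stage plan --- confine all stable (orbi-)disks in the class $\beta_i+\alpha$ to a fixed compact subset of $\mathcal X$ by the maximum principle, then apply Gromov compactness for bordered orbifold stable maps in the sense of Cho--Poddar \cite{CP} --- is that argument in outline. The genuine gap sits exactly at the step you yourself flag as ``the main obstacle'': you assert, but never establish, that finitely many global torus-invariant monomials exhaust $\mathcal X$. This is not automatic, and a single monomial does not suffice: for $\mathcal X=\mathbb C^3$ the function $z_1z_2z_3$ attached to the Calabi--Yau covector $\underline{v}$ has non-compact sublevel sets (they contain the coordinate planes), so the maximum principle applied to it alone confines nothing. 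What makes confinement work is the standing hypothesis, recalled in Section \ref{sec:compactification} from \cite[Setting 4.3]{CCLT}, that the coarse moduli space of $\mathcal X$ is \emph{semi-projective}: then the support $|\Sigma|$ is a convex full-dimensional cone and $\mathcal X$ is proper over its affinization $\on{Spec}H^0(\mathcal X,\mathcal O_{\mathcal X})$; the Calabi--Yau hyperplane condition makes $|\Sigma|$ strongly convex, so the monomials $z^m$ with $m\in|\Sigma|^\vee\cap M$ embed the affinization into some $\mathbb C^N$ and the resulting map $\mathcal X\rightarrow\mathbb C^N$ is proper. The maximum principle bounds each $|z^m|$ on every disk component by $\sup_L|z^m|<\infty$; on sphere components $z^m\circ u$ is constant, and that constant obeys the same bound only after propagating it through the nodes to a component with boundary on $L$ --- a connectedness step your parenthetical ``(each being holomorphic with boundary, if any, on $L$)'' glosses over. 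With these inputs made explicit, the preimage of the closed polydisk is the required compact set and your Stage 2 goes through.

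Two further corrections. First, the role you assign to the semi-Fano condition is spurious: compactness needs no positivity of Chern numbers or Maslov indices. Once energy is bounded and images are confined, Gromov compactness already yields a stable limit in the same class with the prescribed boundary and interior markings; finiteness of bubble components follows from stability plus the energy bound, and holomorphic components never have negative area. (Minimal Maslov index and semi-Fano enter when one wants the invariants $n_{1,l,\beta}$ themselves to be well defined, not for compactness of the moduli space.) Second, there is an alternative route worth recording, which uses only the intersection-number bookkeeping of Section \ref{sec:compactification}: regard a stable disk of class $\beta_i+\alpha$ as a stable disk in the compact orbifold $\bar{\mathcal X}$; since $D_\infty\cdot(\beta_i+\alpha)=0$, since $D_\infty$ pairs non-negatively with every effective class because $H_2^{\on{eff}}(\bar{\mathcal X})=H_2^{\on{eff}}(\mathcal X)\oplus\mathbb Z_{\geq 0}\bar{\beta}^\prime$, and since disk components are kept off $D_\infty$ by their Lagrangian boundary condition, positivity of intersections together with connectedness of the domain forces every component to be disjoint from $D_\infty$; hence the moduli space for $\mathcal X$ coincides with the corresponding moduli space for the compact target $\bar{\mathcal X}$, and compactness follows without any confinement analysis.
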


\begin{defn}
Let $\beta=\beta_i+\alpha\in \pi_2(\mathcal X,L)$ be a relative homotopy class,  for $i\in\{0,\ldots, m^\prime-1\}$ and $\alpha\in H_2(\mathcal X,\mathbb Z)$. The open orbifold Gromov--Witten invariant $n_{1,l,\beta}^{\mathcal X}([\on{pt}]_L;{\textbf 1}_{v_1},\ldots,{\textbf 1}_{v_l})\in \mathbb Q$ is defined to be the push-forward
\[
n_{1,l,\beta}^{\mathcal X}([\on{pt}]_L;{\textbf 1}_{v_1},\ldots,{\textbf 1}_{v_l}):=\on{ev}_{0*}([\mathcal M_{1,l}(L,\beta,\vec x)]^{vir})\in H_n(L,\mathbb Q)\cong \mathbb Q,
\]
where $\on{ev}_0:\mathcal M_{1,l}^{main}(L,\beta_i+\alpha,\vec x)\rightarrow L$ is evaluation at the boundary marked point; $[\on{pt}]_L\in H^n(L,\mathbb Q)$ is the point class of $L$; and ${\textbf 1}_{v_i}\in H^0(\mathcal X_{v_i},\mathbb Q)$ is the fundamental class of the twisted sector $\mathcal X_{v_i}$.
\end{defn}

One can consider the following generating functions of open Gromov--Witten invariants
\[
1+\delta_i^{\on{open}}:=\sum_{\alpha\in H_2^{\on{eff}}(\mathcal X)}\sum_{l\geq 0}\sum_{v_1,\ldots v_l\in \bbox(\Sigma)^{\age=1}}\frac{\prod_{i=1}^l \tau_{v_i}}{l!}n_{1,l,\beta_{i}+\alpha}^{\mathcal X}( [\on{pt}]_L,\textbf{1}_{v_1},\ldots,\textbf{1}_{v_l})q^{\alpha},
\]
where $\beta_{i}$ is a basic smooth disk class for some $i\in\{0,1,\ldots,m-1\}$;

\[
\tau_v+\delta_v^{\on{open}}:=\sum_{\alpha\in H_2^{\on{eff}}(\mathcal X)}\sum_{l\geq 0}\sum_{v_1,\ldots v_l\in \bbox(\Sigma)^{\age=1}}\frac{\prod_{i=1}^l \tau_{v_i}}{l!}n_{1,l,\beta_{v}+\alpha}^{\mathcal X}( [\on{pt}]_L,\textbf{1}_{v_1},\ldots,\textbf{1}_{v_l})q^{\alpha},
\]
where $\beta_v$ is a basic orbi-disk class corresponding to a box element $v\in \bbox(\Sigma)^{\age=1}$.

\subsection{Toric compactifications of toric Calabi--Yau orbifolds}\label{sec:compactification}

\begin{definition}[\cite{CCLT}, Definition 4.1]\label{defn-toric-CY}
A Gorenstein toric orbifold $\mathcal X$ is called Calabi--Yau if there exists a dual vector $\underline{v}\in M=N^\vee$ such that $(\underline{v},b_i)=1$ for all stacky vectors $b_i$.
\end{definition}

Let $\mathcal X$ be a toric Calabi--Yau orbifold associated to a stacky fan $(N, \Sigma,b_0,\ldots, b_{m-1})$. Then $\mathcal X$ is always simplicial. Following \cite[Setting 4.3]{CCLT}, we assume that the coarse moduli space of the toric Calabi--Yau orbifold $\mathcal X$ is semi-projective.

Let $\beta^\prime\in \pi_2(\mathcal X,L)$ be a basic disk class with Chern--Weil Maslov index $2$, where $L$ is a Lagrangian torus fiber of the moment map $\mu_0:\mathcal X\rightarrow M\otimes_{\mathbb Z}\mathbb R$. We have $\partial \beta^\prime=b_{i_0}\in N$ for some $i_0\in\{0,1,\ldots, m^\prime-1\}$.
The toric compactification $\bar{\mathcal X}$ of $\mathcal X$ is constructed in \cite[Construction 6.1]{CCLT} by adding a vector 
\[
b_{\infty}:=-b_{i_0}\in N.
\]
The stacky fan for the toric compactification is 
\[
(N, \bar{\Sigma},\{b_i\}_{i=0}^{m-1}\cup \{b_\infty\}),
\]
where $\bar{\Sigma}\subset N_{\mathbb R}$ is the smallest complete simplicial fan that contains $\Sigma$ and the ray $\mathbb R_{\geq 0}b_\infty\subset N_{\mathbb R}$. 
\begin{definition}
The toric compactification $\bar{\mathcal X}$ is defined as the toric orbifold associated to $\bar{\Sigma}$. That is, 
\[
\bar{\mathcal X}:=\mathcal X({\bar{\Sigma}}).
\]
The extra vectors $\{b_m,\ldots,b_{m^\prime-1}\}\subset N$ are chosen to be the same as that for $\mathcal X$.
\end{definition}
We further assume that $\bar{\mathcal X}$ is projective which means that $b_{i_0}\in N$ lies in the interior of the support $|\Sigma|$, see \cite[Proposition 6.5]{CCLT}.
\begin{remark}
The definition of the toric compatification $\bar{\mathcal X}$ of $\mathcal X$ depends on the class $\beta^\prime$.
\end{remark}
We have $\mathcal X\subset \bar{\mathcal X}$. We write $D_\infty:=\bar{\mathcal X}\setminus \mathcal X$ for the toric prime divisor corresponding to $b_\infty$.

Let $\beta_\infty\in \pi_2(\bar{\mathcal X},L)$ be the basic disk class corresponding to $b_\infty$. The class $\bar{\beta}^\prime:=\beta^\prime+\beta_\infty$ belongs to $H_2(\bar{\mathcal X},\mathbb Q)$ since $\partial (\beta^\prime+\beta_\infty)=b_0+b_\infty=0\in N$. We have $c_1(\bar{\mathcal X})\cdot \bar{\beta}^\prime=2$ and $D_\infty\cdot\bar{\beta}^\prime=1$. Moreover, we have the decompositions
\[
H_2(\bar{\mathcal X},\mathbb Q)=H_2(\mathcal X,\mathbb Q)\oplus \mathbb Q\bar{\beta}^\prime \text{ and } H_2^{\on{eff}}(\bar{\mathcal X})=H_2^{\on{eff}}(\mathcal X)\oplus \mathbb Z_{\geq 0}\bar{\beta}^\prime.
\]
For any $\alpha\in H_2^{\on{eff}}(\mathcal X)$, we have $D_\infty\cdot \alpha=0$ and $c_1(\bar{\mathcal X})\cdot \alpha=0$.
We write $\bar{\mathbb L},\bar{\mathbb K}$ and $\bar{\mathbb K}_{\on{eff}}$ for the counterparts for $\bar{\mathcal X}$ of the spaces $\mathbb L, \mathbb K$ and $\mathbb K_{\on{eff}}$. Then we have the following decompositions
\[
\bar{\mathbb L}=\mathbb L\oplus \mathbb Z d_\infty, \quad \bar{\mathbb K}=\mathbb K\oplus \mathbb Z d_\infty, \quad \bar{\mathbb K}_{\on{eff}}=\mathbb K_{\on{eff}}\oplus \mathbb Z_{\geq 0}d_\infty,
\]
where $d_\infty=e_{i_0}+e_\infty\in \bigoplus_{i=0}^{m^\prime-1}\mathbb Z e_i\oplus \mathbb Z e_\infty$.

Consider $\bar{\mathbb L}^\vee:=\Hom(\mathbb L,\mathbb Z)$, note that 
\[
\on{rank}(\bar{\mathbb L}^\vee)=r+1=m^\prime-n+1, \quad \on{rank}(H^2(\bar{\mathcal X}))=r^\prime+1=m-n+1.
\]
We choose an integral basis
\[
\{p_1,\ldots,p_r,p_\infty\}\subset \bar{\mathbb L}^\vee
\]
such that $p_a$ is in the closure of $\tilde {C}_{\bar{\mathcal X}}$ for all $a$. We further require that $\{p_{r^\prime+1},\ldots,p_r\}\subset \sum_{i=m}^{m^\prime-1}\mathbb R_{\geq 0}D_i$ so that the images $\{\bar{p}_1,\ldots \bar{p}_{r^\prime},\bar{p}_\infty\}$ of $\{p_1,\ldots,p_{r^\prime},p_\infty\}$ under the quotient map $\bar{\mathbb L}^\vee\otimes\mathbb Q\rightarrow H^2(\bar{\mathcal X},\mathbb Q)$ form a nef basis of $H^2(\bar{\mathcal X},Q)$ and $\bar{p}_a=0$ for $a\in\{r^\prime+1,\ldots,r\}$.

We define a matrix $(m_{ia})$ by
\[
D_i=\sum_{a\in\{1,\ldots,r\}\cup\{\infty\}} m_{ia}p_a, \quad m_{ia}\in \mathbb Z.
\]
Let $\bar {D}_i$ be the image of $D_i$ under $\bar{\mathbb L}^\vee\otimes \mathbb Q\rightarrow H^2(\bar{\mathcal X},\mathbb Q)$. Then for $i\in\{0,\ldots,m-1,\infty\}$, the toric divisor $\bar{D}_i$ is given by
\[
\bar{D}_i=\sum_{a\in\{1,\ldots,r^\prime\}\cup\{\infty\}}m_{ia}\bar{p}_{a}.
\]
For $m\leq j\leq m^\prime-1$, we have
\[
\bar{D}_j=0.
\]

By \cite[Proposition 6.3]{CCLT}, the toric orbifold $\bar{\mathcal X}$ is semi-Fano.

\section{Computation of relative invariants}\label{sec:compute-rel-inv}

The goal of this section is to compute the following generating functions of relative invariants
\begin{align}\label{rel-gen-fun}
1+\delta_i^{\on{rel}}:=\sum_{\alpha\in H_2^{\on{eff}}(\mathcal X)}\sum_{l\geq 0}\sum_{v_1,\ldots v_l\in \bbox(\Sigma)^{\age=1}}\frac{\prod_{i=1}^l \tau_{v_i}}{l!}\left\langle [\on{pt}]_{D_\infty},\textbf{1}_{\bar{v}_1},\ldots,\textbf{1}_{\bar{v}_l}\right\rangle_{0,1+l, \bar{\beta}^\prime+\alpha}^{(\bar{\mathcal X},D_\infty)}q^{\alpha},
\end{align}
where $\bar{\beta}^\prime:=\beta^\prime+\beta_\infty$, $\beta^\prime=\beta_{i}$ is a basic smooth disk class for some $i\in\{0,1,\ldots,m-1\}$, and the toric compactification $\bar{\mathcal X}$ is constructed using $\beta_i$;
\begin{align}\label{rel-gen-fun-twisted}
\tau_v+\delta_v^{\on{rel}}:=\sum_{\alpha\in H_2^{\on{eff}}(\mathcal X)}\sum_{l\geq 0}\sum_{v_1,\ldots v_l\in \bbox(\Sigma)^{\age=1}}\frac{\prod_{i=1}^l \tau_{v_i}}{l!}\left\langle [\on{pt}]_{D_\infty},\textbf{1}_{\bar{v}_1},\ldots,\textbf{1}_{\bar{v}_l}\right\rangle_{0,1+l, \bar{\beta}^\prime+\alpha}^{(\bar{\mathcal X},D_\infty)}q^{\alpha},
\end{align}
where $\bar{\beta}^\prime:=\beta^\prime+\beta_\infty$, $\beta^\prime=\beta_{v}$ is a basic orbi-disk class corresponding to a box element $v\in \bbox(\Sigma)^{\age=1}$, and the toric compactification $\bar{\mathcal X}$ is constructed using $\beta_v$.

\subsection{Relative invariants via mirror theorem}
Let $\bar{\mathcal X}$ be the toric compactification of $\mathcal X$ corresponding to a basic (orbi-)disk class $\beta_{i_0}$ for some $i_0\in\{0,\ldots, m^\prime-1\}$. We consider the relative Gromov--Witten theory of $(\bar{\mathcal X},D_\infty)$. The $J$-function of $(\bar{\mathcal X},D_\infty)$ can be written as
\[
J_{(\bar{\mathcal X},D_\infty)}(\tau,z):=e^{\tau_{0,2}/z}\left(1+\sum_{\substack{(l, d)\neq (0,0)\\d\in H_2^{\on{eff}}(\bar{\mathcal X})}}\sum_{\alpha}\frac{q^{d}}{l!}\left\langle \frac{\phi_\alpha}{z(z-\bar{\psi})},\tau_{tw},\ldots, \tau_{tw}\right\rangle_{0,1+l, d}^{(\bar{\mathcal X},D_\infty)}\phi^{\alpha}\right),
\]
where 
\[
\tau_{0,2}\in H^2(\bar{\mathcal X})\text{ and } \tau_{tw}=\sum_{v\in \on{Box}(\Sigma)^{\on{age}=1}}\tau_v\textbf{1}_{\bar{v}}. 
\]
Note that the only relative marking is the first marking, $\{\phi_\alpha\}$ and $\{\phi^\alpha\}$ are dual basis of $i^*(H^*(\bar{\mathcal X}))$, where $i:D_\infty\hookrightarrow \bar{\mathcal X}$ is the inclusion map.

We want to extract invariants of the form
\[
\left\langle [\on{pt}]_{D_\infty},\textbf{1}_{\bar{v}_1},\ldots,\textbf{1}_{\bar{v}_l}\right\rangle_{0,1+l, \bar{\beta}}^{(\bar{\mathcal X},D_\infty)},
\]
where $\bar{\beta}=\bar{\beta}^\prime+\alpha$ for some $\alpha\in H_2^{\on{eff}}(\mathcal X)$. Note that the first marking is the only relative marking and $[\on{pt}]_{D_\infty}\in H^{2n-2}(D_\infty)$ is the point class of the divisor $D_\infty$.

Expanding the $J$-function into a power series in $1/z$ yields
\begin{align*}
e^{\tau_{0,2}/z}\left(1+\sum_{n, d}\sum_{\alpha}\frac{q^{d}}{n!}\sum_{k\geq 0}\left\langle \phi_\alpha\bar{\psi}^k,\tau_{tw},\ldots,\tau_{tw}\right\rangle_{0,1+l, d}^{(\bar{\mathcal X},D_\infty)}\frac{\phi^{\alpha}}{z^{k+2}}\right).
\end{align*}
Since there is no descendant insertions, we only consider the terms with $k=0$. Therefore, the invariants that we need are in the coefficient of the $1/z^2$-term of $J_{(\bar{\mathcal X},D_\infty)}$ that takes values in $H^0(D_\infty)$. On the other hand, invariants in the coefficient of the $1/z^2$-term of $J_{(\bar{\mathcal X},D_\infty)}$ that takes values in $H^0(D_\infty)$ must satisfy the virtual dimension constraint
\begin{align}\label{equ:vir-dim}
(\dim_{\mathbb C}\bar{\mathcal X}-3)+c_1(\bar{\mathcal X})\cdot d-D_\infty\cdot d+l+1=\sum_{i=1}^l \age(v_i)+\dim_{\mathbb C}(D_\infty)
\end{align}
Since $\dim_{\mathbb C}\bar{\mathcal X}=\dim_{\mathbb C}(D_\infty)+1$ and we choose $v_i\in \bbox(\Sigma)^{\age=1}$, the virtual dimension constraint (\ref{equ:vir-dim}) becomes
\[
c_1(\bar{\mathcal X})\cdot d-D_\infty\cdot d=1.
\]
Recall that $d\in H_2^{\on{eff}}(\bar{\mathcal X})=H_2^{\on{eff}}(\mathcal X)\oplus \mathbb Z_{\geq 0}\bar{\beta}^\prime.$ Moreover, $c_1(\bar{\mathcal X})\cdot \alpha=0$ and $D_\infty\cdot \alpha=0$ for $\alpha\in H_2^{\on{eff}}(\mathcal X)$; $c_1(\bar{\mathcal X})\cdot \bar{\beta}^\prime=2$ and $D_\infty\cdot\bar{\beta}^\prime=1$. Therefore ,we must have $d=\bar{\beta}^\prime+\alpha$ for some $\alpha\in H_2^{\on{eff}}(\mathcal X)$.
So we conclude that the coefficient of the $1/z^2$-term of $J_{(\bar{\mathcal X},D_\infty)}$ that takes values in $H^0(D_\infty)$ consists exactly the form of invariants that we want to extract.

Then, we turn our attention to the $I$-function of $(\bar{\mathcal X},D_\infty)$.
\begin{align*}
&I_{(\bar{\mathcal X},D_\infty)}(y,z)\\
=& e^{t/z}\sum_{d\in \bar{\mathbb K}_{\on{eff}}}y^{d}\left(\prod_{i=0}^{m^\prime-1}\frac{\prod_{a\leq 0,\langle a\rangle=\langle D_i\cdot d\rangle}(\bar{D}_i+az)}{\prod_{a \leq D_i\cdot d,,\langle a\rangle=\langle D_i\cdot d\rangle}(\bar{D}_i+az)}\right)\frac{\prod_{a\leq 0}(\bar{D}_\infty+az)}{\prod_{a \leq D_\infty\cdot d}(\bar{D}_\infty+az)}\frac{\prod_{a\leq D_\infty\cdot d-1}(\bar{D}_\infty+az)}{\prod_{a \leq 0}(\bar{D}_\infty+az)}\textbf{1}_{v(d)}\\
=& e^{t/z}\sum_{d\in \bar{\mathbb K}_{\on{eff}}}y^{d}\left(\prod_{i=0}^{m^\prime-1}\frac{\prod_{a\leq 0,\langle a\rangle=\langle D_i\cdot d\rangle}(\bar{D}_i+az)}{\prod_{a \leq D_i\cdot d,\langle a\rangle=\langle D_i\cdot d\rangle}(\bar{D}_i+az)}\right)\frac{1}{\bar{D}_\infty+(D_\infty\cdot d)z}\textbf{1}_{v(d)}\\
=&1+\frac{\tau(y)}{z}+h.o.t,
\end{align*}
where $\tau(y)$ is a function with values in $i^*(H^2(\bar{\mathcal X}))\subset H^2(D_\infty)$. The toric mirror map for $\bar{\mathcal X}$ is defined to be the map $y\mapsto q(y):=\exp\tau(y)$. Note that if $D_\infty\cdot d=0$, then the factor $\frac{1}{D_\infty+(D_\infty\cdot d)z}=1$ because 
\[
\frac{\prod_{a\leq D_\infty\cdot d-1}(\bar{D}_\infty+az)}{\prod_{a \leq 0}(\bar{D}_\infty+az)}=1
\]
under Convention \ref{conv-1}.

The toric mirror theorem for the pair is the following
\[
J_{(\bar{\mathcal X},D_\infty)}(\tau(y),z)=I_{(\bar{\mathcal X},D_\infty)}(y,z).
\]

We consider the coefficient of the $1/z^2$-term of $I_{(\bar{\mathcal X},D_\infty)}(y,z)$ that takes values in $H^0(D_\infty)$. Consider the expansion of the factor
\[
\frac{\prod_{a\leq 0, \langle a\rangle=\langle D_i\cdot d\rangle}(\bar{D}_i+az)}{\prod_{a \leq D_i\cdot d, \langle a\rangle=\langle D_i\cdot d\rangle}(\bar{D}_i+az)}.
\]
We write
\[
\frac{1}{\bar{D}_i+az}=\frac{1}{az(1+\bar{D}_i/az)}
\]
and expand it as $1/z$-series. For terms with values in $H^0(D_\infty)$, we need 
\[
v(d)=0, \text{that is, } \textbf{1}_{v(d)}=\textbf{1}\in H^0(\bar{\mathcal X}).
\]
Therefore, we must have
\[
D_i\cdot d\in \mathbb Z, \text{ for } i\in\{0,\ldots, m^\prime-1\}\cup\{\infty\}.
\]
For $i\in \{0,\ldots,m^\prime-1\}$, there are three possibilities:
\begin{enumerate}
\item $D_i\cdot d>0$: it contributes the factor $\frac{1}{(D_i\cdot d)!z^{D_i\cdot d}}$.
\item $D_i\cdot d=0$: it contributes $1$.
\item $D_i\cdot d<0$: it contributes a factor of $\bar{D}_i$, which is not allowed.
\end{enumerate}

Note that $\frac{1}{\bar{D}_\infty+(D_\infty\cdot d)z}$ contributes a factor $\frac{1}{(D_\infty\cdot d)z}$ if $D_\infty\cdot d>0$. Otherwise, the contribution is $1$. Hence the part of the $1/z^2$ that takes values in $H^0(D_\infty)$ is
\begin{enumerate}
\item $\sum_d \frac{y^d}{(\prod_{i=0}^{m^\prime-1}(D_i\cdot d)!)D_\infty\cdot d}$, if $D_\infty\cdot d>0$;
\item $\sum_d \frac{y^d}{(\prod_{i=0}^{m^\prime-1}(D_i\cdot d)!)}$, if $D_\infty\cdot d=0$.
\end{enumerate}
where the sum is over all $d$ such that 
\begin{enumerate}
\item  if $D_\infty\cdot d>0$, then $\left(\sum_{i=0}^{m^\prime-1}D_i\cdot d\right)+1=2$ and $D_i\cdot d\geq 0$ for all $i\in\{0,1,\ldots, m^\prime-1\}\cup\{\infty\}$;
\item if $D_\infty\cdot d=0$, then $\sum_{i=0}^{m^\prime-1}D_i\cdot d=2$ and $D_i\cdot d\geq 0$ for all $i\in\{0,1,\ldots, m^\prime-1\}$.
\end{enumerate}
For case $(1)$, 
we have $D_\infty\cdot d>0$, $D_j\cdot d=1$, for exactly one $j\in \{0,\ldots, m^\prime-1\}$ and $D_i\cdot d=0$ for all other $i$. Therefore, we have $j=i_0$ and $D_\infty\cdot d=1$. We simply have $y^{d_{\infty}}$. Note that there are actually two possibilities: if $0\leq i_0 \leq m-1$, then $d_\infty=e_{i_0}+e_\infty=\bar{\beta}^\prime\in H_2(\bar{\mathcal X},\mathbb Q)$; if $m\leq i_0\leq m^\prime-1$, then $d_\infty$ is not a class in  $ H_2(\bar{\mathcal X},\mathbb Q)$.

Case $(2)$ will not happen, since $\sum_{i=0}^{m^\prime-1}D_i\cdot d=2$ and $D_i\cdot d\geq 0$ for all $i=0,1,\ldots, m^\prime-1$ would imply $D_\infty\cdot d>0$. Indeed, since we can not have $D_i\cdot d=0$ for all but one $i$, we must be in the case that there are exactly two indices $j_1,j_2$ such that $D_{j_1}\cdot d=D_{j_2}\cdot d=1$ and $D_i\cdot d=0$, for $i\neq j_1,j_2$. Since $b_\infty$ lies in the half-space of $N_{\mathbb R}\oplus \mathbb R$ which is opposite to the half-space containing all the other vectors $b_0,\ldots,b_{m^\prime-1}$, we must have $\infty\in\{j_1,j_2\}$. Hence $D_\infty\cdot d>0$. It is a contradiction.

Therefore, by the mirror theorem for toric pairs, we obtain the following
\begin{proposition}\label{prop:rel-inv}
The following equality holds:
\[
y^{d_\infty}=\sum_{\alpha\in H_2^{\on{eff}}(\mathcal X)}\sum_{l\geq 0}\sum_{v_1,\ldots v_l\in \bbox(\Sigma)^{\age=1}}\frac{\prod_{i=1}^l \tau_{v_i}}{l!}\left\langle [\on{pt}]_{D_\infty},\textbf{1}_{\bar{v}_1},\ldots,\textbf{1}_{\bar{v}_l}\right\rangle_{0,1+l, \bar{\beta}^\prime+\alpha}^{(\bar{\mathcal X},D_\infty)}q^{ \bar{\beta}^\prime+\alpha},
\]
\end{proposition}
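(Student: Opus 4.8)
The plan is to read off Proposition \ref{prop:rel-inv} directly from the toric mirror theorem for the pair $(\bar{\mathcal X},D_\infty)$, namely $J_{(\bar{\mathcal X},D_\infty)}(\tau(y),z)=I_{(\bar{\mathcal X},D_\infty)}(y,z)$, by comparing the coefficient of a single, carefully isolated term on each side. The entire argument reduces to matching the part of the $1/z^2$-coefficient that takes values in $H^0(D_\infty)$. On the $J$-function side, the virtual dimension constraint (\ref{equ:vir-dim}) has already been shown to force $c_1(\bar{\mathcal X})\cdot d-D_\infty\cdot d=1$ and hence $d=\bar{\beta}^\prime+\alpha$ with $\alpha\in H_2^{\on{eff}}(\mathcal X)$; since there are no descendants the relevant $\bar\psi^k$ term is $k=0$, so the $H^0(D_\infty)$-valued part of the $1/z^2$-coefficient of $J_{(\bar{\mathcal X},D_\infty)}$ is exactly the generating series $\sum_{\alpha,l,v_i}\frac{\prod \tau_{v_i}}{l!}\langle [\on{pt}]_{D_\infty},\textbf{1}_{\bar v_1},\ldots,\textbf{1}_{\bar v_l}\rangle^{(\bar{\mathcal X},D_\infty)}_{0,1+l,\bar\beta^\prime+\alpha}\,q^{\bar\beta^\prime+\alpha}$ appearing on the right of the claimed identity.

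First I would extract the matching coefficient on the $I$-function side. Using the analysis already carried out in the excerpt, the $H^0(D_\infty)$-valued part of $I_{(\bar{\mathcal X},D_\infty)}$ requires $v(d)=0$, i.e. $D_i\cdot d\in\mathbb Z$ for all $i\in\{0,\ldots,m^\prime-1\}\cup\{\infty\}$, and a nonnegativity condition $D_i\cdot d\ge 0$ (a negative intersection would contribute a factor of $\bar D_i$, which lies outside $H^0$). The three cases enumerated show that each $d$ with $D_i\cdot d>0$ contributes $\frac{1}{(D_i\cdot d)!\,z^{D_i\cdot d}}$ and the $D_\infty$-factor contributes $\frac{1}{(D_\infty\cdot d)z}$ when $D_\infty\cdot d>0$. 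Collecting the $1/z^2$ terms leaves the two cases $(1)$ and $(2)$ already displayed. I would then invoke the case analysis verbatim: case $(2)$ is impossible because $b_\infty$ lies in the half-space opposite to all other $b_i$, forcing $\infty$ into the support of any degree-$2$ solution and hence $D_\infty\cdot d>0$, a contradiction; and in case $(1)$ the condition $\big(\sum_{i=0}^{m^\prime-1}D_i\cdot d\big)+1=2$ together with nonnegativity forces exactly one $j$ with $D_j\cdot d=1$, which must be $j=i_0$, giving $D_\infty\cdot d=1$ and $d=d_\infty$. Thus the entire $H^0(D_\infty)$-valued part of the $1/z^2$-coefficient of $I_{(\bar{\mathcal X},D_\infty)}$ collapses to the single monomial $y^{d_\infty}$.

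Equating the two extracted coefficients via Theorem \ref{thm:mirror-toric-orbi-pair} yields the asserted equality, after identifying $q^{\bar\beta^\prime+\alpha}$ on the $J$-side with the corresponding Novikov monomial produced by the mirror map on the $I$-side. The main subtlety, and the step I would treat most carefully, is the bookkeeping of the change of variables: the $J$-function is evaluated at $\tau(y)$ with $q(y)=\exp\tau(y)$, so one must check that the $H^0(D_\infty)$-valued $1/z^2$-coefficient is not contaminated by contributions from the mirror map $\tau(y)$ itself. Here the key point is that $\tau(y)$ takes values in $i^*(H^2(\bar{\mathcal X}))\subset H^2(D_\infty)$, so the prefactor $e^{\tau_{0,2}/z}$ and the mirror map only shift the degree-two part and the Novikov variables; since the relevant insertions are all of age one and the dimension constraint pins down $d=\bar\beta^\prime+\alpha$ uniquely, the $H^0(D_\infty)$-component receives exactly the invariants in the generating function and nothing more. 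Once this is verified, the two sides agree term by term and Proposition \ref{prop:rel-inv} follows.
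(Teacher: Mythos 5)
Your proposal is correct and follows essentially the same route as the paper: both extract the $H^0(D_\infty)$-valued part of the $1/z^2$-coefficient from each side of the mirror theorem $J_{(\bar{\mathcal X},D_\infty)}(\tau(y),z)=I_{(\bar{\mathcal X},D_\infty)}(y,z)$, using the virtual dimension constraint to pin down $d=\bar{\beta}^\prime+\alpha$ on the $J$-side and the case analysis (with the half-space argument ruling out $D_\infty\cdot d=0$) to collapse the $I$-side to the single monomial $y^{d_\infty}$. Your additional check that the prefactor $e^{\tau_{0,2}/z}$ and the mirror map cannot contaminate the $H^0(D_\infty)$-component is a worthwhile point of care that the paper leaves implicit.
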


Therefore, in order to compute these invariants, we need to compute the mirror map.
\subsection{Toric mirror map}
Toric mirror map for $(\bar{\mathcal X},D_\infty)$ is given by the coefficient of the $1/z$-term of the $I$-function $I_{(\bar{\mathcal X},D_\infty)}(y,z)$ that takes values in $H^2(\mathcal X)$. We will see that it coincides with the toric mirror map for $\bar{\mathcal X}$.

\begin{lemma}
The product factor can be written as
\[
\prod_{i=0}^{m^\prime-1}\frac{\prod_{a\leq 0, \langle a\rangle=\langle D_i\cdot d\rangle}(\bar{D}_i+az)}{\prod_{a \leq D_i\cdot d,  \langle a\rangle=\langle D_i\cdot d\rangle}(\bar{D}_i+az)}=z^{-\hat{\rho}(\mathcal X)\cdot d-\age(v(d))}\prod_{i=0}^{m^\prime-1}\frac{\prod_{a\leq 0, \langle a\rangle=\langle D_i\cdot d\rangle}(\bar{D}_i/z+a)}{\prod_{a \leq D_i\cdot d,  \langle a\rangle=\langle D_i\cdot d\rangle}(\bar{D}_i/z+a)}
\]
\begin{proof}
This is a direct computation.
\end{proof}

\end{lemma}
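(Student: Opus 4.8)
The plan is to track how each factor in the hypergeometric product scales under the substitution $\bar{D}_i \mapsto \bar{D}_i/z$, and to collect all the resulting powers of $z$ into a single exponent. First I would count, separately for each $i \in \{0,\ldots,m^\prime-1\}$, the net number of linear factors $(\bar{D}_i + az)$ that appear: the denominator contributes those $a$ with $\langle a\rangle = \langle D_i\cdot d\rangle$ and $a \leq D_i\cdot d$, while the numerator contributes those with $a \leq 0$. Pulling $z$ out of each factor via $\bar{D}_i + az = z(\bar{D}_i/z + a)$, the total power of $z$ contributed by the $i$-th factor is the difference (number of numerator factors) minus (number of denominator factors). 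The key arithmetic observation is that, for a fixed fractional part, the number of integers $a$ with $\langle a\rangle = \langle D_i\cdot d\rangle$ lying in the range $(0, D_i\cdot d]$ is exactly $\lfloor D_i\cdot d\rfloor$ plus a correction depending on whether $\langle D_i\cdot d\rangle = 0$; more precisely, the signed count evaluates to $\lceil D_i\cdot d\rceil$ when one accounts for the fractional part, so that the $z$-exponent from the $i$-th factor is $-\lceil D_i\cdot d\rceil$.

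Next I would sum these exponents over all $i$ and identify the result with the quantity in the statement. Summing $-\lceil D_i\cdot d\rceil$ over $i=0,\ldots,m^\prime-1$, I would split each ceiling as $\lceil D_i\cdot d\rceil = D_i\cdot d + \langle -D_i\cdot d\rangle$. The first pieces sum to $\left(\sum_i D_i\right)\cdot d = \hat{\rho}(\mathcal X)\cdot d$ by the definition of $\hat{\rho}(\mathcal X) = \sum_{i=0}^{m^\prime-1}D_i$. The second pieces sum to $\sum_i \langle -D_i\cdot d\rangle$, which by the computation of $v(d)$ recorded in the excerpt (namely $v(d) = \sum_i \langle -D_i\cdot d\rangle\, b_i$) is exactly $\age(v(d))$, since the age of a box element is the sum of its fractional coefficients. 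Therefore the total $z$-exponent is $-\hat{\rho}(\mathcal X)\cdot d - \age(v(d))$, matching the prefactor in the lemma, and the remaining product with $\bar{D}_i$ replaced by $\bar{D}_i/z$ is precisely the stated right-hand factor.

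The main obstacle, and the step I would carry out most carefully, is the bookkeeping for the ranges of $a$ when $\langle D_i\cdot d\rangle = 0$ versus $\langle D_i\cdot d\rangle \neq 0$, together with the sign conventions when $D_i\cdot d < 0$. When $D_i\cdot d$ is a negative non-integer or negative integer, the roles of numerator and denominator effectively swap, and one must verify that the signed factor count still produces $-\lceil D_i\cdot d\rceil$ rather than $-\lfloor D_i\cdot d\rfloor$; the appearance of the \emph{ceiling} (consistent with the definition of $v(d)$ in the excerpt) rather than the floor is the delicate point. I would confirm this by checking the two extreme cases $D_i\cdot d\in\mathbb Z$ and $0<\langle D_i\cdot d\rangle<1$ explicitly, and then invoke the identity $\lceil x\rceil = x + \langle -x\rangle$ uniformly. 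Once the per-factor exponent is pinned down as $-\lceil D_i\cdot d\rceil$, the rest is the formal manipulation described above, which the author rightly calls ``a direct computation.''
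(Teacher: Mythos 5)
Your proposal is correct and fills in exactly the ``direct computation'' that the paper omits: each factor contributes $z^{-\lceil D_i\cdot d\rceil}$ after writing $\bar{D}_i+az=z(\bar{D}_i/z+a)$, and the identity $\lceil x\rceil=x+\langle -x\rangle$ together with $\hat{\rho}(\mathcal X)=\sum_i D_i$ and $\age(v(d))=\sum_i\langle -D_i\cdot d\rangle$ yields the stated exponent. The only quibble is terminological --- the $a$'s with $\langle a\rangle=\langle D_i\cdot d\rangle\neq 0$ are rationals rather than integers --- but your counts, including the delicate negative and integer cases, are all right.
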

Recall that we also have the factor
\[
\frac{1}{\bar{D}_\infty+(D_\infty\cdot d)z}=\frac{1}{(D_\infty\cdot d)z(1+\bar{D}_\infty/((D_\infty\cdot d)z))}, \text{ if } D_\infty\cdot d>0.
\]
We want to extract the coefficient of $z^{-1}$-term for the relative $I$-function $I_{(\bar{\mathcal X},D_\infty)}(y,z)$, there are two possibilities
\begin{enumerate}
\item if $D_\infty\cdot d>0$,  we need 
\[
\hat{\rho}(\mathcal X)\cdot d+\age(v(d))=0,\quad \text{ and } D_i\cdot d\geq 0, \text{ for all } 0\leq i\leq m^\prime-1
\]
It means that $D_i\cdot d=0$ for all $ 0\leq i\leq m^\prime-1$, but $D_\infty\cdot d=1$. Such a class $d\in \bar{\mathbb K}_{\on{eff}}$ does not exist.
\item $D_\infty\cdot d=0$, then we have $d\in \mathbb K_{\on{eff}}$. Therefore, we have 
\[
\sum_{i=0}^{m^\prime-1}D_i\cdot d=0.
\]
If $v(d)=0$, then,
\[
\#\{i:D_i\cdot d<0, 0\leq i\leq m-1\}=1,
\]
where $\#\{i:\cdots\}$ means the number of $i$ that satisfies the condition.
Hence the contribution to the coefficient of $z^{-1}$ of the relative $I$-function $I_{(\bar{\mathcal X},D_\infty)}(y,z)$ is
\[
\sum_{j=0}^{m-1}g_j(y)\bar{D}_j,
\]
where 
\begin{align}\label{g-j-smooth}
g_j(y):=\sum_{\substack{d:c_1(\mathcal X)\cdot d=0\\D_j\cdot d<0\\D_i\cdot d\geq 0, \forall i\neq j\\v(d)=0}}\frac{(-1)^{D_j\cdot d-1}(-D_j\cdot d-1)!}{\prod_{i\neq j}(D_i\cdot d)!}y^d,
\end{align}
for $0\leq j \leq m-1$.

If $v(d)\neq 0$, then 
\[
\#\{i:D_i\cdot d<0, 0\leq i\leq m-1\}=0, \text{ and } \age v(d)=1.
\]
By Assumption \ref{assumption-extra-vectors}, $v(d)=b_j$ for some $m\leq j\leq m^\prime-1$. Hence the contribution to the coefficient of $z^{-1}$ of the relative $I$-function $I_{(\bar{\mathcal X},D_\infty)}(y,z)$ is
\[
\sum_{j=m}^{m^\prime-1}g_j(y)\textbf{1}_{b_j},
\]
where
\begin{align}\label{g-j-orbi}
g_j(y):=\sum_{\substack{d:c_1(\mathcal X)\cdot d=0\\D_i\cdot d\geq 0, \forall i\\v(d)=b_j}}\prod_{i=0}^{m^\prime-1}\frac{\prod_{a\leq 0, \langle a\rangle=\langle D_i\cdot d\rangle}(a)}{\prod_{a \leq D_i\cdot d,  \langle a\rangle=\langle D_i\cdot d\rangle}(a)}y^d,
\end{align}
for $m\leq j \leq m^\prime-1$.
\end{enumerate}

The coefficient of the $1/z$-term in the relative $I$-function can be expressed as 
\[
\sum_{a=1}^{r^\prime}(\bar{p}_a\log y_a)+\bar{p}_{\infty}\log y_\infty+\sum_{j=0}^{m-1}g_j(y)\bar{D}_j+\sum_{j=m}^{m^\prime-1}g_j(y)\textbf{1}_{b_j}
\]

The coefficient of the $1/z$-term of the $J$-function is given by
\[
\sum_{a=1}^{r^\prime}\bar{p}_{a}\log q_a+\bar{p}_{\infty}\log q_\infty+\sum_{i=m}^{m^\prime-1}\tau_{b_j}{\textbf 1}_{b_j}
\]

The toric mirror map for $(\bar{\mathcal X},D_\infty)$ is obtained by comparing the $1/z$-coefficients of the $J$-function and $I$-function. Therefore, we have
\[
\log q_a=\log y_a+\sum_{j=0}^{m-1} m_{ja}g_j(y), \quad a\in\{1,\ldots, r^\prime\}\cup\{\infty\},
\]
\[
\tau_{b_j}=g_j(y), j\in\{m\ldots,m^\prime-1\}.
\]

Let us take a closer look at the case when $a=\infty$. When $\beta^\prime=\beta_{i_0}$ is a basic smooth disk class, we have $m_{i_0 \infty}=1, m_{\infty \infty}=1$ and $m_{j\infty}=0$ for $j\neq i_0,\infty$. We have
\[
\log q_\infty=\log y_\infty+g_{i_0(y)}
\]
When $\beta^\prime=\beta_{v_{j_0}}$ is a basic orbi-disk class, we have $m_{j_0\infty}=1, m_{\infty\infty}=1$ and $m_{j\infty}=0$ for $j\neq j_0,\infty$. In particular, it means that $m_{j\infty}=0$ for all $j\in\{0,\ldots,m-1\}$. We simply have
\[
\log q_\infty=\log y_\infty.
\]

We summarize our computation in the following proposition.
\begin{proposition}\label{prop:rel-toric-mirror-map}
The toric mirror map for $(\bar{\mathcal X},D_\infty)$ is of the following form. 
For $a\in\{1,\ldots, r^\prime\}$,
\[
\log q_a=\log y_a+\sum_{j=0}^{m-1} m_{ja}g_j(y).
\]
For $j\in\{m\ldots,m^\prime-1\}$,
\[
\tau_{b_j}=g_j(y).
\]
For $a=\infty$, when $\beta^\prime=\beta_{i_0}$ is a basic smooth disk class, we have 
\[
\log q_\infty=\log y_\infty+g_{i_0}(y);
\]
when $\beta^\prime=\beta_{v_{j_0}}$ is a basic orbi-disk class, we have 
\[
\log q_\infty=\log y_\infty.
\]
\end{proposition}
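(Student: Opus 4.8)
The plan is to extract the mirror map directly by comparing the coefficients of the $z^{-1}$-term of $I_{(\bar{\mathcal X},D_\infty)}(y,z)$ and $J_{(\bar{\mathcal X},D_\infty)}(\tau,z)$, using Theorem \ref{thm:mirror-toric-orbi-pair}. The general-index cases $a\in\{1,\ldots,r^\prime\}$ and $j\in\{m,\ldots,m^\prime-1\}$ have already been settled in the analysis preceding the statement: the $z^{-1}$-coefficient of the $I$-function taking values in $H^2(\mathcal X)$ equals
\[
\sum_{a=1}^{r^\prime}(\bar{p}_a\log y_a)+\bar{p}_{\infty}\log y_\infty+\sum_{j=0}^{m-1}g_j(y)\bar{D}_j+\sum_{j=m}^{m^\prime-1}g_j(y)\textbf{1}_{b_j},
\]
and equating this with the corresponding $z^{-1}$-coefficient of the $J$-function, after substituting $\bar{D}_j=\sum_a m_{ja}\bar{p}_a$ and matching coefficients of the independent classes $\{\bar{p}_a\}$ and $\{\textbf{1}_{b_j}\}$, yields the stated formulas. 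So the only genuinely new content is the analysis of the $a=\infty$ coefficient, i.e.\ the coefficient of $\bar{p}_\infty$.

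First I would isolate the $\bar{p}_\infty$-coefficient on both sides. On the $J$-function side this is simply $\log q_\infty$. On the $I$-function side it is $\log y_\infty$ plus the contribution from $\sum_{j=0}^{m-1}g_j(y)\bar{D}_j$ to the $\bar{p}_\infty$ direction, namely $\sum_{j=0}^{m-1}m_{j\infty}\,g_j(y)$, where the $m_{j\infty}$ are the coefficients expressing $D_j$ in the chosen integral basis $\{p_1,\ldots,p_r,p_\infty\}$ of $\bar{\mathbb L}^\vee$. Thus the entire statement reduces to computing the integers $m_{j\infty}$ for $j\in\{0,\ldots,m-1\}$ in the two cases.

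Next I would compute these $m_{j\infty}$ using the structure of the compactification from Section \ref{sec:compactification}. The key geometric input is that $\bar{\beta}^\prime=\beta^\prime+\beta_\infty$ satisfies $D_\infty\cdot\bar{\beta}^\prime=1$ and $\partial\bar{\beta}^\prime=b_{i_0}+b_\infty=0$, together with the decomposition $\bar{\mathbb L}=\mathbb L\oplus\mathbb Z d_\infty$ with $d_\infty=e_{i_0}+e_\infty$. Dualizing, the basis vector $p_\infty$ is characterized by its pairings with the $D_i$, and the pairing $D_j\cdot d_\infty=m_{j\infty}$ is read off from $d_\infty=e_{i_0}+e_\infty$: one gets a nonzero contribution precisely when $j$ equals the index $i_0$ defining the compactifying ray. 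Concretely, $m_{i_0\infty}=1$ and $m_{\infty\infty}=1$, while $m_{j\infty}=0$ for $j\neq i_0,\infty$. When $\beta^\prime=\beta_{i_0}$ is a \emph{smooth} disk class, $i_0\in\{0,\ldots,m-1\}$ lies in the range of the sum $\sum_{j=0}^{m-1}m_{j\infty}g_j(y)$, so this sum collapses to the single term $g_{i_0}(y)$, giving $\log q_\infty=\log y_\infty+g_{i_0}(y)$. When $\beta^\prime=\beta_{v_{j_0}}$ is an \emph{orbi}-disk class, the compactifying ray is built from a box element and $i_0\in\{m,\ldots,m^\prime-1\}$ lies \emph{outside} the range $\{0,\ldots,m-1\}$; hence every $m_{j\infty}=0$ for $j\in\{0,\ldots,m-1\}$ and the sum vanishes, giving $\log q_\infty=\log y_\infty$.

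The main obstacle I anticipate is the bookkeeping in the orbifold case: one must verify carefully that the nonzero pairing $m_{i_0\infty}$ attaches to the orbifold index $i_0\geq m$ and therefore does not appear among the smooth-divisor terms $g_j$ with $j\le m-1$, which requires tracking how the extended stacky fan treats the box-element vector $b_{v_{j_0}}$ and its negative $b_\infty$ under the basis choice for $\bar{\mathbb L}^\vee$. Once the identification of the $m_{j\infty}$ in each case is pinned down, the proposition follows immediately by collecting the $\bar{p}_\infty$-coefficients.
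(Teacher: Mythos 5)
Your proposal is correct and follows essentially the same route as the paper: both extract the mirror map by matching the $z^{-1}$-coefficients of $I_{(\bar{\mathcal X},D_\infty)}$ and $J_{(\bar{\mathcal X},D_\infty)}$, reduce the $a=\infty$ case to the values of the matrix entries $m_{j\infty}$, and conclude via $m_{i_0\infty}=m_{\infty\infty}=1$, $m_{j\infty}=0$ otherwise (so that in the orbi-disk case $m_{j\infty}=0$ for all $j\in\{0,\ldots,m-1\}$ and the correction term vanishes). Your justification of these entries by pairing $D_j$ against $d_\infty=e_{i_0}+e_\infty$ is exactly the implicit ``toric setting'' computation the paper relies on, e.g.\ in the proof of Theorem \ref{thm:rel-smooth}.
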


The toric mirror map for $\mathcal X$ was computed in \cite[Proposition 6.15]{CCLT} via a similar computation.
\begin{proposition}[\cite{CCLT}, Proposition 6.15]\label{prop:toric-mirror-map}
The toric mirror map for the toric Calabia--Yau orbifold $\mathcal X$ is given by
\[
\log q_a=\log y_a+\sum_{j=0}^{m-1} m_{ja}g_j(y), \quad a\in\{1,\ldots, r^\prime\},
\]
\[
\tau_{b_j}=g_j(y), j\in\{m\ldots,m^\prime-1\},
\]
where the functions $g_j(y)$ are defined in (\ref{g-j-smooth}) and (\ref{g-j-orbi}). 
\end{proposition}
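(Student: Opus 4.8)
The plan is to read off the mirror map directly from the $z^{-1}$-coefficient of the $I$-function $I_{\mathcal X}(y,z)$, paralleling verbatim the computation that produced Proposition \ref{prop:rel-toric-mirror-map}, the only structural difference being the absence of the extra hypergeometric factor attached to $D_\infty$. Recall that $I_{\mathcal X}(y,z)=1+\tau(y)/z+O(z^{-2})$ with $\tau(y)\in H^{\leq 2}_{\on{CR}}(\mathcal X)$ the mirror map, so the task is to isolate the coefficient of $z^{-1}$ taking values in $H^{\leq 2}_{\on{CR}}(\mathcal X)$ and then match it against the $z^{-1}$-coefficient of the $J$-function under the identification $J_{\mathcal X}(\tau(y),z)=I_{\mathcal X}(y,z)$ of Theorem \ref{thm:toric-stack}.

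First I would record that for a toric Calabi--Yau orbifold the class $\hat\rho(\mathcal X)=\sum_{i=0}^{m^\prime-1}D_i$ vanishes in $\mathbb L^\vee$: the dual vector $\underline v$ of Definition \ref{defn-toric-CY} pairs to $1$ with every stacky vector $b_i$, and also with each age-one extra vector $b_m,\dots,b_{m^\prime-1}$ since these have age $1$ under Assumption \ref{assumption-extra-vectors}; hence $(\rho^S)^\vee(\underline v)=\sum_i e_i^\vee$ lies in the image of $(\rho^S)^\vee$ and is therefore annihilated by $\psi^\vee$ by exactness of the divisor sequence (\ref{divisor-seq}), giving $\sum_i D_i=0$. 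Applying the same factorization of the product factor used just above (which extracts a power $z^{-\hat\rho(\mathcal X)\cdot d-\age(v(d))}$) then reduces the prefactor to $z^{-\age(v(d))}$, and a monomial of the remaining series in $\bar D_i/z$ contributing $z^{-k}$ carries cohomological degree $2k$. Since $\textbf{1}_{v(d)}$ has degree $2\age(v(d))$, the $z^{-1}$-part valued in $H^{\leq 2}_{\on{CR}}$ is governed by the single constraint $k+\age(v(d))=1$.

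Next I would split the extraction into the three cases forced by this constraint, exactly as in the relative computation. The $d=0$ term of the sum combined with the expansion of $e^{t/z}$ contributes $t=\sum_{a=1}^{r^\prime}\bar p_a\log y_a$, using $\bar p_a=0$ for $a>r^\prime$; one checks there are no cross terms from $d\neq 0$, since a nonzero $z^0$-part of a summand would force all $D_i\cdot d=0$ and hence $d=0$. The case $(k,\age(v(d)))=(1,0)$ requires $v(d)=0$ with exactly one index $j$ having $D_j\cdot d<0$ and $D_i\cdot d\geq 0$ otherwise, contributing $\bar D_j$ with the scalar coefficient recorded in (\ref{g-j-smooth}), so this yields $\sum_{j=0}^{m-1}g_j(y)\bar D_j$. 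The case $(k,\age(v(d)))=(0,1)$ requires $D_i\cdot d\geq 0$ for all $i$ and $v(d)=b_j$ for some $m\leq j\leq m^\prime-1$ by Assumption \ref{assumption-extra-vectors}, and the constant term of the normalized product is $g_j(y)$ of (\ref{g-j-orbi}), giving $\sum_{j=m}^{m^\prime-1}g_j(y)\textbf{1}_{b_j}$. Summing,
\[
\tau(y)=\sum_{a=1}^{r^\prime}\bar p_a\log y_a+\sum_{j=0}^{m-1}g_j(y)\bar D_j+\sum_{j=m}^{m^\prime-1}g_j(y)\textbf{1}_{b_j}.
\]

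Finally I would substitute $\bar D_j=\sum_{a=1}^{r^\prime}m_{ja}\bar p_a$ to collect the untwisted degree-two part as $\sum_{a=1}^{r^\prime}\bigl(\log y_a+\sum_{j=0}^{m-1}m_{ja}g_j(y)\bigr)\bar p_a$, and compare with the $z^{-1}$-coefficient $\tau_{0,2}+\tau_{tw}=\sum_{a=1}^{r^\prime}\bar p_a\log q_a+\sum_{j=m}^{m^\prime-1}\tau_{b_j}\textbf{1}_{b_j}$ of the $J$-function. Matching the coefficient of each $\bar p_a$ gives $\log q_a=\log y_a+\sum_{j=0}^{m-1}m_{ja}g_j(y)$, and matching the coefficient of each $\textbf{1}_{b_j}$ gives $\tau_{b_j}=g_j(y)$, which is the assertion. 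Since this is a rerun of the computation already carried out for $(\bar{\mathcal X},D_\infty)$ with the $D_\infty$-factor deleted, no genuinely new difficulty arises; the only point demanding care is the degree-versus-$z$-power bookkeeping of the second paragraph, which rests entirely on the vanishing $\hat\rho(\mathcal X)=0$ that collapses the analysis to the single constraint $k+\age(v(d))=1$.
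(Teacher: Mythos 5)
Your route is the same one the paper (implicitly) takes: the paper does not reprove this proposition but cites \cite[Proposition 6.15]{CCLT}, remarking that it follows from ``a similar computation'' to the one it carries out for $(\bar{\mathcal X},D_\infty)$, and your write-up is precisely that computation run for $\mathcal X$ itself, matching $z^{-1}$-coefficients of $I_{\mathcal X}$ and $J_{\mathcal X}$ via Theorem \ref{thm:toric-stack}. Most of it is sound: the Calabi--Yau vanishing $\hat\rho(\mathcal X)=\sum_{i=0}^{m^\prime-1}D_i=0$ in $\mathbb L^\vee$ is correctly derived (the age-one box elements do pair to $1$ with $\underline v$, and exactness of (\ref{divisor-seq}) kills $\sum_i e_i^\vee$), the degree-versus-$z$-power bookkeeping and the absence of cross terms are right (a nonzero $z^0$-part of a $d\neq 0$ summand forces $v(d)=0$ and all $D_i\cdot d\geq 0$, whence $d=0$ by $\sum_i D_i\cdot d=0$), Case A is right modulo the unstated but harmless remark that a unique negative index $j\in\{m,\dots,m^\prime-1\}$ contributes nothing because $\bar D_j=0$, and the final matching against $\tau_{0,2}+\tau_{tw}$ is fine.

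However, Case B contains a genuine error --- and it is exactly the step your closing paragraph declares unproblematic. You assert that $(k,\age(v(d)))=(0,1)$ ``requires $D_i\cdot d\geq 0$ for all $i$.'' Combined with the relation $\sum_{i=0}^{m^\prime-1}D_i\cdot d=0$ that you yourself established, this condition forces $d=0$, so your Case B sum is \emph{empty} and your derivation actually proves $\tau_{b_j}\equiv 0$, which is false: for $\mathcal X=[\mathbb C^3/\mathbb Z_3]$, with rays $b_0,b_1,b_2$ and age-one box element $b_3=\frac{1}{3}(b_0+b_1+b_2)$, every $d\in\mathbb K_{\on{eff}}$ with $v(d)=b_3$ has $D_0\cdot d=D_1\cdot d=D_2\cdot d$ a \emph{negative} fractional number, and these are precisely the terms producing the nonzero twisted mirror map $\tau_{b_3}(y)=y^{1/3}+\cdots$. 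What $k=0$ actually forces is only that no pairing is a negative \emph{integer}: an integer $D_i\cdot d<0$ makes the $i$-th factor divisible by $\bar D_i/z$, but a fractional negative pairing contributes the nonzero scalar $\prod_{D_i\cdot d<a\leq 0,\,\langle a\rangle=\langle D_i\cdot d\rangle}a$; indeed, since CY forces some negative pairing once $d\neq 0$, every Case B contribution necessarily carries fractional negative pairings, supported on the rays of the minimal cone containing $b_j$. In fairness, the identical defect sits in the paper itself: display (\ref{g-j-orbi}) carries the same condition ``$D_i\cdot d\geq 0,\forall i$,'' and the sentence ``$\#\{i:D_i\cdot d<0,\ 0\leq i\leq m-1\}=0$'' in the derivation of Proposition \ref{prop:rel-toric-mirror-map} has the same problem, so you have faithfully reproduced the paper's misstatement rather than invented a new one. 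The fix, for both, is to replace the condition by ``$D_i\cdot d\notin\mathbb Z_{<0}$ for all $i$'' --- equivalently to drop it, since the coefficient in (\ref{g-j-orbi}) already vanishes when some $D_i\cdot d\in\mathbb Z_{<0}$, its numerator then containing the factor $a=0$. With that correction your Case B, and the rest of your argument, goes through verbatim.
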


By Proposition \ref{prop:rel-inv}, we can write the following generating function for relative invariants as 
\[
\sum_{\alpha\in H_2^{\on{eff}}(\mathcal X)}\sum_{l\geq 0}\sum_{v_1,\ldots v_l\in \bbox(\Sigma)^{\age=1}}\frac{\prod_{i=1}^l \tau_{v_i}}{l!}\left\langle [\on{pt}]_{D_\infty},\textbf{1}_{\bar{v}_1},\ldots,\textbf{1}_{\bar{v}_l}\right\rangle_{0,1+l, \bar{\beta}^\prime+\alpha}^{(\bar{\mathcal X},D_\infty)}q^{\alpha}=y^{d_\infty}q^{-\bar{\beta}^\prime}
\]

We can explicitly compute these relative invariants. The result gives an enumerative meaning of the inverse of the toric mirror map of $\mathcal X$ in terms of relative Gromov--Witten invariants.

\begin{theorem}\label{thm:rel-smooth}
If $\beta^\prime=\beta_{i_0}$ is a basic smooth disk class for some $i_0\in\{0,1,\ldots,m-1\}$, then we have
\begin{align*}
1+\delta_{i_0}^{\on{rel}}:=&\sum_{\alpha\in H_2^{\on{eff}}(\mathcal X)}\sum_{l\geq 0}\sum_{v_1,\ldots v_l\in \bbox(\Sigma)^{\age=1}}\frac{\prod_{i=1}^l \tau_{v_i}}{l!}\left\langle [\on{pt}]_{D_\infty},\textbf{1}_{\bar{v}_1},\ldots,\textbf{1}_{\bar{v}_l}\right\rangle_{0,1+l, \bar{\beta}^\prime+\alpha}^{(\bar{\mathcal X},D_\infty)}q^{\alpha}\\
=&\exp(-g_{i_0}(y(q,\tau))),
\end{align*}
where $y=y(q,\tau)$ is the inverse of the toric mirror map of $\mathcal X$ in Proposition \ref{prop:toric-mirror-map}.
\end{theorem}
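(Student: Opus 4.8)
The plan is to evaluate the right-hand side of the identity
$1+\delta_{i_0}^{\on{rel}}=y^{d_\infty}q^{-\bar{\beta}^\prime}$ recorded just above the statement (which follows from Proposition \ref{prop:rel-inv} and the definition (\ref{rel-gen-fun}), since Proposition \ref{prop:rel-inv} computes the same generating series carrying an extra factor $q^{\bar{\beta}^\prime}$). I would feed in the toric mirror map of Proposition \ref{prop:rel-toric-mirror-map} and then re-express everything through the inverse mirror map of $\mathcal X$ from Proposition \ref{prop:toric-mirror-map}.

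The decisive simplification is that in the smooth case $d_\infty=e_{i_0}+e_\infty=\bar{\beta}^\prime$, and this class is precisely the dual basis vector $\gamma_\infty\in\bar{\mathbb L}$ of $p_\infty$. Indeed, for $\beta^\prime=\beta_{i_0}$ with $i_0\in\{0,\ldots,m-1\}$ one has $m_{i_0\infty}=m_{\infty\infty}=1$ and $m_{j\infty}=0$ otherwise, so $\gamma_\infty=\sum_i m_{i\infty}e_i=e_{i_0}+e_\infty=d_\infty$. Consequently $p_a\cdot d_\infty=\delta_{a\infty}$, whence $y^{d_\infty}=y_\infty$ and $q^{\bar{\beta}^\prime}=q_\infty$. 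Therefore $1+\delta_{i_0}^{\on{rel}}=y_\infty/q_\infty$, and the computation collapses to a single variable.

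First I would apply the $a=\infty$ component of Proposition \ref{prop:rel-toric-mirror-map}, namely $\log q_\infty=\log y_\infty+g_{i_0}(y)$ in the smooth case, to get
\[
\frac{y_\infty}{q_\infty}=\exp\bigl(-g_{i_0}(y)\bigr).
\]
Next I would note that $g_{i_0}(y)$, defined in (\ref{g-j-smooth}) as a sum over classes $d$ with $D_\infty\cdot d=0$ (equivalently $d\in\mathbb K_{\on{eff}}$ with $p_\infty\cdot d=0$), involves only the variables $y_1,\ldots,y_r$ and not $y_\infty$. Hence its argument is governed entirely by the non-$\infty$ part of the mirror map, which is exactly the toric mirror map of $\mathcal X$ in Proposition \ref{prop:toric-mirror-map}. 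Inverting the latter to write $y=y(q,\tau)$ then gives $1+\delta_{i_0}^{\on{rel}}=\exp(-g_{i_0}(y(q,\tau)))$, as claimed.

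The argument is short once Propositions \ref{prop:rel-inv}, \ref{prop:rel-toric-mirror-map} and \ref{prop:toric-mirror-map} are available, so the work is essentially bookkeeping in the second step. The main obstacle is verifying the identification $d_\infty=\gamma_\infty$, which is what turns $y^{d_\infty}q^{-\bar{\beta}^\prime}$ into the single ratio $y_\infty/q_\infty$, together with the observation that $g_{i_0}$ is independent of $y_\infty$. These two facts are precisely what allow the $(\bar{\mathcal X},D_\infty)$ computation to be closed using the inverse mirror map of $\mathcal X$ alone, rather than that of the pair.
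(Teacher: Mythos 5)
Your proposal is correct and follows essentially the same route as the paper: both reduce via Proposition \ref{prop:rel-inv} to computing $y^{d_\infty}q^{-\bar{\beta}^\prime}$ and then evaluate it using Proposition \ref{prop:rel-toric-mirror-map} together with Proposition \ref{prop:toric-mirror-map}. Your only (valid) streamlining is to observe upfront that $d_\infty=\gamma_\infty$, so $p_a\cdot d_\infty=\delta_{a\infty}$ and only the $a=\infty$ mirror-map relation is needed, whereas the paper expands $\log q^{d_\infty}-\log y^{d_\infty}$ over all basis directions and lets the extra terms cancel.
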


\begin{proof}
By Proposition \ref{prop:rel-inv}, it remains to compute $y^{d_\infty}q^{-\bar{\beta}^\prime}$.
Since $\beta^\prime=\beta_{i_0}$ is a basic smooth disk class, we have $\bar{\beta}^\prime=d_\infty$. Furthermore, we have
\[
D_\infty=p_\infty; \quad p_\infty\cdot d_\infty=1;\quad  \bar{D}_i\cdot d_\infty=D_i\cdot d_\infty, \forall i;\quad \bar{p}_a\cdot d_\infty=p_a\cdot d_\infty, \forall a. 
\]
Therefore,
\begin{align*}
&\log q^{d_\infty}-\log y^{d_\infty}\\
=&\sum_{a=1}^{r^\prime}(\bar{p}_a\cdot d_\infty)\log q_a+(\bar{p}_\infty\cdot d_\infty)\log q_\infty-\log y^{d_\infty}.\\
=& \sum_{a=1}^{r^\prime}(\bar{p}_a\cdot d_\infty)\left(\log y_a+\sum_{j=0}^{m-1} m_{ja}g_j(y)\right)+\left(\log y_\infty+g_{i_0}(y)\right)-\log y^{d_\infty}\\
=&\left(\sum_{j=0}^{m-1} \left(\sum_{a=1}^{r^\prime}m_{ja}(\bar{p}_a\cdot d_\infty)\right)g_j(y)\right)+g_{i_0}(y)\\
=&\left(\sum_{j=0}^{m-1} \left(D_j\cdot d_\infty-m_{j\infty}(\bar{p}_\infty\cdot d_\infty)\right)g_j(y)\right)+g_{i_0}(y)\\
=&\left(\sum_{j=0}^{m-1} \left(m_{j\infty}-m_{j\infty}\right)g_j(y)\right)+g_{i_0}(y)\\
=& g_{i_0}(y),
\end{align*}
where the third line follows from Proposition \ref{prop:rel-toric-mirror-map} and the rest of the lines follow from our toric setting.
Then the theorem follows.
\end{proof}

\begin{theorem}\label{thm:rel-orbi}
If $\beta^\prime=\beta_{v_{j_0}}$ is a basic orbi-disk class corresponding to a box element $v_{j_0}\in \bbox(\Sigma)^{\age=1}$ for some $j_0\in\{m,\ldots, m^\prime-1\}$, then we have
\begin{align*}
\tau_{v_{j_0}}+\delta_{v_{j_0}}^{\on{rel}}:=&\sum_{\alpha\in H_2^{\on{eff}}(\mathcal X)}\sum_{l\geq 0}\sum_{v_1,\ldots v_l\in \bbox(\Sigma)^{\age=1}}\frac{\prod_{i=1}^l \tau_{v_i}}{l!}\left\langle [\on{pt}]_{D_\infty},\textbf{1}_{\bar{v}_1},\ldots,\textbf{1}_{\bar{v}_l}\right\rangle_{0,1+l, \bar{\beta}^\prime+\alpha}^{(\bar{\mathcal X},D_\infty)}q^{\alpha}\\
=& y^{D_{j_0}^\vee}\exp\left(-\sum_{i\not\in I_{j_0}}c_{j_0 i}g_{i}(y(q,\tau))\right),
\end{align*}
where $y=y(q,\tau)$ is the inverse of the toric mirror map of $\mathcal X$ in Proposition \ref{prop:toric-mirror-map}; $D_{j_0}^\vee$ is the class defined in (\ref{D-dual}); $I_{j_0}\in \mathcal A$ is the anticone of the minimal cone containing $b_{j_0}=\sum_{i\not\in I_{j_0}}c_{j_0i}b_i$.

\end{theorem}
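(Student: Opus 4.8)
The plan is to run the same strategy as in the proof of Theorem \ref{thm:rel-smooth}: by Proposition \ref{prop:rel-inv} the generating function on the left equals $y^{d_\infty}q^{-\bar{\beta}^\prime}$, so everything reduces to evaluating this monomial-times-series. The essential new feature of the orbi-disk case is that $d_\infty = e_{j_0}+e_\infty$ is no longer a class in $H_2(\bar{\mathcal X},\mathbb Q)$, so one cannot simply put $d_\infty = \bar{\beta}^\prime$ as in the smooth computation. First I would record the pairings of $d_\infty$: since only the $e_{j_0}$- and $e_\infty$-coordinates are nonzero, $D_{j_0}\cdot d_\infty = D_\infty\cdot d_\infty = 1$ and $D_i\cdot d_\infty = 0$ for every other $i$.

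The key step — and the part I expect to be the \emph{main obstacle} — is to locate the honest curve class inside $\bar{\mathbb L}\otimes\mathbb Q$ by establishing
\[
\bar{\beta}^\prime = d_\infty - D_{j_0}^\vee .
\]
I would prove this by comparing $D_i\cdot(d_\infty - D_{j_0}^\vee)$ with $D_i\cdot\bar{\beta}^\prime$ for every index $i$, which suffices because the $D_i$ span $\bar{\mathbb L}^\vee\otimes\mathbb Q$. Here the defining relations (\ref{D-dual}) for $D_{j_0}^\vee$ enter decisively, together with the combinatorial fact that $b_{j_0}$ is an age-one box element, so the minimal cone containing it is generated by genuine rays only: every extra vector, including $j_0$ itself, lies in $I_{j_0}$. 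Consequently $D_j\cdot D_{j_0}^\vee = 0$ for all extra $j\neq j_0$ and $=1$ for $j=j_0$, while $D_\infty\cdot D_{j_0}^\vee = 0$ because $D_{j_0}^\vee\in\mathbb L\otimes\mathbb Q$. Combining these with the pairings of $d_\infty$ shows that $d_\infty - D_{j_0}^\vee$ pairs to zero with all extra divisors, hence lies in $H_2(\bar{\mathcal X},\mathbb Q)$, and that its remaining pairings are $c_{j_0 i}$ on the cone generators $i\not\in I_{j_0}$ and $1$ on $D_\infty$, which pins it down as $\bar{\beta}^\prime$.

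Granting this relation, the factorization $y^{d_\infty}=y^{D_{j_0}^\vee}\,y^{\bar{\beta}^\prime}$ peels off the prefactor $y^{D_{j_0}^\vee}$ and reduces the claim to
\[
\log y^{\bar{\beta}^\prime} - \log q^{\bar{\beta}^\prime} = -\sum_{i\not\in I_{j_0}} c_{j_0 i}\, g_i(y).
\]
Because $\bar{\beta}^\prime$ now lies in $H_2(\bar{\mathcal X},\mathbb Q)$, the pairing with the $p_a$ factors through the nef classes $\bar p_a$, so I can expand $\log q^{\bar{\beta}^\prime}$ over the basis and substitute the toric mirror map of Proposition \ref{prop:rel-toric-mirror-map}. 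The orbifold-specific inputs are $\log q_\infty = \log y_\infty$ and $m_{j\infty}=0$ for all $j\in\{0,\ldots,m-1\}$, which together give $\sum_{a=1}^{r^\prime} m_{ja}(\bar p_a\cdot\bar{\beta}^\prime) = \bar D_j\cdot\bar{\beta}^\prime = D_j\cdot\bar{\beta}^\prime$. Plugging in $D_j\cdot\bar{\beta}^\prime = c_{j_0 j}$ for $j\not\in I_{j_0}$ and $0$ otherwise collapses the double sum to exactly $-\sum_{i\not\in I_{j_0}} c_{j_0 i}\, g_i(y)$, and the theorem follows after inverting the mirror map to $y=y(q,\tau)$. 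Apart from the identification of $\bar{\beta}^\prime$, every remaining manipulation parallels the smooth case and amounts to routine bookkeeping with the matrix $(m_{ia})$.
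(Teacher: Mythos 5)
Your proposal is correct and takes essentially the same route as the paper's proof: reduce via Proposition \ref{prop:rel-inv} to evaluating $y^{d_\infty}q^{-\bar{\beta}^\prime}$, establish the key identity $d_\infty-\bar{\beta}^\prime=D_{j_0}^\vee$, factor off $y^{D_{j_0}^\vee}$, and compute $\log q^{\bar{\beta}^\prime}-\log y^{\bar{\beta}^\prime}$ from Proposition \ref{prop:rel-toric-mirror-map} using $\log q_\infty=\log y_\infty$ and $m_{j\infty}=0$ for $j\in\{0,\ldots,m-1\}$. The only cosmetic difference is that you verify the key identity by pairing $d_\infty-D_{j_0}^\vee$ against all toric divisors (using that they span $\bar{\mathbb L}^\vee\otimes\mathbb Q$), whereas the paper writes $\bar{\beta}^\prime=\sum_{i\not\in I_{j_0}}c_{j_0 i}e_i+e_\infty$ in coordinates and subtracts directly---the same content.
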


\begin{proof}
Since $\beta^\prime=\beta_{v_{j_0}}$ is a basic orbi-disk class, we have
\[
\bar{\beta}^\prime=\left(\sum_{i\not\in I_{j_0}}c_{j_0 i}e_i\right)+e_{\infty},\quad \text{and } d_\infty=e_{j_0}+e_\infty.
\]
Therefore, by the definition of $D_{j_0}^\vee$ in (\ref{D-dual}),
\[
d_\infty-\bar{\beta}^\prime=D_{j_0}^\vee\in\mathbb K_{\on{eff}}.
\]

Hence, we have
\[
y^{d_\infty}q^{-\bar{\beta}^\prime}=y^{D_{j_0}^\vee}y^{\bar{\beta}^\prime}q^{-\bar{\beta}^\prime}.
\]
Now it remains to compute $y^{\bar{\beta}^\prime}q^{-\bar{\beta}^\prime}$. We have
\begin{align*}
&\log q^{\bar{\beta}^\prime}-\log y^{\bar{\beta}^\prime}\\
=&\sum_{a=1}^{r^\prime}(\bar{p}_a\cdot \bar{\beta}^\prime)\log q_a+(\bar{p}_\infty\cdot \bar{\beta}^\prime)\log q_\infty-\log y^{\bar{\beta}^\prime}\\
=& \sum_{a=1}^{r^\prime}(\bar{p}_a\cdot \bar{\beta}^\prime)\left(\log y_a+\sum_{i=0}^{m-1}m_{ia}g_i(y)\right)+(\bar{p}_\infty\cdot \bar{\beta}^\prime)\log y_\infty-\log y^{\bar{\beta}^\prime}\\
=& \sum_{a=1}^{r^\prime}(\bar{p}_a\cdot \bar{\beta}^\prime)(\log y_a)+ \sum_{a=1}^{r^\prime}(\bar{p}_a\cdot \bar{\beta}^\prime)\left(\sum_{i=0}^{m-1}m_{ia}g_i(y)\right)+(\bar{p}_\infty\cdot \bar{\beta}^\prime)\log y_\infty-\log y^{\bar{\beta}^\prime}\\
=&  \sum_{i=0}^{m-1}\left(\sum_{a=1}^{r^\prime} m_{ia}(\bar{p}_a\cdot \bar{\beta}^\prime)g_i(y)\right)\\
=&\sum_{i=0}^{m-1}\left((\bar{D}_i\cdot\bar{\beta}^\prime) g_i(y)\right)\\
=& \sum_{i\not\in I_{j_0}}c_{j_0 i}g_{i}(y).
\end{align*}
where the third line follows from Proposition \ref{prop:rel-toric-mirror-map}, the sixth line follows from the fact that $m_{i\infty}=0$ for $i\in\{0,\ldots,m-1\}$, and the rest of the lines follow from our toric setting.
Then the theorem follows.
\end{proof}


\subsection{Relative invariants as disk countings}\label{sec:open-closed-duality}

The formulas for the genus zero open orbifold Gromov--Witten invariants was proved in \cite{CCLT} by relating open invariants of toric Calabi--Yau orbifold $\mathcal X$ with certain (absolute) closed invariants of the toric compactification $\bar {\mathcal X}$. Then the corresponding closed invariants were computed via toric mirror theorem. 
\begin{theorem}[\cite{CCLT}, Theorem 6.19]\label{thm:open-smooth}
If $\beta^\prime=\beta_{i_0}$ is a basic smooth disk class for some $i_0\in\{0,1,\ldots,m-1\}$, then we have
\begin{align*}
1+\delta_{i_0}^{\on{open}}=\exp(-g_{i_0}(y(q,\tau))),
\end{align*}
where $y=y(q,\tau)$ is the inverse of the toric mirror map of $\mathcal X$ in Proposition \ref{prop:toric-mirror-map}.
\end{theorem}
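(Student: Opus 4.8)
The plan is to follow the open-closed strategy of \cite{CCLT}, which runs closely parallel to the relative computation carried out in Theorem \ref{thm:rel-smooth}. The idea is first to express the genus zero open orbifold invariants $n_{1,l,\beta_{i_0}+\alpha}^{\mathcal X}$ as genuine (absolute, closed) Gromov--Witten invariants of the toric compactification $\bar{\mathcal X}$, and then to compute the resulting generating function by means of the toric mirror theorem for $\bar{\mathcal X}$ (Theorem \ref{thm:toric-stack}). Since the hypergeometric data governing $\bar{\mathcal X}$ is the same whether one reads off closed invariants or the relative invariants of $(\bar{\mathcal X},D_\infty)$ treated in Section \ref{sec:compute-rel-inv}, I expect the final answer to be literally the same function $\exp(-g_{i_0}(y(q,\tau)))$ produced in Theorem \ref{thm:rel-smooth}.

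First I would set up the open-closed correspondence at the level of moduli. A stable holomorphic orbi-disk in $\mathcal X$ bounded by the Lagrangian torus fiber $L$, in a Maslov index two class $\beta_{i_0}+\alpha$ with $l$ interior age-one orbifold markings, can be capped off along $L$ by the disk lying in the fiber direction over $D_\infty=\bar{\mathcal X}\setminus \mathcal X$; this produces a genus zero orbifold stable map to $\bar{\mathcal X}$ in the class $\bar{\beta}^\prime+\alpha$ which meets $D_\infty$ exactly once, since $D_\infty\cdot\bar{\beta}^\prime=1$. Under this capping the boundary evaluation $\on{ev}_0$ landing on $L$ is converted into a constraint along $D_\infty$, so that the point class $[\on{pt}]_L$ is replaced by a point constraint in the compactification. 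The heart of the argument is to show that the capping identifies the disk moduli $\mathcal M_{1,l}^{main}(L,\beta_{i_0}+\alpha,\vec x)$ with the relevant component of the moduli of stable maps to $\bar{\mathcal X}$ in a way that matches virtual fundamental classes; the compactness of the disk moduli from \cite[Proposition 6.10]{CCLT} is what makes the capped-off count well defined.

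Granting the open-closed equality, the remaining step is a computation of closed invariants of $\bar{\mathcal X}$ that is formally identical to the one in Theorem \ref{thm:rel-smooth}. I would assemble the $n_{1,l,\beta_{i_0}+\alpha}^{\mathcal X}$ into the generating function $1+\delta_{i_0}^{\on{open}}$ with the bulk parameters $\tau_v$, recognize it as a component of the $J$-function of $\bar{\mathcal X}$, and invoke Theorem \ref{thm:toric-stack} to replace that $J$-function by the explicit hypergeometric $I$-function of $\bar{\mathcal X}$. Reading off the appropriate coefficient then reduces the whole generating function to the toric mirror map, and the computation $\log q^{d_\infty}-\log y^{d_\infty}=g_{i_0}(y)$ performed in the proof of Theorem \ref{thm:rel-smooth} (using Proposition \ref{prop:rel-toric-mirror-map} together with the identity $\bar{\beta}^\prime=d_\infty$) yields $\exp(-g_{i_0}(y(q,\tau)))$, where $y(q,\tau)$ inverts the toric mirror map of $\mathcal X$ from Proposition \ref{prop:toric-mirror-map}.

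The main obstacle is the open-closed duality of the second paragraph, namely the comparison of the virtual fundamental class of the orbi-disk moduli in $\mathcal X$ with that of the capped-off stable maps to $\bar{\mathcal X}$; this is the genuinely symplectic-geometric input and the only place where a new idea beyond the toric mirror theorem is required. Once it is in place, the mirror-theorem bookkeeping is the same as in Theorem \ref{thm:rel-smooth}, and the fact that the open count and the relative count of $(\bar{\mathcal X},D_\infty)$ are governed by the identical hypergeometric factor is precisely what forces the two to agree, anticipating the equality $1+\delta_{i_0}^{\on{open}}=1+\delta_{i_0}^{\on{rel}}$ recorded later in Theorem \ref{thm:open=rel}.
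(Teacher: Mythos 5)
Your proposal follows exactly the route the paper attributes to its source: Theorem \ref{thm:open-smooth} is quoted from \cite[Theorem 6.19]{CCLT} rather than reproved here, and the cited proof proceeds precisely by the open-closed correspondence identifying the open invariants $n_{1,l,\beta_{i_0}+\alpha}^{\mathcal X}$ with certain absolute closed invariants of the toric compactification $\bar{\mathcal X}$, followed by the toric mirror theorem for $\bar{\mathcal X}$. Your reconstruction of that strategy, including reducing the final bookkeeping to the same mirror-map computation $\log q^{d_\infty}-\log y^{d_\infty}=g_{i_0}(y)$ that appears in Theorem \ref{thm:rel-smooth}, matches the cited argument in all essentials.
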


\begin{theorem}[\cite{CCLT}, Theorem 6.20]\label{thm:open-orbi}
If $\beta^\prime=\beta_{v_{j_0}}$ is a basic orbi-disk class corresponding to a box element $v_{j_0}\in \bbox(\Sigma)^{\age=1}$ for some $j_0\in\{m,\ldots, m^\prime-1\}$, then we have
\begin{align*}
\tau_{v_{j_0}}+\delta_{v_{j_0}}^{\on{open}}= y^{D_{j_0}^\vee}\exp\left(-\sum_{i\not\in I_{j_0}}c_{j_0 i}g_{i}(y(q,\tau))\right),
\end{align*}
where $y=y(q,\tau)$ is the inverse of the toric mirror map of $\mathcal X$ in Proposition \ref{prop:toric-mirror-map}; $D_{j_0}^\vee$ is the class defined in (\ref{D-dual}); $I_{j_0}\in \mathcal A$ is the anticone of the minimal cone containing $b_{j_0}=\sum_{i\not\in I_{j_0}}c_{j_0i}b_i$.
\end{theorem}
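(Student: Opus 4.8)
The plan is to follow the open/closed strategy of \cite{CCLT}, reducing the orbi-disk count to an \emph{absolute} closed orbifold Gromov--Witten invariant of the compactification $\bar{\mathcal X}$ and then evaluating it through the toric mirror theorem (Theorem \ref{thm:toric-stack}). Once the orbi-disk invariant is expressed as a closed invariant, the mirror-map bookkeeping needed to reach the stated formula is identical to the computation already carried out for the relative invariants in Theorem \ref{thm:rel-orbi}, with relative invariants of $(\bar{\mathcal X},D_\infty)$ replaced by absolute invariants of $\bar{\mathcal X}$.

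First I would establish the open/closed equality
\[
n_{1,l,\beta_{v_{j_0}}+\alpha}^{\mathcal X}([\on{pt}]_L;\textbf{1}_{v_1},\ldots,\textbf{1}_{v_l})
=\left\langle [\on{pt}]_{\bar{\mathcal X}},\textbf{1}_{\bar{v}_1},\ldots,\textbf{1}_{\bar{v}_l}\right\rangle_{0,1+l,\bar{\beta}^\prime+\alpha}^{\bar{\mathcal X}},
\]
where $[\on{pt}]_{\bar{\mathcal X}}$ is the pushforward of the point class of $D_\infty$. The geometric mechanism is that an orbi-disk in $\mathcal X$ bounded by the torus fiber $L$ and representing $\beta_{v_{j_0}}+\alpha$ is capped off along $D_\infty$ by the basic disk $\beta_\infty$, producing a closed genus-zero orbifold stable map to $\bar{\mathcal X}$ meeting $D_\infty$ at a single point; the boundary condition on $L$ is thereby traded for the point insertion $[\on{pt}]_{\bar{\mathcal X}}$. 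Making this rigorous requires identifying the virtual fundamental chain of the disk moduli $\mathcal M_{1,l}^{main}(L,\beta_{v_{j_0}}+\alpha,\vec x)$ with the virtual class of $\overline{M}_{0,1+l,\bar{\beta}^\prime+\alpha}(\bar{\mathcal X})$, controlling the orbifold gluing at $D_\infty$, the $\mu_r$-gerbe structure of the twisted sectors, and orientations; here I would invoke the analysis of \cite{CP} and \cite[Section 6]{CCLT}.

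With this equality in hand, I would assemble the generating function and compute it exactly as in Section \ref{sec:compute-rel-inv}. The toric mirror theorem for $\bar{\mathcal X}$ together with the virtual-dimension and degree analysis singles out the degrees $d=\bar{\beta}^\prime+\alpha$ and forces the point-class contribution to the relevant coefficient of the $J$-function to equal the monomial $y^{d_\infty}$, so the generating function equals $y^{d_\infty}q^{-\bar{\beta}^\prime}$, the absolute analogue of Proposition \ref{prop:rel-inv}. Using $d_\infty-\bar{\beta}^\prime=D_{j_0}^\vee$ from (\ref{D-dual}) I would factor $y^{d_\infty}q^{-\bar{\beta}^\prime}=y^{D_{j_0}^\vee}\,y^{\bar{\beta}^\prime}q^{-\bar{\beta}^\prime}$ and evaluate $y^{\bar{\beta}^\prime}q^{-\bar{\beta}^\prime}$ via the toric mirror map of Proposition \ref{prop:toric-mirror-map}; since $m_{i\infty}=0$ for $i\in\{0,\ldots,m-1\}$ in the orbi-disk case, the sum collapses to $\exp\left(-\sum_{i\not\in I_{j_0}}c_{j_0i}g_i(y)\right)$, which reproduces the claimed formula.

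The hard part is the open/closed equality of the second paragraph: it is the only step that is genuinely symplectic/orbifold-analytic rather than toric bookkeeping, and the subtleties of orbifold disk moduli, gluing at $D_\infty$, and orientations are where the real work lies. Once that correspondence is granted, the rest is a verbatim repetition of the mirror-map argument already used for the relative invariants in the proof of Theorem \ref{thm:rel-orbi}.
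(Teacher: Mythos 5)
Your proposal follows essentially the same route as the proof this paper relies on: the result is quoted from \cite{CCLT}, and, as stated in Section \ref{sec:open-closed-duality}, that proof proceeds exactly by relating the open invariants to \emph{absolute} closed orbifold invariants of the toric compactification $\bar{\mathcal X}$ (the open/closed correspondence you describe) and then computing those closed invariants via the toric mirror theorem and the mirror-map bookkeeping. Your identification of the open/closed step as the genuinely hard, non-toric part and the remainder as the same $y^{d_\infty}q^{-\bar{\beta}^\prime}=y^{D_{j_0}^\vee}\,y^{\bar{\beta}^\prime}q^{-\bar{\beta}^\prime}$ computation used in Theorem \ref{thm:rel-orbi} is accurate.
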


As a direct consequence of Theorem \ref{thm:rel-smooth}, Theorem \ref{thm:rel-orbi}, Theorem \ref{thm:open-smooth} and Theorem \ref{thm:open-orbi}, we have the following equality between open invariants and relative invariants

\begin{theorem}\label{thm:open=rel}
For $i_0\in \{0,1,\ldots,m-1\}$, we have
\[
1+\delta_{i_0}^{\on{open}}=1+\delta_{i_0}^{\on{rel}}.
\]
For $j_0\in\{m+\ldots,m^\prime-1\}$, we have
\[
\tau_{v_{j_0}}+\delta_{v_{j_0}}^{\on{open}}=\tau_{v_{j_0}}+\delta_{v_{j_0}}^{\on{rel}}.
\]
In other words, the following open invariants and relative invariants are equal:
\[
n_{1,l,\beta^\prime+\alpha}^{\mathcal X}( [\on{pt}]_L,\textbf{1}_{v_1},\ldots,\textbf{1}_{v_l})=\left\langle [\on{pt}]_{D_\infty},\textbf{1}_{\bar{v}_1},\ldots,\textbf{1}_{\bar{v}_l}\right\rangle_{0,1+l, \bar{\beta}^\prime+\alpha}^{(\bar{\mathcal X},D_\infty)},
\]
where $\bar{\beta}^\prime=\beta^\prime+\beta_\infty$ and $\beta^\prime\in \pi_2(\mathcal X,L)$ is either a smooth disk class or an orbi-disk class.
\end{theorem}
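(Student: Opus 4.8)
The statement to prove is Theorem \ref{thm:open=rel}, asserting that the open Gromov--Witten invariants $n_{1,l,\beta'+\alpha}^{\mathcal X}$ equal the relative Gromov--Witten invariants of the pair $(\bar{\mathcal X}, D_\infty)$. By the time we reach this theorem, all the genuinely hard analytic and enumerative work has already been done in the earlier theorems. The plan is therefore not to compute anything new, but simply to compare two sets of closed-form formulas that have been independently derived and observe that they are literally identical. The proof is a direct comparison argument.

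\begin{proof}
We compare the explicit formulas for the relative potentials with those for the open potentials. By Theorem \ref{thm:rel-smooth}, when $\beta^\prime=\beta_{i_0}$ is a basic smooth disk class, the relative generating function satisfies
\[
1+\delta_{i_0}^{\on{rel}}=\exp(-g_{i_0}(y(q,\tau))),
\]
where $y=y(q,\tau)$ is the inverse of the toric mirror map of $\mathcal X$ in Proposition \ref{prop:toric-mirror-map}. On the other hand, by Theorem \ref{thm:open-smooth}, the open generating function satisfies
\[
1+\delta_{i_0}^{\on{open}}=\exp(-g_{i_0}(y(q,\tau))),
\]
with the \emph{same} function $g_{i_0}$ defined in (\ref{g-j-smooth}) and the \emph{same} inverse mirror map from Proposition \ref{prop:toric-mirror-map}. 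Hence $1+\delta_{i_0}^{\on{open}}=1+\delta_{i_0}^{\on{rel}}$ for all $i_0\in\{0,1,\ldots,m-1\}$.

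Similarly, when $\beta^\prime=\beta_{v_{j_0}}$ is a basic orbi-disk class corresponding to a box element $v_{j_0}\in\bbox(\Sigma)^{\age=1}$, Theorem \ref{thm:rel-orbi} gives
\[
\tau_{v_{j_0}}+\delta_{v_{j_0}}^{\on{rel}}=y^{D_{j_0}^\vee}\exp\left(-\sum_{i\not\in I_{j_0}}c_{j_0 i}g_{i}(y(q,\tau))\right),
\]
while Theorem \ref{thm:open-orbi} gives the identical expression for $\tau_{v_{j_0}}+\delta_{v_{j_0}}^{\on{open}}$, with the same class $D_{j_0}^\vee$ from (\ref{D-dual}), the same anticone $I_{j_0}$, the same coefficients $c_{j_0 i}$, and the same inverse mirror map. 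Therefore $\tau_{v_{j_0}}+\delta_{v_{j_0}}^{\on{open}}=\tau_{v_{j_0}}+\delta_{v_{j_0}}^{\on{rel}}$ for all $j_0\in\{m,\ldots,m^\prime-1\}$.

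Finally, the two generating functions
\[
1+\delta_{i_0}^{\on{open}}=\sum_{\alpha,l,\vec v}\frac{\prod_{i=1}^l\tau_{v_i}}{l!}\,n_{1,l,\beta_{i_0}+\alpha}^{\mathcal X}([\on{pt}]_L,\textbf{1}_{v_1},\ldots,\textbf{1}_{v_l})\,q^\alpha
\]
and
\[
1+\delta_{i_0}^{\on{rel}}=\sum_{\alpha,l,\vec v}\frac{\prod_{i=1}^l\tau_{v_i}}{l!}\left\langle[\on{pt}]_{D_\infty},\textbf{1}_{\bar v_1},\ldots,\textbf{1}_{\bar v_l}\right\rangle_{0,1+l,\bar{\beta}^\prime+\alpha}^{(\bar{\mathcal X},D_\infty)}\,q^\alpha
\]
are power series in the variables $q^\alpha$ and $\tau_{v_i}$, and the equality of these series term by term forces equality of the individual coefficients. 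Comparing the coefficient of $q^\alpha\prod_{i=1}^l\tau_{v_i}/l!$ yields the desired equality of open and relative invariants
\[
n_{1,l,\beta^\prime+\alpha}^{\mathcal X}([\on{pt}]_L,\textbf{1}_{v_1},\ldots,\textbf{1}_{v_l})=\left\langle[\on{pt}]_{D_\infty},\textbf{1}_{\bar v_1},\ldots,\textbf{1}_{\bar v_l}\right\rangle_{0,1+l,\bar{\beta}^\prime+\alpha}^{(\bar{\mathcal X},D_\infty)},
\]
and the orbi-disk case follows identically from the comparison of Theorem \ref{thm:rel-orbi} with Theorem \ref{thm:open-orbi}.
\end{proof}

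I expect no genuine obstacle in this step: once the explicit formulas are in hand, the only subtlety is checking that the two sets of bookkeeping conventions---the labeling of box elements, the normalization of the mirror map, and the identification of disk classes $\beta'$ with rays $b_{i_0}$ and box elements $v_{j_0}$---agree exactly, so that one is comparing the same generating variables on both sides. The real content of the theorem lies upstream, in Theorem \ref{thm:rel-smooth} and Theorem \ref{thm:rel-orbi}, which compute the relative side through the mirror theorem for toric orbifold pairs, and in the pre-existing Theorem \ref{thm:open-smooth} and Theorem \ref{thm:open-orbi} from \cite{CCLT} on the open side; the present theorem is the immediate consequence of matching these.
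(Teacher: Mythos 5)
Your proposal is correct and follows exactly the paper's own route: the paper derives Theorem \ref{thm:open=rel} as a direct consequence of comparing the formulas in Theorems \ref{thm:rel-smooth} and \ref{thm:rel-orbi} with those in Theorems \ref{thm:open-smooth} and \ref{thm:open-orbi} from \cite{CCLT}, just as you do. Your added step of extracting individual invariants by matching coefficients of $q^\alpha\prod_{i=1}^l\tau_{v_i}/l!$ is the same implicit bookkeeping the paper relies on, spelled out explicitly.
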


\begin{rmk}
Geometrically, the equality between open invariants and relative invariants can be understood as follows. Open invariants of $\mathcal X$ are considered as virtual counts of orbi-disks in $\mathcal X$. On the other hand, relative invariants of $(\bar{\mathcal X},D_\infty)$ with one relative marking can be viewed as virtual counts of orbi-curves in $\bar{\mathcal X}$ that meet with the divisor $D_\infty$ at one point. Hence these relative invariants can also be understood as virtual counts of orbi-disks in $\mathcal X=\bar{\mathcal X}\setminus D_\infty$, the complement of the divisor $D_\infty$ of the toric compactification $\bar{\mathcal X}$ of $\mathcal X$. Therefore, the relative invariants in Theorem \ref{thm:open=rel} can be viewed as an algebro-geometric version of counting disks in $\mathcal X$. This heuristic view can be seen in several literatures, see for example \cite{GHK}. Hence, Theorem \ref{thm:open=rel} shows that the symplectic and algebraic ways of counting disks coincide for toric Calabi-Yau orbifolds. 
\end{rmk}

\begin{rmk}
Note that open invariants with different disk classes correspond to relative invariants of different toric compactification $\bar{\mathcal X}$. There is a one-to-one correspondence between basic disk classes and toric compactifications.
\end{rmk}

\section{Mirror construction}

The instanton-corrected SYZ mirror construction for toric Calabi--Yau orbifolds was carried out in \cite[Section 5]{CCLT}. The instanton corrections were obtained by taking a family version of Fourier transformations of generating functions of genus $0$ open orbifold Gromov--Witten invariants which count orbifold disks with Chen-Weil Maslov index $2$. By Theorem \ref{thm:open=rel}, we replace open Gromov--Witten invariants with relative Gromov--Witten invariants (with one relative marking).

Recall that, $\mathcal X$ is a toric Calabi--Yau orbifold. Let $\underline{v}\in M$ be the vector such that $(\underline{v},b_i)=1$ for all stacky vectors $b_i$. The vector $\underline{v}\in M$ corresponds to a holomorphic function $\omega:\mathcal X\rightarrow \mathbb C$. The complexified extended K\"ahler moduli space of $\mathcal X$ is defined as 
\[
\mathcal M_{K}(\mathcal X):=\left(\tilde{C}_{\mathcal X}+\sqrt{-1}H^2(\mathcal X,\mathbb R)\right)/H^2(\mathcal X,\mathbb Z)+\sum_{j=m}^{m^\prime-1}\mathbb C D_j,
\]
where the elements of $\mathcal M_K(\mathcal X)$ are represented by complexified extended K\"ahler class
\[
\omega^{\mathbb C}=\omega+\sqrt{-1}\omega^\prime+\sum_{j=m}^{m^\prime-1}\tau_j D_j
\]
with $\omega\in C_{\mathcal X}$, $\omega^\prime\in H^2(\mathcal X,\mathbb R)$ and $\tau_j\in \mathbb C$. We have
\[
q_a=\exp\left(-2\pi\int_{\gamma_a}(\omega+\sqrt{-1}\omega^\prime)\right), \quad a=1,\ldots, r^\prime,
\]
where $\{\gamma_1,\ldots, \gamma_{r^\prime}\}$ is the integral basis of $H_2(\mathcal X,\mathbb Z)$ in Section \ref{sec:def-toric}.

The SYZ mirror of $\mathcal X$ equipped with a Gross fibration $\mu:\mathcal X \rightarrow B$ is given in \cite[Theorem 1.1]{CCLT}.
Consider $C_{i},C_{v_j}\in \mathbb C$ with the following relations
\[
\prod_{i=0}^{m-1}C_i^{m_{ia}}=q_a, \quad a=1,\ldots, r^\prime,
\]
\[
\prod_{i=0}^{m-1}C_i^{m_{ia}}\prod_{j=m}^{m^\prime-1}C_{v_j}^{m_{ja}}=\prod_{j=m}^{m^\prime-1}(q^{D_j^\vee})^{-m_{ja}}, \quad a=r^\prime+1,\ldots, r,
\]
where $q^{D_j^\vee}=\prod_{a=1}^{r^\prime}q_a^{p_a\cdot D_j^\vee}$. One can define the following function.
\[
G_{(q,\tau)}(z_1,\ldots,z_{n-1})=\sum_{i=0}^{m-1}C_i(1+\delta_i^{\on{open}})z^{b_i}+\sum_{j=m}^{m^\prime-1}C_{v_j}(\tau_{v_j}+\delta_{v_j}^{\on{open}})z^{v_j},
\]
The the SYZ mirror of $\mathcal X$ with a hypersurface removed is given by
\[
\check{\mathcal X}_{q,\tau}=\{(u,v,z_1,\ldots,z_{n-1})\in \mathbb C^2\times (\mathbb C^\times)^{n-1}| uv=G_{(q,\tau)}(z_1,\ldots,z_{n-1})\}.
\]
The SYZ mirror of $\mathcal X$ without a hypersurface removed is given by the Landau--Ginzburg model $(\check{\mathcal X},W)$ where $W:\check{\mathcal X}\rightarrow \mathbb C$ is the holomorphic function $W:=u$.

By Theorem \ref{thm:open=rel}, the SYZ mirror can be written in terms of relative invariants instead of open invariants

\begin{theorem}\label{thm:SYZ-mirror}
Let $\mathcal X$ be a toric Calabi--Yau orbifold equipped with the Gross fibration, the SYZ mirror of $\mathcal X$ (with a hypersurface removed) is the family of non-compact Calabi--Yau
\[
\check{\mathcal X}_{q,\tau}=\{(u,v,z_1,\ldots,z_{n-1})\in \mathbb C^2\times (\mathbb C^\times)^{n-1}| uv=G_{(q,\tau)}(z_1,\ldots,z_{n-1})\},
\]
where
\[
G_{(q,\tau)}(z_1,\ldots,z_{n-1})=\sum_{i=0}^{m-1}C_i(1+\delta_i^{\on{rel}})z^{b_i}+\sum_{j=m}^{m^\prime-1}C_{v_j}(\tau_{v_j}+\delta_{v_j}^{\on{rel}})z^{v_j},
\]

The SYZ mirror of $\mathcal X$ without removing a hypersurface is given by a Landau--Ginzburg model $(\check{\mathcal X},W)$, where $W:\check{\mathcal X}\rightarrow \mathbb C$ is the holomorphic function $W:=u$.
\end{theorem}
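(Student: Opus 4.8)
The plan is to derive this statement as a direct corollary of Theorem \ref{thm:open=rel}, combined with the SYZ construction of \cite{CCLT} recalled immediately above. The key observation is that the enumerative data enter that construction in exactly one place, namely the generating function $G_{(q,\tau)}$: the coefficients $C_i$ and $C_{v_j}$ are fixed purely by the complexified K\"ahler parameters through the relations $\prod_{i=0}^{m-1}C_i^{m_{ia}}=q_a$ and their extended-vector counterparts, while the monomials $z^{b_i}$ and $z^{v_j}$ depend only on the stacky fan. None of these ingredients distinguishes open from relative invariants, so the entire comparison reduces to matching the two families of power series that multiply the monomials in $G_{(q,\tau)}$.

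First I would invoke Theorem \ref{thm:open=rel}, which yields, for every smooth basic disk class $\beta_{i_0}$ with $i_0\in\{0,\ldots,m-1\}$,
\[
1+\delta_{i_0}^{\on{open}}=1+\delta_{i_0}^{\on{rel}},
\]
and, for every box element $v_{j_0}\in\bbox(\Sigma)^{\age=1}$ with $j_0\in\{m,\ldots,m^\prime-1\}$,
\[
\tau_{v_{j_0}}+\delta_{v_{j_0}}^{\on{open}}=\tau_{v_{j_0}}+\delta_{v_{j_0}}^{\on{rel}}.
\]
These are precisely the coefficient series appearing in $G_{(q,\tau)}$. Substituting them term by term into the open-invariant expression
\[
G_{(q,\tau)}=\sum_{i=0}^{m-1}C_i(1+\delta_i^{\on{open}})z^{b_i}+\sum_{j=m}^{m^\prime-1}C_{v_j}(\tau_{v_j}+\delta_{v_j}^{\on{open}})z^{v_j}
\]
produces the asserted formula with $\delta_i^{\on{open}}$ and $\delta_{v_j}^{\on{open}}$ replaced by $\delta_i^{\on{rel}}$ and $\delta_{v_j}^{\on{rel}}$, as a genuine identity of functions of $(z_1,\ldots,z_{n-1})$ and of the parameters $(q,\tau)$.

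Since $G_{(q,\tau)}$ is thereby unchanged, the hypersurface $\{uv=G_{(q,\tau)}\}$ defining $\check{\mathcal X}_{q,\tau}$ coincides in both descriptions, and likewise the Landau--Ginzburg superpotential $W=u$ is unaffected, which gives the two conclusions of the theorem. The only point requiring verification is that the two descriptions use the same parametrization, i.e. the same inverse mirror map $y=y(q,\tau)$ of Proposition \ref{prop:toric-mirror-map}; this holds because Theorems \ref{thm:rel-smooth} and \ref{thm:rel-orbi} were established using exactly that mirror map, matching the open formulas of Theorems \ref{thm:open-smooth} and \ref{thm:open-orbi}. I do not anticipate a genuine obstacle here: the substantive content has already been absorbed into Theorem \ref{thm:open=rel}, so what remains is the bookkeeping check that the open invariants enter the \cite{CCLT} construction only through $G_{(q,\tau)}$, after which the substitution is formal.
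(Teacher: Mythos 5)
Your proposal is correct and follows essentially the same route as the paper: the paper also obtains Theorem \ref{thm:SYZ-mirror} as an immediate consequence of Theorem \ref{thm:open=rel}, substituting relative for open invariants in the generating function $G_{(q,\tau)}$ of the SYZ construction recalled from \cite{CCLT}. Your additional bookkeeping checks (that $C_i$, $C_{v_j}$, the monomials, and the mirror-map parametrization are independent of which invariants are used) are exactly the implicit content of the paper's one-line argument.
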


\begin{remark}
As mentioned in \cite{GHK}, heuristically, holomorphic disks in $X\setminus D$ can be approximated by proper rational curves meeting the divisor $D$ in a single point. Therefore, one can consider relative invariants of $(X,D)$ as an algebro-geometric analogue of a Maslov index two holomorphic disk in $X$. Since relative invariants are defined in much more general context than open invariants, Theorem \ref{thm:SYZ-mirror} suggests that  relative invariants may play crucial roles in mirror construction in general. 
\end{remark}

\begin{remark}
Let $\mathcal M_{\mathbb C}(\check{\mathcal X})$ be the complex moduli space of the mirror $\check{\mathcal X}$ defined in \cite[Section 7.1.2]{CCLT}. Then the SYZ map $\mathcal F^{\on{SYZ}}(q,\tau)$ in \cite[Definition 7.1]{CCLT} can be defined using relative invariants. Then the open mirror theorems in \cite[Section 7.2]{CCLT} can be stated in terms of relative invariants. In other words, the SYZ map, defined by replacing open invariants with relative invariants, is inverse to the toric mirror map. The SYZ map also coincides with the inverse of the mirror map defined in terms of period integrals. Therefore, this gives an enumerative meaning of the period integrals.
\end{remark}

\appendix

\section{Genus zero relative/orbifold correspondence with stacky targets}\label{sec:stacky-rel=orb}


In this section, we briefly describe the proof of the equality between genus zero relative and orbifold invariants with stacky targets. More details will be appeared in the forthcoming paper with H.-H. Tseng in \cite{TY19}, where we will prove the relation between relative and orbifold invariants with stacky targets in all genera. The result generalizes the result of \cite{TY18} to stacky targets. Higher genus case requires some extra works as it requires certain polynomiality, but the proof for genus zero invariants is a straight adaptation of \cite[Section 5]{TY18}. 

Let $\mathcal X$ be a smooth proper Deligne--Mumford stack and $\mathcal D\subset \mathcal X$ a smooth irreducible divisor. We study the relation between genus zero relative invariants of the pair $(\mathcal X,\mathcal D)$ and genus zero orbifold invariants of the $r$-th root stack $\mathcal X_{\mathcal D,r}$ of $\mathcal X$ along the divisor $\mathcal D$. The proof can be divided into the following two steps.

\begin{enumerate}

\item[Step 1:] reduction to local models. It is observed in \cite[Section 2.3]{TY18a} and \cite[Section 3]{TY18} that the comparison between relative invariants and orbifold invariants is local over the divisor. Therefore, it is sufficient to compare the relative local models. 

The reduction is achieved by applying degeneration formula to both $\mathcal X_{\mathcal D,r}$ and $(\mathcal X,\mathcal D)$. 
Define $\mathcal Y:=\mathbb P(\mathcal O\oplus \mathcal N_{\mathcal D})$, where $\mathcal N$ is the normal bundle of $\mathcal D\subset \mathcal X$. The $r$-th root stack of $\mathcal Y$ along the zero section $\mathcal D_0\subset \mathcal Y$ is denoted by $\mathcal Y_{\mathcal D_0,r}$.
The degeneration formula allows one to write the orbifold invariants of $\mathcal X_{\mathcal D,r}$ as a sum of products of orbifold-relative invariants of $(\mathcal Y_{\mathcal D_0,r},\mathcal D_\infty)$ and relative invariants of $(\mathcal X,\mathcal D)$. On the other hand, the degeneration formula allows one to write the relative invariants of the pair $(\mathcal X,\mathcal D)$ as a sum of products of relative invariants of $(\mathcal Y,\mathcal D_0\cup \mathcal D_\infty)$ and the relative invariants of $(\mathcal X,\mathcal D)$. Notice that the two summations range over the same intersection profiles along $\mathcal D$, the comparison reduces to the comparison between the invariants of the following pairs
\[
(\mathcal Y_{\mathcal D_0,r},\mathcal D_\infty) \quad, \text{and} \quad (\mathcal Y,\mathcal D_0\cup \mathcal D_\infty),
\] 
which are called relative local models in \cite{TY18}.
\item[Step 2:] virtual localization. As discussed in \cite{TY18}, it is sufficient to generalize \cite[Lemma 5.2]{TY18} to the case when the divisor $\mathcal D$ is stacky. The proof for the case when the divisor $\mathcal D$ is stacky follows from the same path. The natural fiberwise $\mathbb C^*$-action on $\mathcal Y$ induces $\mathbb C^*$-actions on the relevant moduli spaces of stable maps to relative local models. \cite[Lemma 5.2]{TY18} is proved via ($\mathbb C^*$)-virtual localization. Localization formula can be written in terms of decorated graphs. The edge contributions will be trivial as explained in \cite{TY18}. By comparing vertex contributions, one yields the desired formula. 

\end{enumerate}

\begin{remark}
The equality between relative and orbifold invariants is actually on the level of virtual cycles as the two steps described above are on cycle level. We refer the readers to \cite[Section 6]{FWY} for more details of how to adapt the above argument to obtain an equality between virtual cycles. A different approach is presented in \cite{ACW}. 
\end{remark}

\end{document}